\documentclass[10pt,a4paper]{article}
\usepackage{latexsym}
\usepackage{amssymb}
\usepackage{bm}
\usepackage{amsmath}
\usepackage{mathtools}
\usepackage{graphicx}
\usepackage{epstopdf}
\usepackage[dvips]{epsfig}
\usepackage{wrapfig}
\usepackage{tikz}
\usepackage{amsthm}
\usepackage{float}
\usepackage{caption}
\usepackage{subcaption}
\usepackage{morefloats}
\usepackage{stmaryrd}
\usepackage{hyperref}
\usepackage{url}
\hypersetup{colorlinks=true,linkcolor=blue,citecolor=blue,urlcolor=blue}
\usepackage{epstopdf}
\usepackage{xcolor}
\usepackage[top=2.5cm, left=2.0cm, right=2.0cm, bottom=2.5cm]{geometry}
\newtheorem{theorem} {\bf Theorem}
\newtheorem{lemma} {\bf Lemma}

\usepackage{booktabs}
\usepackage{appendix}
\allowdisplaybreaks
\begin{document}
\title{\bf Nitsche's method for the stationary Boussinesq system under mixed and nonlinear boundary conditions}

\author{
  A. Bansal\thanks{Department of Mathematics, Indian Institute of Technology Roorkee, Roorkee 247667, India. Email: \texttt{a\_bansal@ma.iitr.ac.in}.}
  \and N. A. Barnafi\thanks{Instituto de Ingeniería Matemática y Computacional \& Facultad de Ciencias Biológicas, Pontificia Universidad Católica de Chile, Avenida Vicuña Mackenna 4860, Santiago, Chile; and Centro de Modelamiento Matemático (CNRS IRL2807), Santiago, Chile. Email: \texttt{nicolas.barnafi@uc.cl}.}
  \and G. Sperone\thanks{Facultad de Matemáticas, Pontificia Universidad Católica de Chile, Avenida Vicuña Mackenna 4860, 7820436 Santiago, Chile. Email: \texttt{gianmarco.sperone@uc.cl}.}
  \and D. N. Pandey\thanks{Department of Mathematics, Indian Institute of Technology Roorkee, Roorkee 247667, India. Email: \texttt{dwijpfma@iitr.ac.in}.}
}
\date{}
\maketitle

\begin{abstract}
In this paper we analyze Nitsche’s method for the stationary Boussinesq system with Navier's slip and a nonlinear boundary condition. Our analysis of the formulation establishes the robustness of a finite elements scheme in arbitrarily complex boundaries. The well-posedness of the discrete problem is established using fixed-point theorems under a standard smallness assumption on the data. We also provide optimal convergence rates for the approximation error. Furthermore, the efficiency and reliability of residual-based a posteriori error estimators are established.  We validate our theory through several numerical tests.
\end{abstract}

\vspace{0.5cm}

 \noindent\textbf{Keywords:} Boussinesq system, Navier boundary conditions, Nitsche's method, a priori/a posteriori analysis.

 \vspace{0.5cm}

\noindent\textbf{Mathematics Subject Classification:} 65N30 · 65N12 · 65N15 · 65J15 · 76D05
\section{Introduction}
Non-isothermal flows describe fluid motion in which the temperature varies within the domain. Such flows arise naturally in many applications and are of particular importance in desalination processes, for example in sweeping gas membrane distillation \cite{Perfilov2018}. The mathematical description of non-isothermal flows is based on the Boussinesq approximation. In this framework, the model consists of the Navier–Stokes equations, which govern the fluid velocity and pressure, coupled with an advection–diffusion equation for the temperature. The coupling is introduced through a buoyancy force in the momentum equation and through convective heat transport in the temperature equation. Owing to the practical relevance and mathematical complexity of this coupled system, a wide range of numerical methods for its approximation have been developed in the literature  \cite{MR3649317, MR2947652, MR3913622, MR3918673, MR2111747, MR4489621}. Traditionally, it is assumed that the fluid adheres to the walls of its container, a condition known as the \textit{no-slip boundary condition}. However, the validity of this assumption has been widely debated ~\cite{gerard2015influence,goldstein1938modern}. Well-known boundary conditions in PDE and numerical analysis such as periodic, natural, and no-slip conditions, are insufficient for a variety of practical applications. In particular, physical phenomena such as inkjet printing~\cite{MR3179782}, pipe flow~\cite{berg2008two}, complex turbulent flows~\cite{MR1753115}, and slide coating~\cite{christodoulou1989fluid}, are well known for having non zero tangential velocity. This is captured through the \textit{Navier boundary conditions} \cite{berselli2010some}. 

Navier-slip boundary conditions constrain only the normal component of the velocity, which makes their numerical enforcement challenging in non-trivial geometries. Early approaches imposed the Navier-slip conditions weakly using Lagrange multipliers~\cite{MR1707832,verfurth1986finite,MR1124131}, which are consistent at the cost of 
increased computational cost due to additional variables and inf–sup stability requirements. Penalty methods~\cite{MR351118,MR853660,MR768643} offered simpler implementation by enforcing the Navier-slip condition through regularization terms, but at the cost of asymptotic consistency and with stronger regularity assumptions on the solution. Both approaches have been extended to curved domains~\cite{MR2571324,MR3171814,MR3311462,MR3117433}, where geometric mismatches may give rise to the Babu{\v{s}}ka-type paradox. To better address such variational inconsistencies, Nitsche’s method~\cite{MR341903} provides a consistent, primal framework for imposing boundary conditions and has been widely adapted for Navier-slip conditions. Various symmetric, non-symmetric, and penalty-free variants have been systematically reviewed in~\cite{chouly2024review}, with particular emphasis on stabilized formulations and curved boundaries. Following the Nitsche-type approach of Juntunen \& Stenberg~\cite{MR2501054}, a dedicated treatment of Navier-slip boundary conditions was introduced in~\cite{MR3759094}. The literature on the Navier–Stokes (and Stokes) equations with Navier-slip boundary conditions is extensive. More recently, Gjerde \& Scott~\cite{MR4379970} propose a symmetric Nitsche formulation for the Navier–Stokes equations with a geometry-consistent discretization that avoids a Babuška-type paradox while preserving optimal convergence. Extensions to both symmetric and non-symmetric variants for the Stokes equations are further studied in~\cite{MR4744101}. Bansal et al.~\cite{MR4812237, MR4994515} study the Navier–Stokes equations using inf–sup stable and equal-order stabilized finite element methods, and this work is further extended to general dynamic boundary conditions in~\cite{GazcaOrozco2025Nitsche}. In adaptive finite element methods, \emph{a posteriori} error estimators provide a means to quantify the local distribution of discretization errors. A reliable estimator not only bound the true error but also serves as a stopping criterion in the adaptive refinement process. Moreover, the efficiency of such estimators ensures that their convergence rate matches that of the actual error. Regarding the design and rigorous analysis of residual-based a posteriori error estimators for flow-transport 
 couplings, the literature is predominantly focused on the stationary case (see, e.g., \cite{allendes2020stabilized, wilfrid2019posteriori, dib2019posteriori, alvarez2018posteriori, agroum2018posteriori, alvarez2016posteriori, zhang2011posteriori,MR2111747, becker2002solution} and the references therein).

This work presents both \emph{a priori} and \emph{a posteriori} error analyses for the stationary Boussinesq equations with Navier slip and nonlinear boundary conditions, employing Nitsche’s method in conjunction with an inf-sup stable finite elements. 

\paragraph{Paper structure: } In Section~\ref{section1}, we present the stationary Boussinesq equations with Navier slip boundary conditions for the velocity and nonlinear boundary conditions for the temperature, derive the weak formulation of the problem, and discuss the solvability analysis of the continuous case. In Section~\ref{section2}, we introduce the Nitsche scheme, derive the variational formulation, and establish the well-posedness of the linearized discrete problem using the Banach–Ne\v{c}as–Babu\v{s}ka theorem. This well-posedness result is extended to the Navier–Stokes equations using Banach’s fixed point theorem in a standard way in Section~\ref{section3}. In Section~\ref{section4}, we derive a priori estimates and prove the optimal convergence of the method. Section~\ref{section5} is devoted to the construction and analysis of the efficiency and reliability of a residual-based a posteriori error estimator tailored to the stationary problem. In Section~\ref{section6}, we present numerical tests to support our theoretical results.

\subsection*{Notations}
Throughout this manuscript, we utilize the classical Sobolev spaces $L^2(\Omega)$ and $H^1(\Omega)$, equipped with their respective norms $\|\cdot\|_{L^2(\Omega)}$ and $\|\cdot\|_{H^1(\Omega)}$. The $L^2$-inner product is denoted by $(\cdot, \cdot)$, and, for any arbitrary Hilbert space $H$, the duality pairing with its dual space $H'$ is represented by $\langle\cdot, \cdot\rangle_{H', H}$. We follow the convention of denoting scalars, vectors, and tensors by $a$, $\boldsymbol{a}$, and $\mathbb{A}$, respectively. 

For the sake of simplicity, throughout the analysis, we use $C$ to denote a generic positive constant that is independent of the mesh size $h$ but may depend on the model parameters. We also adopt a slight abuse of notation by using $\delta_i, \gamma_j > 0$, with $i,j \in \mathbb{N}$, to represent arbitrary positive constants that may take different values at different occurrences, typically arising from the application of Young's inequality. Moreover, whenever an inequality involves positive constants that are independent of the mesh size but may depend on model parameters, we use the symbols $\lesssim$ or $\gtrsim$ to omit explicit constants. The assumption of homogeneity in the boundary conditions is made to simplify the subsequent analysis, as lifting operators have already been established~\cite{MR3974685}. Non-homogeneous boundary conditions are nevertheless used in the numerical tests in Section~\ref{section6}.

\section{Model Problem}\label{section1}
Let $\Omega \subset \mathbb{R}^d$, $d \in \{2,3\}$, be a bounded domain having a Lipschitz boundary $\partial \Omega$. We assume that $\partial \Omega$ can be decomposed as $\partial \Omega = \Gamma_{I} \cup \Gamma_{W} \cup \Gamma_{O}$, where the intersection between each of the boundary components are sets of null 3D-measure. Roughly speaking, $\Gamma_{I}$ and $\Gamma_{O}$ represent the inlet and outlet sections, respectively, of the container $\Omega$, while $\Gamma_{W}$ includes all the \textit{physical walls} of $\partial \Omega$ (namely, lateral walls of $\Omega$ and the surface of obstacles strictly contained inside $\Omega$).

In the present article we analyze the steady motion of a viscous, incompressible and heat-conductive fluid across the domain $\Omega$. Following \cite{acevedo2016boussinesq,acevedo2019p,arndt2020existence,ceretani2019boussinesq,kravcmarnon}, such motion will be described by the velocity field of the fluid $\boldsymbol{u} : \Omega \longrightarrow \mathbb{R}^d$, its scalar pressure $p : \Omega \longrightarrow \mathbb{R}$ and temperature $\theta : \Omega \longrightarrow \mathbb{R}$, in the presence of an external body force $\boldsymbol{f} : \Omega \longrightarrow \mathbb{R}^d$ and external sources of heat $g : \Omega \longrightarrow \mathbb{R}$; these functions are assumed to satisfy the stationary Boussinesq equations in $\Omega$, that is, the following coupled system of partial differential equations:
\begin{equation}\label{nsstokes0}
\left\{
\begin{aligned}
& -\nu\Delta \boldsymbol{u}+(\boldsymbol{u}\cdot\nabla)\boldsymbol{u}+\nabla p= \alpha \theta \boldsymbol{f} \, , \quad  \nabla\cdot \boldsymbol{u}=0 \, , \quad -\kappa \Delta \theta +  \boldsymbol{u} \cdot \nabla \theta =g \ \ \mbox{ in } \ \ \Omega \,, \\[3pt]
& \boldsymbol{u}= \boldsymbol{u}_{\star} \ \ \ \mbox{ on } \ \ \Gamma_{I} \,, \qquad \theta = \theta_{\star} \ \ \mbox{ on } \ \ \Gamma_{I} \, , \\[3pt]
& \boldsymbol{u} \cdot \boldsymbol{n} =0 \,, \quad \left[ \mathbb{T}(\boldsymbol{u},p) \boldsymbol{n} \right]_{\tau} + \gamma \boldsymbol{u} = 0 \,, \quad \kappa \dfrac{\partial \theta}{\partial \boldsymbol{n}} + \beta \theta = 0  \ \ \mbox{ on } \ \ \Gamma_{W} \,,   \\[3pt]
& \mathbb{T}(\boldsymbol{u},p) \boldsymbol{n} = \boldsymbol{0} \,, \quad \kappa \dfrac{\partial \theta}{\partial \boldsymbol{n}} = (\boldsymbol{u} \cdot \boldsymbol{n}) \, \theta \, \psi(\boldsymbol{u} \cdot \boldsymbol{n}) \ \ \mbox{ on } \ \ \Gamma_{O} \,.
\end{aligned}
\right.
\end{equation}
\noindent
In \eqref{nsstokes0}$_1$, $\nu>0$ is the (constant) kinematic viscosity of the fluid, while $\alpha > 0$ and $\kappa > 0$ denote, respectively, the coefficients of thermal expansion and conductivity of the fluid. In \eqref{nsstokes0}$_2$, the functions $\boldsymbol{u}_{\star} : \Gamma_{I} \longrightarrow \mathbb{R}^d$ and $\theta_{\star} : \Gamma_{I} \longrightarrow \mathbb{R}$ describe, respectively, the inlet velocity of the fluid and its temperature on $\Gamma_{I}$. In \eqref{nsstokes0}$_{3}$, $\beta > 0$ denotes the diffusivity constant and $\mathbb{T}(\boldsymbol{u},p)$ denotes the stress tensor of the fluid, defined as
$$
\mathbb{T}(\boldsymbol{u},p) = -p\mathbb{I}_{d} + 2 \nu \varepsilon(\boldsymbol{u})  \quad \text{in} \quad \Omega \qquad \text{with} \quad \varepsilon(\boldsymbol{u}) \doteq \dfrac{1}{2}[\nabla \boldsymbol{u} + (\nabla \boldsymbol{u})^{\intercal}] \, ,
$$
where $\mathbb{I}_{3}$ is the $d \times d$-identity matrix. Then, $\left[ \mathbb{T}(\boldsymbol{u},p) \boldsymbol{n} \right]_{\tau}$ is the tangential component of the force with which the fluid acts on the boundary of $\Omega$, so that
\begin{equation} \label{tangential}
\left[ \mathbb{T}(\boldsymbol{u},p) \boldsymbol{n} \right]_{\tau} = \mathbb{T}(\boldsymbol{u},p) \boldsymbol{n} - \left[\mathbb{T}(\boldsymbol{u},p) \boldsymbol{n} \cdot \boldsymbol{n} \right] \boldsymbol{n} = \sum_{i=1}^{d-1}
\left( \mathbb{T}(\boldsymbol{u},p)\boldsymbol{n} \cdot \boldsymbol{\tau}^i \right)\boldsymbol{\tau}^i \, ,
\end{equation}
while $\gamma \geq 0$ is the (constant) coefficient of friction between the fluid and $\partial \Omega$ and $\boldsymbol{\tau}^i$, $i \in \{1, \ldots , d-1 \}$, represent the unit tangential vectors to the boundary $\partial \Omega$. The first two identities in \eqref{nsstokes0}$_{3}$ impose the \textit{Navier-slip boundary conditions} (firstly introduced C. L. Navier in 1823, see \cite{navier1823memoire} and the survey article by Berselli \cite{berselli2010some}), expressing both the impermeability of $\Gamma_{W}$ and the requirement that the tangential component of the force with which the fluid acts on $\Gamma_{W}$ be proportional to the tangential velocity. Then, the third equation in \eqref{nsstokes0}$_{3}$ dictates that $\Gamma_{W}$ is a thermally convective part of $\partial \Omega$, while the first equality in \eqref{nsstokes0}$_{4}$ prescribes a \textit{do-nothing} or \textit{constant traction} boundary condition for the velocity field on the outlet $\Gamma_{O}$. Finally, for a given Lipschitz-continuous function $\psi : \mathbb{R} \longrightarrow \mathbb{R}$, the second equality in \eqref{nsstokes0}$_{4}$, denotes an appropriate switching function, corresponding to the artificial boundary condition introduced in~\cite{ceretani2019boussinesq}. This condition is designed to model the effective thermal behavior at the outlet when the computational domain represents only a truncated portion of a larger physical system. In particular, it mimics the balance between convective heat transport and conductive flux across $\Gamma_{O}$, ensuring that heat is transported outward with the flow while preventing non-physical backflow effects. Such a formulation provides a mathematically consistent and energetically stable closure that captures the dominant thermal-conduction mechanisms occurring at the open boundary without explicitly resolving the exterior domain.  Throughout the paper, it is assumed that $\boldsymbol{u} _{\star} \in \boldsymbol{H}^{1/2}(\Gamma_{I})$,  $\theta _{\star} \in H^{1/2}(\Gamma_{I})$, $\boldsymbol{f} \in \boldsymbol{L}^2(\Omega)$, $g \in L^{2}(\Omega)$ and $\psi \in L^{\infty}(\mathbb{R})$ are given data.

Define the sets of functions:
\begin{align*}
\mathrm{V} & \coloneqq \{ \boldsymbol{u} \in \boldsymbol{H}^1(\Omega): \boldsymbol{u} \cdot \boldsymbol{n} = 0 \text{ on } \Gamma_W, \ \ \boldsymbol{u} = \boldsymbol{u} _{\star} \text{ on } \Gamma_I \},  \\
     \mathrm{V}_{0} & \coloneqq \{ \boldsymbol{u} \in \boldsymbol{H}^1(\Omega): \boldsymbol{u} \cdot \boldsymbol{n} = 0 \text{ on } \Gamma_W, \ \ \boldsymbol{u} = \boldsymbol{0} \text{ on } \Gamma_I \},  \\
    \mathrm{M} & \coloneqq \{ \theta \in H^1(\Omega): \theta = \theta_{\star} \text{ on } \Gamma_I \}, \\
    \mathrm{M}_{0} & \coloneqq \{ \theta \in H^1(\Omega): \theta = 0 \text{ on } \Gamma_I \}, \\
    \Pi & \coloneqq L^2(\Omega).
\end{align*}
The standard weak formulation of \eqref{nsstokes0} reads: Find $(\boldsymbol{u},p,\theta) \in \mathrm{V} \times \Pi \times \mathrm{M}$, such that
\begin{align}\label{bb}
	\mathcal{A}\left[\left(\boldsymbol{u}, p, \theta \right) ;\left(\boldsymbol {v}, q, \varphi \right)\right]=\mathcal{F}(\boldsymbol{v}, q, \varphi )  \qquad \forall  \left(\boldsymbol{v}, q, \varphi  \right) \in \mathrm{V}_{0} \times \Pi \times \mathrm{M}_{0},
	\end{align}
	where
 \begin{align*}
\mathcal{A}\!\left[\left(\boldsymbol{u}, p, \theta \right);
\left(\boldsymbol{v}, q, \varphi \right)\right]
&\coloneqq
2\nu \bigl(\varepsilon(\boldsymbol{u}), \varepsilon(\boldsymbol{v})\bigr)
+ (\boldsymbol{u} \cdot \nabla \boldsymbol{u}, \boldsymbol{v})- (p, \nabla \cdot \boldsymbol{v})- (q, \nabla \cdot \boldsymbol{u}) - \alpha (\theta \boldsymbol{f}, \boldsymbol{v}) + \kappa (\nabla \theta, \nabla \varphi) \\[3pt] & \quad + (\boldsymbol{u} \cdot \nabla \theta, \varphi) + \gamma \int\limits_{\Gamma_W} 
\sum_{i=1}^{d-1}\left(\boldsymbol{\tau}^i \cdot \boldsymbol{v}\right)\left(\boldsymbol{\tau}^i \cdot \boldsymbol{u}\right) \, ds + \beta \int\limits_{\Gamma_W} \theta  \, \varphi \, ds  -  \int\limits_{\Gamma_O} (\boldsymbol{u} \cdot \boldsymbol{n}) \, \theta \, \psi(\boldsymbol{u} \cdot \boldsymbol{n}) \,
\varphi \, ds, \\
\mathcal{F}(\boldsymbol{v}, q, \varphi)
&\coloneqq
\int_{\Omega} g \, \varphi \, dx .
\end{align*}
Given that the primary objective of this study is to analyze the discrete formulation of \eqref{bb} using Nitsche’s method, we do not present the continuous analysis, which can be carried out similarly to \cite{MR4812237} using fixed-point arguments assuming that the nonlinearity $\psi$ is Lipschitz-continuous and bounded.

  \section{Discrete Problem}\label{section2}
  \label{sec3}
  This section studies the solvability and convergence analysis of Nitsche's scheme for the problem. We assume that the polygonal computational domain ${\Omega}$ is discretized using a collection of regular partitions, denoted as $\{\mathcal{K}_h\}_{h>0}$, where $\Omega \subset \mathbb{R}^d$ is divided into simplices $K$ (triangles in 2D or tetrahedra in 3D) with a diameter $h_{{K}}$. The characteristic length of the finite element mesh $\mathcal{K}_h$ is denoted as $h \coloneqq \max _{{K} \in \mathcal{K}_h} h_{{K}}$. For a given triangulation $\mathcal{K}_h$, we define $\mathcal{E}_h$ as the set of all faces in $\mathcal{K}_h$, with the following partitioning
  		$$
  		\mathcal{E}_h \coloneqq \mathcal{E}_{\Omega} \cup \mathcal{E}_I \cup \mathcal{E}_O \cup \mathcal{E}_{W}, \text{ and } \mathcal{E}_{\Gamma} = \mathcal{E}_I \cup \mathcal{E}_O \cup \mathcal{E}_{W}, 
  		$$
  		where $\mathcal{E}_{\Omega}$ represents the faces lying in the interior of $\Omega$, $\mathcal{E}_{\Gamma}$ is the union of all boundary edges, and $\mathcal{E}_{W}, \mathcal{E}_{I}, \mathcal{E}_{O}$ represent the faces lying on their corresponding boundaries. Additionally, $h_e$ denotes the $(d-1)$ dimensional diameter of a face. Here \emph{faces} loosely refers to the geometrical entities of co-dimension 1. Finally, we introduce the finite element spaces. 
  		$$
  		\begin{aligned}
  		& \mathrm{V}_h \coloneqq \left\{\boldsymbol{v}_h \in \boldsymbol{C}(\overline{\Omega}) : \left.\boldsymbol{v}_h\right|_{K} \in \mathbb {P}_k({K}) \quad \forall {K} \in \mathcal{K}_h\right\}, \\
  		& \mathrm{\Pi}_h \coloneqq \left\{q_h \in \mathrm{C}(\overline{\Omega}) :\left.q_h\right|_{{K}} \in \mathbb {P}_{k-1}({K}) \quad \forall {K} \in \mathcal{K}_h\right\} \cap L^2(\Omega), 
        \\
        & \mathrm{M}_{h} \coloneqq \left\{\theta_h \in {C}(\overline{\Omega}) : \left.\theta_h\right|_{K} \in \mathbb {P}_k({K}) \quad \forall {K} \in \mathcal{K}_h\right\},
  		\end{aligned}
  		$$
  		where $\mathbb{P}_k(K)$ is the space of polynomials of degree $k$ defined on $K$. This is the classical Taylor–Hood finite elements for the velocity and pressure, which are inf–sup stable. The degree $k$ for the temperature has been chosen in order to have a consistent order of convergence among all fields.

  		\subsection{Nitsche's Method}
 The main objective of Nitsche's method is to impose boundary conditions weakly so that they hold only asymptotically. In this work, this approach is applied to the Navier-slip boundary condition on $\Gamma_W$ and the Dirichlet boundary conditions on $\Gamma_I$. As a result, the weak formulation with the symmetric variant of the Nitsche method can be expressed as follows:
  		Find $\left(\boldsymbol{u}_h, p_h, \theta_h \right) \in \mathrm{V}_h \times \Pi_h \times \mathrm{M}_h$, such that
  		\begin{align}\label{41}
  		\mathcal{A}_h\left[\left(\boldsymbol{u}_h, p_h, \theta_h \right) ;\left(\boldsymbol{v}_h, q_h,\varphi_h\right)\right]=\mathcal{F}(\boldsymbol{v}_h, q_h,\varphi_h)  \quad \forall  \left(\boldsymbol{v}_h, q_h,\varphi_h \right) \in \mathrm{V}_h \times \Pi_h \times \mathrm{M}_{h},
  		\end{align}
  where the forms are defined as 
  		$$
  		\begin{aligned}
  		\mathcal{A}_h\left[(\boldsymbol{u}_h, p_h, \theta_h);(\boldsymbol{v}_h, q_h, \varphi_h)\right]  &\coloneqq \mathcal{A}\!\left[\left(\boldsymbol{u}_h, p_h , \theta_h \right);
\left(\boldsymbol{v}_h, q_h, \varphi_h \right)\right]  +
   		\sum_{E\in \mathcal{E}_W}\bigg(-\int_{E} \boldsymbol{n}^t(2 \nu \varepsilon(\boldsymbol{u}_h)-p_h I) \boldsymbol{n}(\boldsymbol{n} \cdot \boldsymbol{v}_h) d s \\& -\int_{E} \boldsymbol{n}^t(2 \nu \varepsilon(\boldsymbol{v}_h)-q_h I) \boldsymbol{n}(\boldsymbol{n} \cdot \boldsymbol{u}_h) d s   +
  		\gamma_N \int_{E} {h_e}^{-1}(\boldsymbol{u}_h \cdot \boldsymbol{n})(\boldsymbol{v}_h \cdot \boldsymbol{n}) d s \bigg) \\&   - 
   		\sum_{E\in \mathcal{E}_O} 
   		\sum_{E\in \mathcal{E}_I}\bigg(-\int_{E} 2 \nu \varepsilon(\boldsymbol{u}_h) \boldsymbol{n} \cdot  \boldsymbol{v}_h d s   -\int_{E} 2 \nu \varepsilon(\boldsymbol{v}_h) \boldsymbol{n} \cdot \boldsymbol{u}_h d s 
		+ \int_{E} p_h (\boldsymbol{n} \cdot \boldsymbol{v}_h) d s \\& + \int_{E} q_h (\boldsymbol{n} \cdot \boldsymbol{u}_h) d s 
       -\int_{E} \kappa   (\nabla \theta_h \cdot \boldsymbol{n}) \varphi_h d s -\int_{E} \kappa   (\nabla \varphi_h \cdot \boldsymbol{n}) \theta_h d s  \\ &  + \gamma_N \int_{E} {h_e}^{-1}(\boldsymbol{u}_h \cdot \boldsymbol{v}_h) d s    +\gamma_N \int_{E} {h_e}^{-1}(\theta_h \varphi_h)d s \bigg), \\
        \mathcal{F}_h(\boldsymbol{v}_h, q_h, \varphi_h) &\coloneqq \int_{\Omega} g \varphi~ dx + \sum_{E\in \mathcal{E}_I}\bigg(-\int_{E} 2 \nu \varepsilon(\boldsymbol{v}_h) \cdot \boldsymbol{u}_{\star} d s + \int_{E} q_h (\boldsymbol{n} \cdot \boldsymbol{u}_{\star}) d s  \\&   + \gamma_N \int_{E} {h_e}^{-1}(\boldsymbol{u}_{\star} \cdot \boldsymbol{v}_h) d -\int_{E} \kappa   (\nabla \varphi_h \cdot \boldsymbol{n}) \theta_{\star} d s +\gamma_N \int_{E} {h_e}^{-1}(\theta_{\star} \varphi_h)d s \bigg),
  		\end{aligned}
  		$$
  	and $\gamma_N>0$ is a positive constant that needs to be chosen sufficiently large, as proved in Lemma~\ref{S} later. We can rewrite \eqref{41} as:
  	    Find $(\boldsymbol{u}_h,p_h,\theta_h) \in \mathrm{V}_h \times \mathrm{\Pi}_h \times \mathrm{M}_h$, such that
  	    \begin{equation}\label{E}
  	    \begin{aligned}
  	    \begin{array}{rlrl}
  	    {A}_S^h(\boldsymbol{u}_h, \boldsymbol{v}_h) + {O}_S^h(\boldsymbol{u}_h ; \boldsymbol{u}_h, \boldsymbol{v}_h) + {B}^h(\boldsymbol{v}_h,p_h) - D(\theta_h,\boldsymbol{v}_h)& = F_{v}(\boldsymbol{v}_h) & \forall \boldsymbol{v}_h \in \mathrm{V}_h, \\
  	    {B}^h(\boldsymbol{u}_h, q_h) & = F_q(q_h) & \forall q_h \in \mathrm{\Pi}_h, \\ 
       {A}^h_T(\theta_h,\varphi_h) + O_T(\boldsymbol{u}_h; \theta_h, \varphi_h) & = F_{\theta}(\varphi_h) & \forall \varphi_h \in \mathrm{M}_{h},
  	    \end{array}
  	    \end{aligned}
  	    \end{equation} 
 where 
  	    \begin{align}\label{F}
  	    A_S^h(\boldsymbol{u}_h, \boldsymbol{v}_h) & \coloneqq  \sum_{K\in \mathcal{K}_{h}} 2 \nu (\varepsilon(\boldsymbol{u}_h), \varepsilon(\boldsymbol{v}_h))_{K} + \sum_{E\in \mathcal{E}_W}\bigg(-\int_{E} 2 \nu \varepsilon(\boldsymbol{u}_h) \boldsymbol{n} \cdot \boldsymbol{n} (\boldsymbol{n} \cdot \boldsymbol{v}_h) d s-\int_{E} 2 \nu \varepsilon(\boldsymbol{v}_h) \boldsymbol{n} \cdot \boldsymbol{n} (\boldsymbol{n} \cdot \boldsymbol{u}_h) d s  \nonumber\\ & + \int_{E} \gamma \sum_{i=1}^{d-1}\left(\boldsymbol{\tau}^i \cdot \boldsymbol{v}_h \right)\left(\boldsymbol{\tau}^i \cdot \boldsymbol{u}_h \right) d s 
  		+\gamma_N \int_{E} {h_e}^{-1}(\boldsymbol{u}_h \cdot \boldsymbol{n})(\boldsymbol{v}_h \cdot \boldsymbol{n}) d s \bigg) - 
   		\sum_{E\in \mathcal{E}_I}\bigg(\int_{E} 2 \nu \varepsilon(\boldsymbol{u}_h) \boldsymbol{n} \cdot  \boldsymbol{v}_h d s
       \nonumber  \\ & +\int_{E} 2 \nu \varepsilon(\boldsymbol{v}_h) \boldsymbol{n} \cdot \boldsymbol{u}_h d s - \gamma_N \int_{E} {h_e}^{-1}(\boldsymbol{u}_h \cdot \boldsymbol{v}_h) d s  \bigg)  \nonumber \\
  	    B^h(\boldsymbol{u}_h, q_h) &\coloneqq \sum_{K\in \mathcal{K}_{h}} -(q_h, \nabla \cdot \boldsymbol{u}_h)_{K} + \sum_{E\in \mathcal{E}_W \cup \mathcal{E}_I} \int_{E}q_h(\boldsymbol{n} \cdot \boldsymbol{u}_h) d s,  \nonumber \\
        {O}_S^h(\boldsymbol{u}_h ; \boldsymbol{u}_h, \boldsymbol{v}_h) & \coloneqq \sum_{K\in \mathcal{K}_{h}} (\boldsymbol{u}_h \cdot \nabla \boldsymbol{u}_h, \boldsymbol{v}_h)_{K} ,  \nonumber \\
        D(\theta_h,\boldsymbol{v}_h) & \coloneqq  \sum_{K\in \mathcal{K}_{h}} (\alpha \theta_h \boldsymbol{f}, \boldsymbol{v}_h)_K ,  \nonumber \\ 
        {A}^h_T(\theta_h,\varphi_h) & \coloneqq \sum_{K\in \mathcal{K}_{h}} (\kappa \nabla \theta_h, \nabla  \varphi_h )_K 
   		 + \sum_{E \in \mathcal{E}_W} \int_E \beta \theta_h \varphi_h ds  +
   		\sum_{E\in \mathcal{E}_I}\bigg( -\int_{E} \kappa (\nabla \theta_h \cdot \boldsymbol{n}) \varphi_h d s \nonumber \\& \quad -\int_{E} \kappa(\nabla \varphi_h \cdot \boldsymbol{n}) \theta_h d s  +\gamma_N \int_{E} {h_e}^{-1}(\theta_h \varphi_h)d s \bigg) , \\ 
        O_T^h(\boldsymbol{u}_h; \theta_h, \varphi_h) & \coloneqq \sum_{K\in \mathcal{K}_{h}} (\boldsymbol{u}_h \cdot \nabla \theta_h , \varphi_h)_{K} -\sum_{E\in \mathcal{E}_O} \int_E (\boldsymbol{u}_h \cdot \boldsymbol{n}) \theta_h  \psi(\boldsymbol{u}_h \cdot \boldsymbol{n}) \varphi,  \nonumber \\
        F_v(\boldsymbol{v}_h) & \coloneqq \sum_{E\in \mathcal{E}_I}\bigg(-\int_{E} 2 \nu \varepsilon(\boldsymbol{v}_h) \cdot \boldsymbol{u}_{\star} d s + \gamma_N \int_{E} {h_e}^{-1}(\boldsymbol{u}_{\star} \cdot \boldsymbol{v}_h) d s  \bigg),
        \nonumber \\
        F_q(q_h) & \coloneqq  \sum_{E\in \mathcal{E}_I}\int_{E} q_h (\boldsymbol{n} \cdot \boldsymbol{u}_{\star}) d s,  
       \nonumber  \\
        F_{\theta}(\varphi_h) & \coloneqq \int_{\Omega} g \varphi_h ~ dx + \sum_{E\in \mathcal{E}_I}\bigg( -\int_{E} \kappa(\nabla \varphi_h \cdot \boldsymbol{n}) \theta_{\star} d s +\gamma_N \int_{E} {h_e}^{-1}(\theta_{\star} \varphi_h)d s \bigg).  \nonumber
  	    \end{align}

  	    We highlight that we have chosen a unified stabilization coefficient $\gamma_N$ for each of the Dirichlet boundary conditions to be imposed (slip, inlet-fluid, inlet-temperature). This is convenient only for the analysis, but preliminary numerical tests have shown only a mild sensitivity to having different parameters, so we have chosen the unified approach in practice as well for simplicity.
  		\subsubsection{Discrete stability properties}
 We need to define the energy norms as
\begin{align*}
\| \boldsymbol{v}_h \|_{1,h} &\coloneqq \| \varepsilon (\boldsymbol{v}_h) \|_{0,\Omega}^2 + \sum_{E \in \mathcal{E}_I} \frac{1}{h_e} \| \boldsymbol{v}_h  \|_{0,E}^2  + \sum_{E \in \mathcal{E}_W} \frac{1}{h_e} \| \boldsymbol{v}_h \cdot \boldsymbol{n} \|_{0,E}^2,  \\
\| \theta_h \|_{1,h} & \coloneqq \| \nabla \theta \|_{0,\Omega}^2 +  \sum_{E \in \mathcal{E}_I} \frac{1}{h_e} \| \theta_h \|_{0,E}^2, \\
\left|\!\left|\!\left| \boldsymbol{v}_h, \theta_h, q_h \right|\!\right|\!\right| & \coloneqq \| \varepsilon (\boldsymbol{v}_h) \|_{0,\Omega}^2 + \sum_{E \in \mathcal{E}_I} \frac{1}{h_e} \| \boldsymbol{v}_h  \|_{0,E}^2  + \sum_{E \in \mathcal{E}_W} \frac{1}{h_e} \| \boldsymbol{v}_h \cdot \boldsymbol{n} \|_{0,E}^2 +\| \nabla \theta \|_{0,\Omega}^2 +  \sum_{E \in \mathcal{E}_I} \frac{1}{h_e} \| \theta_h \|_{0,E}^2+  \|q\|_{0,\Omega}^2.
\end{align*} 

\newpage
  		\begin{theorem}\label{qa4}
  For a given function $\psi  \in L^{\infty}(\mathbb{R};\mathbb{R})$, there exist positive constants $C_1$, $C_2$, $C_3$, $C_4$, $C_5$, $C_6$, independent of $h$, such that
  		\begin{equation*}
  		\begin{aligned}
  		\left|{A}_S^h(\boldsymbol{u}_h, \boldsymbol{v}_h)\right| \leq& C_1 \|\boldsymbol{u}_h\|_{1, h}\|\boldsymbol{v}_h\|_{1,h}, &&  \forall \boldsymbol{u}_h, \boldsymbol{v}_h \in \mathrm{V}_h,
  		\\
  		\left|O_S^h\left(\boldsymbol{w}_h; \boldsymbol{u}_h, \boldsymbol{v}_h\right)\right| \leq& {C}_2 \left\|\boldsymbol{w}_h\right\|_{1,h}\|\boldsymbol{u}_h\|_{1,h}\|\boldsymbol{v}_h\|_{1, h}, && \forall \boldsymbol{w}_h, \boldsymbol{u}_h, \boldsymbol{v}_h \in \mathrm{V}_h,
  		\\
  		\left|{B}^h(\boldsymbol{v}_h, q_h)\right| \leq& C_3 \|\boldsymbol{v}_h\|_{1, h}\|q_h\|_{0, \Omega}, && \forall \boldsymbol{v}_h \in \mathrm{V}_h, q_h \in \Pi_h, \\ 
        \left| D( \theta_h, \boldsymbol{v}_h) \right|  \leq & C_4 \|\theta_h\|_{1,h}\|\boldsymbol{v}_h\|_{1,h} ,  && \forall \theta_h \in \mathrm{M}_h, \boldsymbol{v}_h \in \mathrm{V}_h, \\
        \left| A_T^h( \theta_h, \varphi_h)\right| \leq & C_5 \| \theta_h \|_{1,h} \| \varphi \|_{1,h} , && \forall  \theta_h, \varphi_h \in \mathrm{M}_h, \\
        \left| O_T^h(\boldsymbol{v}_h; \theta_h, \varphi_h) \right| \leq & C_6 \left\|\boldsymbol{v}_h\right\|_{1,h}\|\theta_h \|_{1,h}\|\varphi_h\|_{1,h} , && \forall \boldsymbol{v}_h \in \mathrm{V}_h, \theta_h, \varphi_h \in \mathrm{M}_h. 
  		\end{aligned}
  		\end{equation*}
        \end{theorem}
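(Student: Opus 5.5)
The six bounds are proved term by term with three standard tools. The first is the discrete trace/inverse inequality $h_e\|\nabla w_h\|_{0,E}^2 \lesssim \|\nabla w_h\|_{0,K_E}^2$ for polynomials on the element $K_E$ adjacent to a boundary face $E$; the same estimate holds for $q_h$ in $L^2$. The second is a discrete Korn inequality $\|\nabla \boldsymbol{v}_h\|_{0,\Omega}\lesssim \|\boldsymbol{v}_h\|_{1,h}$, valid because the $\Gamma_I$ penalty controls rigid motions. The third is a discrete Poincaré--Friedrichs inequality $\|\boldsymbol{v}_h\|_{1,\Omega}\lesssim\|\boldsymbol{v}_h\|_{1,h}$ and $\|\theta_h\|_{1,\Omega}\lesssim\|\theta_h\|_{1,h}$. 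These two inequalities are known for broken/Nitsche norms (Brenner's Korn inequality for piecewise $H^1$ fields) and are assumed here. Throughout, $\|\cdot\|_{1,h}$ is read as the square root of the displayed expression.

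\textbf{Linear forms.} For $A_S^h$, the volume term is bounded by Cauchy--Schwarz. A Nitsche consistency term such as $\int_E 2\nu\varepsilon(\boldsymbol{u}_h)\boldsymbol{n}\cdot\boldsymbol{n}(\boldsymbol{n}\cdot\boldsymbol{v}_h)$ is split as $h_e^{1/2}\|\varepsilon(\boldsymbol{u}_h)\|_{0,E}\, h_e^{-1/2}\|\boldsymbol{v}_h\cdot\boldsymbol{n}\|_{0,E}$, and the discrete trace inequality converts the first factor to $\|\varepsilon(\boldsymbol{u}_h)\|_{0,K_E}$. The $\Gamma_I$ terms are treated identically with full traces, and the penalty terms are immediate. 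The friction term $\gamma\int_{\Gamma_W}\boldsymbol{u}_{h,\tau}\cdot\boldsymbol{v}_{h,\tau}$ is not controlled by the $h$-weighted normal part. Instead, the continuous trace inequality $\|\boldsymbol{v}_h\|_{0,\partial\Omega}\lesssim\|\boldsymbol{v}_h\|_{1,\Omega}$ combined with Poincaré--Korn gives $\|\boldsymbol{v}_h\|_{1,h}$. For $B^h$, the volume part gives $\|q_h\|_0\|\nabla\cdot\boldsymbol{v}_h\|_0$, and $\|\nabla\cdot\boldsymbol{v}_h\|_0\lesssim\|\nabla \boldsymbol{v}_h\|_0$ is bounded via Korn. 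The boundary part uses $h_e^{1/2}\|q_h\|_{0,E}\lesssim\|q_h\|_{0,K_E}$ against $h_e^{-1/2}\|\boldsymbol{n}\cdot\boldsymbol{v}_h\|_{0,E}$. For $A_T^h$ the argument mirrors $A_S^h$: the Robin term $\beta\int_{\Gamma_W}\theta_h\varphi_h$ uses the continuous trace inequality plus Poincaré. For $D$, Hölder gives $\alpha\|\boldsymbol{f}\|_{0}\|\theta_h\|_{L^4}\|\boldsymbol{v}_h\|_{L^4}$, and the Sobolev embedding $H^1\hookrightarrow L^4$ ($d\le 3$) plus Poincaré finishes it, with $C_4$ depending on $\alpha\|\boldsymbol{f}\|_0$.

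\textbf{Trilinear forms.} For $O_S^h$, Hölder with exponents $(4,2,4)$ gives $\|\boldsymbol{w}_h\|_{L^4}\|\nabla\boldsymbol{u}_h\|_0\|\boldsymbol{v}_h\|_{L^4}$, and the same embedding and discrete Korn/Poincaré finish it. The volume part of $O_T^h$ is identical. For the outlet term $\int_{\Gamma_O}(\boldsymbol{v}_h\cdot\boldsymbol{n})\theta_h\psi(\cdot)\varphi_h$, I bound $|\psi|\le\|\psi\|_{L^\infty}$ and apply Hölder with three $L^3(\Gamma_O)$ factors. The trace embedding $H^1(\Omega)\hookrightarrow L^4(\partial\Omega)\subset L^3(\partial\Omega)$ holds for $d\le3$ (the trace lies in $H^{1/2}(\partial\Omega)\hookrightarrow L^4$ in dimension $d-1\le2$). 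Poincaré then gives $C_6\propto\|\psi\|_{L^\infty}$.

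\textbf{Main obstacle.} The critical step is the discrete Korn/Poincaré inequality $\|\boldsymbol{v}_h\|_{1,\Omega}\lesssim\|\boldsymbol{v}_h\|_{1,h}$ with a constant independent of $h$, since every $L^4$ and boundary bound routes through it. To prove it, I would use a compactness/contradiction argument on $H^1$ for the continuous seminorm $\|\varepsilon(\boldsymbol{v})\|_0+\|\boldsymbol{v}\|_{0,\Gamma_I}$, which is a norm on $\boldsymbol{H}^1$ by Korn's second inequality since rigid motions vanishing on $\Gamma_I$ are zero. Then I note $\|\boldsymbol{v}_h\|_{0,\Gamma_I}^2\le h\sum_{E\in\mathcal{E}_I}h_e^{-1}\|\boldsymbol{v}_h\|_{0,E}^2$ with $h$ bounded. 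The scalar case is identical, with the gradient replacing the symmetric gradient.
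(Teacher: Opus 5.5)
Your proposal is correct and takes the same route as the paper, whose proof simply invokes trace and inverse-trace inequalities, Cauchy--Schwarz, H\"older and Sobolev embeddings term by term. The one ingredient you make explicit, and which the paper leaves implicit, is the Korn--Friedrichs bound $\|\boldsymbol{v}_h\|_{1,\Omega}\lesssim\|\boldsymbol{v}_h\|_{1,h}$; this bound is genuinely needed for the friction, Robin, $L^4$ and outlet terms, and it holds because $\mathrm{V}_h\subset\boldsymbol{H}^1(\Omega)$ and $\Gamma_I$ has positive measure, so no broken Korn inequality is required.
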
  
  	\begin{proof}	
        The proof of the above inequalities is a direct consequence of the trace, inverse trace, Cauchy-Schwarz and H\"{o}lder inequalities, along with the usual Sobolev embedding theorems. 
  	\end{proof}
    \noindent The discrete kernel of ${B}_h$ is defined by:
  		$$
  		\mathrm{Z}_h \coloneqq \left\{\boldsymbol{v}_h \in \mathrm{V}_h: {B}_h\left( \boldsymbol{v}_h,p_h \right)=0 \ \ \ \forall {p}_h \in \mathrm{\Pi}_h\right\}.
  		$$
  		It can be equivalently written as
  		\begin{align}\label{a7}
  		\mathrm{Z}_h \coloneqq \left\{\boldsymbol{v}_h \in \mathrm{V}_h: \sum_{K\in \mathcal{K}_{h}} \int_{K} p_h \nabla \cdot \boldsymbol{v}_h d \boldsymbol{x} - \sum_{E\in \mathcal{E}_{\text{Nav}}} \int_{E} {p_h} (\boldsymbol{n} \cdot \boldsymbol{v}_h) ds =0 \quad \forall p_h \in \Pi_h \right\}.
  		\end{align}
  		The following lemma establishes the ellipticity of the above bilinear forms.
  		\begin{lemma}\label{S}
  			 There exist positive constants $\gamma_N, C_0$ and $C_S$, independent of $h$, such that
  			\begin{align}\label{G}
  			A^h_S(\boldsymbol{v}_h, \boldsymbol{v}_h) \geq C_S \|\boldsymbol{v}_h\|^2_{1,h} \quad \forall \boldsymbol{v}_h \in \mathrm{Z}_h,
  			\end{align}
  			where $C_S = \min \{\xi - {4 \nu  C_{5}^2}{ C_0},\gamma_N -\frac{2 \nu}{C_0}\}$ with $\gamma_N  > \frac{2 \nu}{C_0}$, $C_0 < \frac{\xi}{ 4 \nu C_{5}^2 }$ and $\xi = \min \{ 2 \nu, \gamma \}$.
  		\end{lemma}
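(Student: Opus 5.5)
Take $\boldsymbol{u}_h=\boldsymbol{v}_h$ in the definition of $A_S^h$ and collect the terms:
\begin{align*}
A_S^h(\boldsymbol{v}_h,\boldsymbol{v}_h) &= 2\nu\|\varepsilon(\boldsymbol{v}_h)\|_{0,\Omega}^2 + \gamma\sum_{E\in\mathcal{E}_W}\sum_{i=1}^{d-1}\|\boldsymbol{\tau}^i\cdot\boldsymbol{v}_h\|_{0,E}^2 + \gamma_N\sum_{E\in\mathcal{E}_W} h_e^{-1}\|\boldsymbol{v}_h\cdot\boldsymbol{n}\|_{0,E}^2 + \gamma_N\sum_{E\in\mathcal{E}_I} h_e^{-1}\|\boldsymbol{v}_h\|_{0,E}^2 \\
&\quad - 4\nu\sum_{E\in\mathcal{E}_W}\int_E \varepsilon(\boldsymbol{v}_h)\boldsymbol{n}\cdot\boldsymbol{n}\,(\boldsymbol{n}\cdot\boldsymbol{v}_h)\,ds - 4\nu\sum_{E\in\mathcal{E}_I}\int_E \varepsilon(\boldsymbol{v}_h)\boldsymbol{n}\cdot\boldsymbol{v}_h\,ds .
\end{align*}
The first four terms are nonnegative. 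The friction term can be dropped, or kept, since it only helps; this is where $\xi=\min\{2\nu,\gamma\}$ enters, as a lower bound on the coefficient of the volume and tangential parts. I do not expect the kernel restriction $\boldsymbol{v}_h\in\mathrm{Z}_h$ to be needed. The estimate should hold on all of $\mathrm{V}_h$, because the pressure does not appear in $A_S^h$.

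The work is in bounding the two symmetric Nitsche consistency terms. For each face $E\subset\partial K_E$ on $\mathcal{E}_W$ or $\mathcal{E}_I$, I apply Cauchy--Schwarz on $E$. Then I use the discrete inverse trace inequality: since $\varepsilon(\boldsymbol{v}_h)|_{K_E}$ is polynomial,
$$\|\varepsilon(\boldsymbol{v}_h)\boldsymbol{n}\|_{0,E}\le C_5\,h_e^{-1/2}\|\varepsilon(\boldsymbol{v}_h)\|_{0,K_E},$$
with $C_5$ depending only on $k$ and shape regularity. This gives
$$4\nu\Big|\int_E \varepsilon(\boldsymbol{v}_h)\boldsymbol{n}\cdot\boldsymbol{n}\,(\boldsymbol{n}\cdot\boldsymbol{v}_h)\,ds\Big| \le 4\nu C_5\|\varepsilon(\boldsymbol{v}_h)\|_{0,K_E}\,h_e^{-1/2}\|\boldsymbol{v}_h\cdot\boldsymbol{n}\|_{0,E}.$$
Young's inequality with weight $C_0$ then bounds this by
$$2\nu C_5^2 C_0\|\varepsilon(\boldsymbol{v}_h)\|_{0,K_E}^2 + \frac{2\nu}{C_0}h_e^{-1}\|\boldsymbol{v}_h\cdot\boldsymbol{n}\|_{0,E}^2 .$$
The inlet faces are handled the same way, with $\boldsymbol{v}_h$ in place of $\boldsymbol{v}_h\cdot\boldsymbol{n}$.

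Summing over faces, each element has at most a bounded number of boundary faces; if necessary this number is absorbed into $C_5$. The volume contributions from the wall and inlet faces together give at most $4\nu C_5^2C_0\|\varepsilon(\boldsymbol{v}_h)\|_{0,\Omega}^2$. Hence
$$A_S^h(\boldsymbol{v}_h,\boldsymbol{v}_h)\ge (\xi-4\nu C_5^2C_0)\|\varepsilon(\boldsymbol{v}_h)\|_{0,\Omega}^2 + \Big(\gamma_N-\frac{2\nu}{C_0}\Big)\Big(\sum_{\mathcal{E}_W}h_e^{-1}\|\boldsymbol{v}_h\cdot\boldsymbol{n}\|_{0,E}^2+\sum_{\mathcal{E}_I}h_e^{-1}\|\boldsymbol{v}_h\|_{0,E}^2\Big),$$
where $2\nu\ge\xi$ was used to replace the volume coefficient. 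The conditions $C_0<\xi/(4\nu C_5^2)$ and then $\gamma_N>2\nu/C_0$ make both coefficients positive. Taking their minimum gives $C_S$ and the claim with respect to $\|\cdot\|_{1,h}$, read as a squared norm.

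The main obstacle is the careful bookkeeping in the inverse trace step. One must ensure that an element touching several boundary faces, for instance both $\Gamma_W$ and $\Gamma_I$, does not spoil the factor $4\nu C_5^2C_0$; the constant $C_5$ must be understood to include that multiplicity. One must also check that the $h_e$ versus $h_K$ scaling is uniform under shape regularity.
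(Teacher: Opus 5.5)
Your proposal is correct and follows essentially the same route as the paper: test with $\boldsymbol{v}_h$, drop the nonnegative friction term via $\xi\le 2\nu$, bound the two symmetric Nitsche terms by Cauchy--Schwarz, the discrete inverse trace inequality and Young's inequality with weight $C_0$, and absorb the result into the volume and penalty terms, without ever using the kernel constraint. Your remarks that face multiplicity must be absorbed into $C_5$ and that $\|\cdot\|_{1,h}$ should be read as a squared norm are consistent with the paper's argument, and slightly more careful than its bookkeeping.
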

  		\begin{proof}
  		We use \eqref{F} to obtain 
  			\begin{align*}
  			A^h_S(\boldsymbol{v}_h, \boldsymbol{v}_h) &=\sum_{K\in \mathcal{K}_{h}} 2 \nu (\varepsilon(\boldsymbol{v}_h), \varepsilon(\boldsymbol{v}_h))_{K} + \sum_{E\in \mathcal{E}_W}\bigg(-\int_{E} 4 \nu \varepsilon(\boldsymbol{v}_h) \boldsymbol{n} \cdot \boldsymbol{n} (\boldsymbol{n} \cdot \boldsymbol{v}_h) d s  + \int_{E} \gamma \sum_{i=1}^{d-1}\left(\boldsymbol{\tau}^i \cdot \boldsymbol{v}_h \right)\left(\boldsymbol{\tau}^i \cdot \boldsymbol{v}_h\right) d s \\ &
  		+\gamma_N \int_{E} {h_e}^{-1}(\boldsymbol{v}_h \cdot \boldsymbol{n})(\boldsymbol{v}_h \cdot \boldsymbol{n}) d s \bigg) + 
   		\sum_{E\in \mathcal{E}_I}\bigg(-\int_{E} 4 \nu \varepsilon(\boldsymbol{v}_h) \boldsymbol{n} \cdot  \boldsymbol{v}_h d s + \gamma_N \int_{E} {h_e}^{-1}(\boldsymbol{v}_h \cdot \boldsymbol{v}_h) d s  \bigg).
  			\end{align*}
  			Using the trace, inverse trace, Cauchy-Schwarz and H\"{o}lder inequalities, the following estimate can be established 
  			\begin{align*}
  			A^h_S(\boldsymbol{v}_h, \boldsymbol{v}_h) &\geq \xi  \| \varepsilon( \boldsymbol{v}_h) \|^2_{0,\Omega} -4 \nu \sum_{E \in \mathcal{E}_W} \|\boldsymbol{v}_h \cdot \boldsymbol{n} \|_{0,E} \| \varepsilon  (\boldsymbol{v}_h) \boldsymbol{n} \|_{0,E} +  \frac{\gamma_N}{h_e} \sum_{E \in \mathcal{E}_W} \|\boldsymbol{v}_h \cdot \boldsymbol{n} \|_{0,E}^2  \\ & \quad -4 \nu \sum_{E \in \mathcal{E}_I} \|\boldsymbol{v}_h \|_{0,E} \| \varepsilon  (\boldsymbol{v}_h) \boldsymbol{n} \|_{0,E} +  \frac{\gamma_N}{h_e} \sum_{E \in \mathcal{E}_I} \|\boldsymbol{v}_h  \|_{0,E}^2 
  			 \\  & \hspace{-5mm} \geq \xi \| \varepsilon( \boldsymbol{v}_h) \|^2_{0,\Omega} - 4 \nu \sum_{E \in \mathcal{E}_W} h_e^{-1/2} \|\boldsymbol{v}_h \cdot \boldsymbol{n} \|_{0,E} h_e^{1/2} \| \varepsilon(\boldsymbol{v}_h) \boldsymbol{n} \|_{0,E} +  \frac{\gamma_N}{h_e} \sum_{E \in \mathcal{E}_W} \|\boldsymbol{v}_h \cdot \boldsymbol{n} \|_{0,E}^2 \\& \- 4 \nu \sum_{E \in \mathcal{E}_I} h_e^{-1/2} \|\boldsymbol{v}_h \|_{0,E} h_e^{1/2} \| \varepsilon(\boldsymbol{v}_h) \boldsymbol{n} \|_{0,E} +  \frac{\gamma_N}{h_e} \sum_{E \in \mathcal{E}_I} \|\boldsymbol{v}_h  \|_{0,E}^2  \\ & \hspace{-5mm}
  			\geq \xi \| \varepsilon( \boldsymbol{v}_h) \|^2_{0,\Omega} - 4 \nu  \sum_{E \in \mathcal{E}_W} \left( \frac{h_e C_0}{2} \| \varepsilon(\boldsymbol{v}_h)  \|_{0,E}^2   +   \frac{h_e^{-1}}{2 C_0}  \|\boldsymbol{v}_h \cdot \boldsymbol{n} \|_{0,E}^2 \right)+  \frac{\gamma_N}{h_e} \sum_{E \in \mathcal{E}_W} \|\boldsymbol{v}_h \cdot \boldsymbol{n} \|_{0,E}^2 \\ & - 4 \nu  \sum_{E \in \mathcal{E}_I} \left( \frac{h_e C_0}{2} \| \varepsilon(\boldsymbol{v}_h)  \|_{0,E}^2   +   \frac{h_e^{-1}}{2 C_0}  \|\boldsymbol{v}_h \|_{0,E}^2 \right)+  \frac{\gamma_N}{h_e} \sum_{E \in \mathcal{E}_W} \|\boldsymbol{v}_h \|_{0,E}^2   \\ & \hspace{-5mm}
  			\geq \xi \| \varepsilon( \boldsymbol{v}_h) \|^2_{0,\Omega} - 4 \nu {C_{5}^2}{C_0} \sum_{K \in \mathcal{T}_h} \| \varepsilon(\boldsymbol{v}_h)  \|_{0,K}^2 -  
  			 \frac{2 \nu}{C_0 \gamma_N} \sum_{E \in \mathcal{E}_W} \frac{\gamma_N}{h_e}	\|\boldsymbol{v}_h \cdot \boldsymbol{n} \|_{0,E}^2  +  \frac{\gamma_N}{h_e} \sum_{E \in \mathcal{E}_W} \|\boldsymbol{v}_h \cdot \boldsymbol{n} \|_{0,E}^2 \\&  -  
  			 \frac{2 \nu}{C_0 \gamma_N} \sum_{E \in \mathcal{E}_I} \frac{\gamma_N}{h_e}	\|\boldsymbol{v}_h  \|_{0,E}^2  +  \frac{\gamma_N}{h_e} \sum_{E \in \mathcal{E}_I} \|\boldsymbol{v}_h \|_{0,E}^2  
  	         \\  & \hspace{-5mm} \geq \xi \| \varepsilon( \boldsymbol{v}_h) \|^2_{0,\Omega} - {4 \nu C_{5}^2}{C_0} \| \nabla \boldsymbol{v}_h \|^2_{0,\Omega} +  \left(1-\frac{2 \nu }{C_0 \gamma_N}\right) \sum_{E \in \mathcal{E}_W} \frac{\gamma_N}{h_e} \|\boldsymbol{v}_h \cdot \boldsymbol{n} \|_{0}^2 +  \left(1-\frac{2 \nu }{C_0 \gamma_N}\right) \sum_{E \in \mathcal{E}_W} \frac{\gamma_N}{h_e} \|\boldsymbol{v}_h \|_{0}^2 
  	         \\& \hspace{-5mm}  \geq \left(\xi - {4 \nu C_{5}^2}{C_0}\right) \| \varepsilon( \boldsymbol{v}_h) \|^2_{0,\Omega} + \left(\gamma_N-\frac{2 \nu}{C_0}\right) \sum_{E \in \mathcal{E}_W} \frac{1}{h_e} \|\boldsymbol{v}_h \cdot \boldsymbol{n} \|_{0,E}^2 + \left(\gamma_N-\frac{2 \nu}{C_0}\right) \sum_{E \in \mathcal{E}_I} \frac{1}{h_e} \|\boldsymbol{v}_h  \|_{0,E}^2
  	        \\& \hspace{-5mm} \geq C_S \|\boldsymbol{v}_h\|^2_{1,h}.
  			\end{align*}
  			By selecting the positive parameter $C_0$ such that $C_0 < \frac{\xi}{ 4 \nu C_{5}^2 }$, we ensure that $\left(\xi - {4 \nu C_{5}^2}{ C_0}\right) >0 $. Additionally, we define $C_S = \min \{\xi - {4 \nu C_{5}^2}{ C_0},\gamma_N -\frac{2 \nu}{C_0}\}$ with $\gamma_N > \frac{2 \nu}{C_0}$.
  		\end{proof}
  	\begin{lemma}\label{infsup}
  				There exists $\hat{\theta}>0$, independent of $h$, such that
  				\begin{align}\label{46}
  				\sup _{\boldsymbol{0} \neq \boldsymbol{v}_h \in \mathrm{V}_h} \frac{\left | B^h( \boldsymbol{v}_h,q_h) \right | }{\|\boldsymbol{v}_h\|_{1,h}} \geq \hat{\theta} \| q_h \|_{0,\Omega} \quad \forall q \in \Pi_h. 
  				\end{align}
  			\end{lemma}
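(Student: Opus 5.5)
Since $\Pi_h=\{q_h\in C(\overline\Omega):q_h|_K\in\mathbb P_{k-1}(K)\}$ is the full continuous $\mathbb P_{k-1}$ space with no mean-zero constraint, the plan is to reduce \eqref{46} to the classical Taylor--Hood inf--sup condition on $\boldsymbol H^1_0(\Omega)$ and then recover control of constants through the boundary terms in $B^h$.

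\textbf{Step 1 (interior part).} Take $\boldsymbol v_h\in\mathrm V_h\cap\boldsymbol H^1_0(\Omega)$. All boundary integrals in $B^h$ vanish, and $\|\boldsymbol v_h\|_{1,h}\lesssim\|\nabla\boldsymbol v_h\|_{0,\Omega}$ because the trace terms in $\|\cdot\|_{1,h}$ are zero. Hence $B^h(\boldsymbol v_h,q_h)=-(q_h,\nabla\cdot\boldsymbol v_h)$. Write $q_h=q_0+\bar q$, where $\bar q=|\Omega|^{-1}\int_\Omega q_h$. Since $\int_\Omega\nabla\cdot\boldsymbol v_h=0$, the constant $\bar q$ does not contribute. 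The standard Taylor--Hood inf--sup condition (Boffi--Brezzi--Fortin, for $k\ge2$ on regular meshes) therefore gives
\[
\sup_{\boldsymbol v_h}\frac{|B^h(\boldsymbol v_h,q_h)|}{\|\boldsymbol v_h\|_{1,h}}\ \ge\ \beta_0\,\|q_0\|_{0,\Omega}.
\]

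\textbf{Step 2 (constant mode).} For $\bar q$, integrate by parts elementwise. Because $q_h$ is continuous, $B^h(\boldsymbol v_h,q_h)=(\nabla q_h,\boldsymbol v_h)-\int_{\Gamma_O}q_h\,\boldsymbol v_h\cdot\boldsymbol n\,ds$: the $\Gamma_W\cup\Gamma_I$ boundary terms cancel and only the outlet term remains. For $q_h=\bar q$ this equals $-\bar q\int_{\Gamma_O}\boldsymbol v_h\cdot\boldsymbol n$. Fix a smooth $\boldsymbol w\in\boldsymbol H^1(\Omega)$ that vanishes on $\Gamma_I\cup\Gamma_W$ and satisfies $\int_{\Gamma_O}\boldsymbol w\cdot\boldsymbol n=1$, and let $\boldsymbol w_h$ be its Scott--Zhang interpolant, which preserves the zero traces. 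For $h$ small, $\int_{\Gamma_O}\boldsymbol w_h\cdot\boldsymbol n\ge\tfrac12$ and $\|\boldsymbol w_h\|_{1,h}\lesssim\|\boldsymbol w\|_{1}$. This yields $|B^h(\boldsymbol w_h,\bar q)|\gtrsim\|\bar q\|_{0,\Omega}$.

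\textbf{Step 3 (combination).} Use the Fortin-type combination $\boldsymbol v_h=\boldsymbol v_0+\delta\,\boldsymbol w_h$ with $\delta$ scaled by $\pm\bar q$. Here $\boldsymbol v_0\in\mathrm V_h\cap\boldsymbol H^1_0(\Omega)$ realizes Step~1 with $\|\boldsymbol v_0\|_{1,h}\lesssim\|q_0\|_{0,\Omega}$. The cross term $B^h(\boldsymbol w_h,q_0)$ is bounded via Theorem~\ref{qa4} by $C_3\|\boldsymbol w_h\|_{1,h}\|q_0\|_{0,\Omega}$, and is absorbed by Young's inequality after choosing the weight of $\boldsymbol v_0$ large relative to $\delta$. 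This gives \eqref{46} with $\hat\theta$ depending on $\beta_0$, $C_3$ and $\boldsymbol w$, and independent of $h$.

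\textbf{Main obstacle.} The delicate point is the constant mode in Step~2. The approach needs $|\Gamma_O|>0$ and a discrete field $\boldsymbol w_h$ whose normal flux through the outlet is bounded below uniformly in $h$. If instead the constant mode were treated via the $\Gamma_W$ or $\Gamma_I$ boundary terms, one would have to handle the $h_e^{-1}$ weights in $\|\cdot\|_{1,h}$. That is why it is cleaner to use $\boldsymbol w_h$ vanishing on $\Gamma_I\cup\Gamma_W$, so that those weighted terms do not enter.
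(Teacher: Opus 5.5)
Your proof is correct, and it is more complete than what the paper offers. The paper gives no argument of its own and only points to \cite{MR4812237}. The standard Nitsche argument of that type is essentially your Step~1: restrict to $\mathrm{V}_h\cap\boldsymbol{H}^1_0(\Omega)$, where the boundary terms of $B^h$ and of $\|\cdot\|_{1,h}$ vanish and $\|\varepsilon(\boldsymbol v_h)\|_{0,\Omega}\le\|\nabla\boldsymbol v_h\|_{0,\Omega}$, then invoke the classical Taylor--Hood inf--sup condition. That only controls the mean-free part of $q_h$.

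Here, however, $\Pi_h$ contains constants. Your identity $B^h(\boldsymbol v_h,1)=-\int_{\Gamma_O}\boldsymbol v_h\cdot\boldsymbol n\,ds$ shows that the $\Gamma_W\cup\Gamma_I$ terms cancel identically on the constant mode. So the outlet is not merely the cleaner choice but the only possible source of control. In particular, the lemma is false if $\Gamma_O=\emptyset$, a hypothesis the paper leaves implicit.

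Your Steps~2--3 supply exactly this missing piece, and the combination is sound. With $\boldsymbol v_0\in\boldsymbol H^1_0(\Omega)$ one has $B^h(\boldsymbol v_0,\bar q)=0$. The cross term is simply $|B^h(\boldsymbol w_h,q_0)|=|(q_0,\nabla\cdot\boldsymbol w_h)|\le\sqrt{d}\,\|q_0\|_{0,\Omega}\|\varepsilon(\boldsymbol w_h)\|_{0,\Omega}$, so you do not even need Theorem~\ref{qa4}. This gives $\hat\theta$ in terms of $\beta_0$, $|\Omega|$ and the fixed outlet field $\boldsymbol w$.

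The citation buys brevity. Your route buys a self-contained proof that makes the dependence on $|\Gamma_O|>0$ explicit. Its costs are the restriction $h\le h_0$, which comes from the asymptotic flux bound $\int_{\Gamma_O}\boldsymbol w_h\cdot\boldsymbol n\,ds\ge\tfrac12$, and the usual mesh assumption under which Taylor--Hood is inf--sup stable. Both should be stated as hypotheses.
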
	
  \begin{proof}
 See \cite{MR4812237}. 
\end{proof}	
 \begin{lemma}\label{coercivity_t}
  There exists a positive constant $C_T$, independent of $h$, such that
  			\begin{align}\label{temp_G}
  			A^h_T(\theta_h, \theta_h) \geq C_T \|\theta_h\|^2_{1,\Omega} \quad \forall \theta_h \in \mathrm{M}_h,
  			\end{align}
              where $ C_T = \min \{ \kappa - C_0 C_5^2, \gamma_N - \frac{1}{C_0} \} $ with $\gamma_N \geq \gamma_0 > \frac{1}{C_0}$, $C_0 < \frac{\kappa}{ C_{5}^2 }$.
 \end{lemma}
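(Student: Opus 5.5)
The plan is to mirror the argument of Lemma~\ref{S}, now for the scalar form $A_T^h$. Here $\|\theta_h\|_{1,\Omega}$ is read as the discrete energy norm $\|\theta_h\|_{1,h}$ introduced above. We first evaluate $A_T^h$ on the diagonal, using \eqref{F}:
\begin{align*}
A_T^h(\theta_h,\theta_h) = \kappa\|\nabla\theta_h\|_{0,\Omega}^2 + \beta\sum_{E\in\mathcal{E}_W}\|\theta_h\|_{0,E}^2 - 2\kappa\sum_{E\in\mathcal{E}_I}\int_E(\nabla\theta_h\cdot\boldsymbol{n})\,\theta_h\,ds + \gamma_N\sum_{E\in\mathcal{E}_I}h_e^{-1}\|\theta_h\|_{0,E}^2 .
\end{align*}
Since $\beta>0$, the Robin term on $\Gamma_W$ is nonnegative and can be discarded. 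This leaves only the symmetric Nitsche consistency term on $\Gamma_I$ to control.

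For that term we use Cauchy--Schwarz and insert the weights $h_e^{\pm1/2}$. Young's inequality with parameter $C_0>0$ then gives
\begin{align*}
2\kappa\Big|\int_E(\nabla\theta_h\cdot\boldsymbol{n})\theta_h\,ds\Big| \le \kappa\Big( C_0\, h_e\|\nabla\theta_h\|_{0,E}^2 + \frac{1}{C_0}h_e^{-1}\|\theta_h\|_{0,E}^2\Big).
\end{align*}
If needed, we rescale $C_0$ so the constants match the statement's form $\kappa - C_0C_5^2$, $\gamma_N - 1/C_0$. The next step is the key one. We apply the discrete inverse trace inequality $h_e\|\nabla\theta_h\|_{0,E}^2\le C_5^2\|\nabla\theta_h\|_{0,K_E}^2$, where $K_E$ is the element containing $E$. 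This is valid because $\nabla\theta_h|_{K}$ is a polynomial, and $C_5$ is the inverse-trace constant already used in Lemma~\ref{S}. Shape regularity guarantees each element has boundedly many faces on $\Gamma_I$, so summing over $E\in\mathcal{E}_I$ yields
\begin{align*}
A_T^h(\theta_h,\theta_h) \ge (\kappa - C_0C_5^2)\|\nabla\theta_h\|_{0,\Omega}^2 + \Big(\gamma_N - \frac{1}{C_0}\Big)\sum_{E\in\mathcal{E}_I}h_e^{-1}\|\theta_h\|_{0,E}^2 ,
\end{align*}
up to absorbing $\kappa$ and the face-count constant into $C_0$ and $C_5$.

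Finally we fix the parameters. We first choose $C_0<\kappa/C_5^2$, which makes the gradient coefficient positive. We then require $\gamma_N\ge\gamma_0>1/C_0$, which makes the boundary coefficient positive. Taking $C_T=\min\{\kappa-C_0C_5^2,\ \gamma_N-1/C_0\}$ gives $A_T^h(\theta_h,\theta_h)\ge C_T\|\theta_h\|_{1,h}^2$, with $C_T$ independent of $h$.

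The only delicate point is the inverse-trace step. It is what ties the choice of $C_0$ to the threshold on $\gamma_N$, so the constant bookkeeping must be consistent with that in Lemma~\ref{S}, where the same $\gamma_N$ is shared. The final choice of $\gamma_N$ will therefore be the maximum of the two thresholds.
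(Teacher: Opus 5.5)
Your proposal is correct and follows the paper's proof essentially step for step. Like the paper, you drop the nonnegative $\beta$-term, bound the symmetric Nitsche term on $\Gamma_I$ by Cauchy--Schwarz and Young with parameter $C_0$, apply the inverse trace inequality with constant $C_5$, and then choose $C_0<\kappa/C_5^2$ and $\gamma_N>1/C_0$. Your bookkeeping is slightly more careful than the paper's displayed computation, which drops the factor $\kappa$ in the consistency term and sums over $\mathcal{E}_W$ where $\mathcal{E}_I$ is meant. With $\kappa$ kept, the stated form of $C_T$ is recovered exactly by your rescaling $C_0\mapsto C_0/\kappa$, $C_5\mapsto \kappa\sqrt{N}\,C_5$, where $N$ is the face-count constant.
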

 \begin{proof}
     We use the trace, inverse trace, Cauchy-Schwarz and H\"{o}lder inequalities to obtain 
     \begin{align*}
         {A}^h_T(\theta_h,\theta_h) & \coloneqq \sum_{K\in \mathcal{K}_{h}} (\kappa \nabla \theta_h, \nabla  \theta_h )_K 
   		 + \sum_{E \in \mathcal{E}_W} \int_E \beta (\theta_h )^2 ds  +
   		\sum_{E\in \mathcal{E}_I}\bigg( - 2 \int_{E} (\nabla \theta_h \cdot \boldsymbol{n}) \theta_h d s +\gamma_N \int_{E} {h_e}^{-1}(\theta_h)^2 d s \bigg) \\& \geq  \kappa \| \nabla \theta_h \|_{0,\Omega} - 2   \sum_{E \in \mathcal{E}_I} \left( \frac{h_e C_0}{2} \|  \nabla \theta_h   \|_{0,E}^2   +   \frac{h_e^{-1}}{2 C_0}  \|\theta_h  \|_{0,E}^2 \right)+  \frac{\gamma_N}{h_e} \sum_{E \in \mathcal{E}_W} \|\theta_h \|_{0,E}^2  \\& \geq (\kappa - C_0 C_5^2) \| \nabla \theta_h \|_{0,\Omega}  + \left( \gamma_N - \frac{1}{C_0}\right)  \frac{1}{h_e} \sum_{E \in \mathcal{E}_W} \|\theta_h \|_{0,E}^2 \\ & \geq C_T \| \theta_h\|^2_{1,h}.
     \end{align*}
 \end{proof}
\section{Existence and uniqueness of discrete   solutions}\label{section3}
In this section, we aim to establish the well-posedness of problem \eqref{E}. We will employ a fixed-point operator linked to a linearised form of the problem and demonstrate that this operator has a  unique fixed point. This will allow us to prove the well-posedness of problem \eqref{E} using the Banach fixed-point theorem.
  		
\subsubsection{The discrete fixed-point operator and its well-posedness}
  Let us introduce the set
  		\begin{align}\label{qa5}
  		\boldsymbol{K}_h &\coloneqq\left\{(\boldsymbol{u}_h, \theta_h) \in \mathrm{V}_h \times \mathrm{M}_h :\|(\boldsymbol{u}_h, \theta_h)\|_{1,h} \leq {\hat{\alpha}}^{-1}C(\kappa, \nu, \boldsymbol{u}_{\star}, \theta_{\star}, \boldsymbol{f}, g, \alpha) \right\}, 
  		\end{align}
  		with $\hat{\alpha}>0$ being the constant defined below in Theorem~\ref{51}. Now, we define the discrete fixed-point operator as
  		\begin{align*}
        \mathcal{J}_h: \boldsymbol{K}_h \rightarrow \boldsymbol{K}_h, &\quad (\boldsymbol{w}_h,\psi_h) \rightarrow \mathcal{J}_h\left(\boldsymbol{w}_h, \psi_h\right)=(\boldsymbol{u}_h, \theta_h),
  		\end{align*} 
  		where, for a given $(\boldsymbol{w}_h,\psi_h) \in \boldsymbol{K}_h, 
     (\boldsymbol{u}_h, \theta_h)$ represents the first component of the solution of the following linearised version of problem \eqref{E}: Find $\left( \boldsymbol{u}_h, \theta_h, p_h \right) \in \mathrm{V}_h \times \mathrm{M}_h \times  \mathrm{\Pi}_h $ 
  		\begin{equation}\label{43}
  		\begin{array}{rlrl}
        {A}_S^h(\boldsymbol{u}_h, \boldsymbol{v}_h) + {O}_S^h(\boldsymbol{w}_h ; \boldsymbol{u}_h, \boldsymbol{v}_h) + {B}^h(\boldsymbol{v}_h,p_h) & = D(\psi_h,\boldsymbol{v}_h)+F_v(\boldsymbol{v}_h) & \forall \boldsymbol{v}_h \in \mathrm{V}_h, \\
  	    {B}^h(\boldsymbol{u}_h, q_h) & = F_q(q_h) & \forall q_h \in \mathrm{\Pi}_h, \\ 
       {A}^h_T(\theta_h,\varphi_h) + O_T(\boldsymbol{w}_h; \theta_h, \varphi_h) & = F_{\varphi}(\varphi_h) & \forall \varphi_h \in \mathrm{M}_{h}.
  		\end{array}
  		\end{equation}
  		Based on the above, we can establish the following relation
    \begin{align}\label{44}
  		\mathcal{J}_h\left(\boldsymbol{u}_h, \theta_h\right)=(\boldsymbol{u}_h,\theta_h) \Leftrightarrow\left( \boldsymbol{u}_h,\theta_h,p_h\right) \in \mathrm{V}_h \times \mathrm{M}_h \times  \mathrm{\Pi}_h 
  \quad\text{satisfies} \ \ \ \eqref{E}.
\end{align}
  		
 To guarantee the well-posedness of the discrete problem \eqref{E}, it is enough to demonstrate the existence of a unique fixed-point for $\mathcal{J}_h$ within the set $\boldsymbol{K}_h$. However, before delving into the solvability analysis, we first need to establish that the operator $\mathcal{J}_h$ is well-defined.
Let us introduce the bilinear form.
  		\begin{align}\label{aa}
  		\mathcal{C}_h\left[(\boldsymbol{u}_h, \theta_h, p_h);( \boldsymbol{v}_h,\varphi_h, q_h)\right]={A}_S^h(\boldsymbol{u}_h, \boldsymbol{v}_h) + {B}^h(\boldsymbol{v}_h,p_h) + {B}^h(\boldsymbol{u}_h, q_h) +  {A}^h_T(\theta_h,\varphi_h) .
  		\end{align}
  		\begin{theorem}\label{51}
  			There exist a positive constant $\hat{\alpha}= \hat{\alpha}(C_S,C_T,\hat{\theta})$ such that 
  			\begin{align*}
  			\sup _{\boldsymbol{0} \neq (\boldsymbol{v}_h,\varphi_h , q_h) \in \mathrm{V}_h \times \mathrm{M}_h \times  \mathrm{\Pi}_h}\frac{\mathcal{C}_h\left[\left( \boldsymbol{u}_h, \theta_h, p_h\right);\left( \boldsymbol{v}_h, \varphi_h, q_h \right)\right]}{\left\|\left( \boldsymbol{v}_h, \varphi_h, q_h \right)\right\|} \geq \hat{\alpha}\left\|\left(\boldsymbol{u}_h,\theta_h, p_h\right)\right\| \quad \forall\left( \boldsymbol{u}_h,\varphi_h,p_h\right) \in \mathrm{V}_h \times \mathrm{M}_h \times \mathrm{\Pi}_h,
  			\end{align*}
  			with $C_S$, $C_T$ and $\hat{\theta}$ are the coercivity and inf-sup stability constants. 
  		\end{theorem}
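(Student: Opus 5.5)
The plan is to prove the global inf-sup condition for $\mathcal{C}_h$ by exploiting its block structure. The temperature block $A_T^h$ is decoupled from the Stokes block $A_S^h + B^h$. We take $\|(\boldsymbol{v}_h,\varphi_h,q_h)\|^2 \coloneqq \|\boldsymbol{v}_h\|_{1,h}^2+\|\varphi_h\|_{1,h}^2+\|q_h\|_{0,\Omega}^2$, which is the norm $\left|\!\left|\!\left|\cdot\right|\!\right|\!\right|$. It then suffices to build, for a given $(\boldsymbol{u}_h,\theta_h,p_h)$, a single test triple $(\boldsymbol{v}_h,\varphi_h,q_h)$ with two properties. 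First, $\mathcal{C}_h[(\boldsymbol{u}_h,\theta_h,p_h);(\boldsymbol{v}_h,\varphi_h,q_h)] \geq c_1 \|(\boldsymbol{u}_h,\theta_h,p_h)\|^2$. Second, $\|(\boldsymbol{v}_h,\varphi_h,q_h)\| \leq c_2\|(\boldsymbol{u}_h,\theta_h,p_h)\|$. Then $\hat{\alpha}=c_1/c_2$.

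\textbf{Step 1 (temperature).} Choose $\varphi_h=\theta_h$. Lemma~\ref{coercivity_t} gives $A_T^h(\theta_h,\theta_h)\geq C_T\|\theta_h\|_{1,h}^2$, and this contributes nothing to the other blocks.

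\textbf{Step 2 (pressure).} By Lemma~\ref{infsup} and finite-dimensionality, pick $\boldsymbol{w}_h\in\mathrm{V}_h$ with $\|\boldsymbol{w}_h\|_{1,h}=\|p_h\|_{0,\Omega}$ and $B^h(\boldsymbol{w}_h,p_h)\geq \hat{\theta}\|p_h\|_{0,\Omega}^2$, flipping the sign if necessary.

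\textbf{Step 3 (Stokes test functions).} Set $\boldsymbol{v}_h=\boldsymbol{u}_h+\delta\boldsymbol{w}_h$ and $q_h=-p_h$. The two $B^h$ terms involving $\boldsymbol{u}_h$ cancel, so the Stokes part becomes
\[
A_S^h(\boldsymbol{u}_h,\boldsymbol{u}_h)+\delta A_S^h(\boldsymbol{u}_h,\boldsymbol{w}_h)+\delta B^h(\boldsymbol{w}_h,p_h).
\]
We bound the middle term with Theorem~\ref{qa4} and Young's inequality:
\[
\delta\, |A_S^h(\boldsymbol{u}_h,\boldsymbol{w}_h)| \leq \frac{\delta C_1^2}{2\hat{\theta}}\|\boldsymbol{u}_h\|_{1,h}^2+\frac{\delta\hat{\theta}}{2}\|p_h\|_{0,\Omega}^2 .
\]
Finally choose $\delta$ small, for instance $\delta = \hat{\theta}C_S/C_1^2$. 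The Stokes part is then at least $\tfrac{C_S}{2}\|\boldsymbol{u}_h\|_{1,h}^2+\tfrac{\delta\hat{\theta}}{2}\|p_h\|_{0,\Omega}^2$. The bound $\|(\boldsymbol{v}_h,\varphi_h,q_h)\|\leq (1+\delta)\|(\boldsymbol{u}_h,\theta_h,p_h)\|$ follows from the triangle inequality.

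\textbf{Main obstacle.} Lemma~\ref{S} only gives coercivity of $A_S^h$ on the kernel $\mathrm{Z}_h$, whereas Step 3 needs $A_S^h(\boldsymbol{u}_h,\boldsymbol{u}_h)\geq C_S\|\boldsymbol{u}_h\|_{1,h}^2$ for arbitrary $\boldsymbol{u}_h\in\mathrm{V}_h$. Inspecting the proof of Lemma~\ref{S}, the divergence constraint is never used. The bound rests only on the trace/inverse estimates and the choice of $\gamma_N$ and $C_0$, together with Korn's inequality so that $\|\varepsilon(\cdot)\|$ controls $\nabla\cdot$. The first attempt is therefore to note that the estimate holds on all of $\mathrm{V}_h$ and to use it directly in the argument above.

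If one insists on using only the kernel version, the fallback is the standard Babu\v{s}ka--Brezzi route. Decompose $\boldsymbol{u}_h=\boldsymbol{u}_0+\boldsymbol{u}_\perp$ with $\boldsymbol{u}_0\in\mathrm{Z}_h$, and control $\|\boldsymbol{u}_\perp\|_{1,h}$ via Lemma~\ref{infsup}. Then test separately in the $\boldsymbol{v}$- and $q$-directions. This gives $\hat{\alpha}$ depending on $C_S$, $C_1$ and $\hat{\theta}$ in the usual way, and hence still an $h$-independent $\hat{\alpha}=\hat{\alpha}(C_S,C_T,\hat{\theta})$ up to the continuity constant $C_1$.
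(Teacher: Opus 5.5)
Your argument is correct, but your main route differs from the paper's. The paper's proof is a one-liner. It combines continuity (Theorem~\ref{qa4}), coercivity of $A_S^h$ on the discrete kernel $\mathrm{Z}_h$ (Lemma~\ref{S}) and the inf-sup condition for $B^h$ (Lemma~\ref{infsup}), and feeds them into the abstract Babu\v{s}ka--Brezzi result \cite[Proposition 2.36]{MR2050138}. The decoupled temperature block is implicitly covered by Lemma~\ref{coercivity_t}. This is exactly your fallback.

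Your primary argument instead builds the test triple $(\boldsymbol{u}_h+\delta\boldsymbol{w}_h,\theta_h,-p_h)$ explicitly. That requires coercivity of $A_S^h$ on all of $\mathrm{V}_h$. You are right that the proof of Lemma~\ref{S} never uses the kernel constraint, since the penalty $\gamma_N$ alone absorbs the Nitsche consistency terms, so this step is legitimate here. The cancellation of the $B^h(\boldsymbol{u}_h,p_h)$ terms, the Young step and the choice $\delta=\hat{\theta}C_S/C_1^2$ all check out.

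What each route buys:
\begin{itemize}
\item Your route is self-contained and gives the explicit constant $\hat{\alpha}=\min\{C_S/2,\,\hat{\theta}^2C_S/(2C_1^2),\,C_T\}/(1+\delta)$. This makes visible the dependence on the continuity constant $C_1$, which the notation $\hat{\alpha}(C_S,C_T,\hat{\theta})$ hides; the abstract proposition carries the same dependence.
\item The paper's route needs only the weaker kernel coercivity that Lemma~\ref{S} actually states. It would therefore survive in settings where the discrete form is not coercive on the whole space.
\end{itemize}

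One small correction to your remark on Korn's inequality. The coercivity estimate does not need it if you apply the discrete trace inequality directly to the piecewise polynomial $\varepsilon(\boldsymbol{v}_h)$. Korn only enters through the paper's unnecessary detour via $\|\nabla\boldsymbol{v}_h\|_{0,\Omega}$. Likewise, $\nabla\cdot\boldsymbol{v}_h=\mathrm{tr}\,\varepsilon(\boldsymbol{v}_h)$ is controlled pointwise without any Korn argument.
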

\begin{proof}
    Owing to Theorem~\ref{qa4}, it is clear that $\mathcal{C}_h\left[\cdot ; \cdot \right]$ is bounded. Moreover, from Lemma~\ref{S}, Lemma~\ref{infsup}, and \cite[Proposition 2.36]{MR2050138}, it is not difficult to see that the above inf-sup condition holds.
\end{proof}

Now, we are in position to establish the well-posedness of $\mathcal{J}_{\text {h }}$.
  		\begin{theorem}\label{qa7}
  			Assume that
  			\begin{align}\label{53}
  			\frac{2}{\hat{\alpha}^2} C(\kappa, \nu, \boldsymbol{u}_{\star}, \theta_{\star}, \boldsymbol{f}, g, \alpha) \leq 1.
  			\end{align}
  			Then, given $(\boldsymbol{w}_h, \psi_h) \in \boldsymbol{K}_h$, there exists a unique $(\boldsymbol{u}_h, \theta_h) \in \boldsymbol{K}_h$ such that $\mathcal{J}_h\left(\boldsymbol{w}_h, \psi_h \right)=(\boldsymbol{u}_h, \theta_h)$.	
  		\end{theorem}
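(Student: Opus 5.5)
The plan is to view the linearised problem \eqref{43}, for fixed $(\boldsymbol{w}_h,\psi_h)\in\boldsymbol{K}_h$, as a perturbation of the linear problem governed by $\mathcal{C}_h$ in \eqref{aa}. Define
\begin{align*}
\mathcal{C}_h^{\boldsymbol{w}_h}\left[(\boldsymbol{u}_h,\theta_h,p_h);(\boldsymbol{v}_h,\varphi_h,q_h)\right] \coloneqq \mathcal{C}_h\left[(\boldsymbol{u}_h,\theta_h,p_h);(\boldsymbol{v}_h,\varphi_h,q_h)\right] + O_S^h(\boldsymbol{w}_h;\boldsymbol{u}_h,\boldsymbol{v}_h) + O_T^h(\boldsymbol{w}_h;\theta_h,\varphi_h),
\end{align*}
so that \eqref{43} reads $\mathcal{C}_h^{\boldsymbol{w}_h}[(\boldsymbol{u}_h,\theta_h,p_h);\cdot] = D(\psi_h,\cdot)+F_v+F_q+F_\varphi$. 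Since the spaces are finite dimensional, it suffices to prove a discrete inf-sup condition for $\mathcal{C}_h^{\boldsymbol{w}_h}$. Then the Banach--Ne\v{c}as--Babu\v{s}ka theorem gives existence and uniqueness of $(\boldsymbol{u}_h,\theta_h,p_h)$. It remains to check that $(\boldsymbol{u}_h,\theta_h)$ lies in $\boldsymbol{K}_h$.

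\textbf{Step 1 (perturbed inf-sup).} Theorem~\ref{51} gives the inf-sup constant $\hat{\alpha}$ for $\mathcal{C}_h$, and the convective parts are controlled by the trilinear bounds of Theorem~\ref{qa4}. For every triple,
\begin{align*}
\sup_{(\boldsymbol{v}_h,\varphi_h,q_h)\neq \boldsymbol{0}} \frac{\mathcal{C}_h^{\boldsymbol{w}_h}[\cdots]}{\|(\boldsymbol{v}_h,\varphi_h,q_h)\|} \geq \left(\hat{\alpha} - \max\{C_2,C_6\}\,\|\boldsymbol{w}_h\|_{1,h}\right)\|(\boldsymbol{u}_h,\theta_h,p_h)\|.
\end{align*}
Because $\boldsymbol{w}_h\in\boldsymbol{K}_h$, we have $\|\boldsymbol{w}_h\|_{1,h}\le \hat{\alpha}^{-1}C$. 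Assumption \eqref{53}, read with the generic constant $C$ absorbing $C_2$ and $C_6$, gives $\max\{C_2,C_6\}\hat{\alpha}^{-1}C\le \hat{\alpha}/2$. The perturbed inf-sup constant is therefore at least $\hat{\alpha}/2$.

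\textbf{Main obstacle.} The delicate term is the nonlinear outlet term in $O_T^h$, namely $\int_{\Gamma_O}(\boldsymbol{w}_h\cdot\boldsymbol{n})\theta_h\psi(\boldsymbol{w}_h\cdot\boldsymbol{n})\varphi_h$. Here $\psi$ is frozen at $\boldsymbol{w}_h$, so the term is bilinear in $(\theta_h,\varphi_h)$. We need it bounded, as stated in Theorem~\ref{qa4}, using $\psi\in L^\infty$ and the $H^1\hookrightarrow L^4(\partial\Omega)$ trace embedding. That embedding requires a discrete Korn and Poincar\'e inequality in the $\|\cdot\|_{1,h}$ norms, which hold through the inlet boundary terms. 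I will take this as given from Theorem~\ref{qa4}.

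A second subtlety is that $\mathcal{C}_h$ is not symmetric once the convection is added. The finite-dimensional setting rescues this: the inf-sup condition above yields injectivity, and injectivity yields surjectivity because the trial and test spaces coincide and are finite dimensional.

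\textbf{Step 2 (a priori bound).} The inf-sup condition gives
\begin{align*}
\frac{\hat{\alpha}}{2}\|(\boldsymbol{u}_h,\theta_h,p_h)\| \le \sup_{(\boldsymbol{v}_h,\varphi_h,q_h)\neq\boldsymbol{0}} \frac{D(\psi_h,\boldsymbol{v}_h)+F_v(\boldsymbol{v}_h)+F_q(q_h)+F_\varphi(\varphi_h)}{\|(\boldsymbol{v}_h,\varphi_h,q_h)\|}.
\end{align*}
The right-hand side is bounded as follows. The term $D$ is controlled by $C_4\|\psi_h\|_{1,h}\le C_4\alpha\|\boldsymbol{f}\|\hat{\alpha}^{-1}C$. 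The functionals $F_v$, $F_q$ and $F_\varphi$ are controlled by norms of $g$, $\boldsymbol{u}_\star$ and $\theta_\star$, using inverse trace inequalities on $\mathcal{E}_I$. This yields $\|(\boldsymbol{u}_h,\theta_h)\|_{1,h}\le \frac{2}{\hat{\alpha}}\,C(\kappa,\nu,\boldsymbol{u}_\star,\theta_\star,\boldsymbol{f},g,\alpha)$ up to the data constant. Using \eqref{53} once more to absorb the factor of $2$ (again with the generic data constant $C$), we get $\|(\boldsymbol{u}_h,\theta_h)\|_{1,h}\le\hat{\alpha}^{-1}C$, that is, $(\boldsymbol{u}_h,\theta_h)\in\boldsymbol{K}_h$. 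Uniqueness within $\boldsymbol{K}_h$ follows from the uniqueness given by Banach--Ne\v{c}as--Babu\v{s}ka.
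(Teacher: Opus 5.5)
Your proposal follows the paper's proof almost step for step. The paper also perturbs the inf-sup constant $\hat{\alpha}$ of Theorem~\ref{51} by the frozen convective forms $O_S^h(\boldsymbol{w}_h;\cdot,\cdot)$ and $O_T(\boldsymbol{w}_h;\cdot,\cdot)$ using Theorem~\ref{qa4}. It uses $\|\boldsymbol{w}_h\|_{1,h}\le\hat{\alpha}^{-1}C\le\hat{\alpha}/2$ (from $\boldsymbol{K}_h$ and \eqref{53}) to get the constant $\hat{\alpha}/2$ in \eqref{35}, applies Banach--Ne\v{c}as--Babu\v{s}ka, and bounds the solution by the right-hand side to conclude membership in $\boldsymbol{K}_h$.

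The one real difference is the second BNB condition. The paper proves a transposed estimate \eqref{37} by invoking the symmetry of $\mathcal{C}_h$ and rerunning the perturbation argument with trial and test functions exchanged. (In the step leading to \eqref{36} the convective arguments are left unswapped, which is harmless because the bounds of Theorem~\ref{qa4} are symmetric in the last two slots.) You instead note that the inf-sup bound makes the operator injective, and an injective linear map between spaces of equal finite dimension is onto. Your route is shorter and never touches the adjoint of the non-symmetric perturbed form. The paper's route does not rely on finite dimensionality, so it would carry over to a continuous analogue. In the present discrete setting it yields nothing extra, since for a square finite-dimensional system the adjoint inf-sup constant equals the primal one.

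One caveat, which the paper shares: \eqref{53} is an upper bound on $C$, so it cannot ``absorb the factor of $2$'' in your Step~2. What \eqref{35} gives is $\|(\boldsymbol{u}_h,\theta_h)\|_{1,h}\le\frac{2}{\hat{\alpha}}\bigl(C_F+C_4\|\psi_h\|_{1,h}\bigr)\le\frac{2}{\hat{\alpha}}\bigl(C_F+C_4\hat{\alpha}^{-1}C\bigr)$, where $C_F$ bounds $F_v+F_q+F_\varphi$. The $D(\psi_h,\cdot)$ contribution depends on the radius of $\boldsymbol{K}_h$ itself, so it cannot be folded into a pure data constant. Closing the self-map requires choosing the radius constant with $2C_F+2C_4\hat{\alpha}^{-1}C\le C$. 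That means $2C_4<\hat{\alpha}$ (smallness of $\alpha\|\boldsymbol{f}\|_{0,\Omega}$) and $C\ge 2C_F/(1-2C_4/\hat{\alpha})$; \eqref{53} is then a separate smallness requirement on that $C$. The paper silently writes $\hat{\alpha}^{-1}$ where \eqref{35} gives $2\hat{\alpha}^{-1}$, but you should state the choice of $C$ explicitly rather than attribute it to \eqref{53}.

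Likewise, bounding $F_v$ and $F_\varphi$ through inverse trace inequalities on $\mathcal{E}_I$ produces factors such as $\bigl(\sum_{E\in\mathcal{E}_I}h_e^{-1}\|\boldsymbol{u}_\star\|_{0,E}^2\bigr)^{1/2}$. These are $h$-uniform only when $\boldsymbol{u}_\star=\boldsymbol{0}$ and $\theta_\star=0$, which matches the homogeneity assumption the paper makes for its analysis.
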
 
  		\begin{proof}
  			Given $ (\boldsymbol{w}_h, \psi_h) \in \boldsymbol{K}_h$, we begin by defining the bilinear forms:
  			\begin{align*}
  			\mathcal{A}_{\boldsymbol{w}_h}\left[(\boldsymbol{u}_h,\theta_h, p_h);(\boldsymbol{v}_h,\varphi_h, q_h)\right]&:=\mathcal{C}_h\left[( \boldsymbol{u}_h, \theta_h,p_h);( \boldsymbol{v}_h, \varphi_h, q_h)\right]+ {O}_S^h(\boldsymbol{w}_h ; \boldsymbol{u}_h, \boldsymbol{v}_h) + O_T(\boldsymbol{w}_h; \theta_h, \varphi_h),
            \\ 
            \mathcal{F}_{\psi_h}(\boldsymbol{v}_h, \varphi_h, q_h) & := D(\psi_h,\boldsymbol{v}_h)+F_v(\boldsymbol{v}_h) + F_q(q_h) + F_{\varphi}(\varphi_h), 
  			\end{align*}
  			where $\mathcal{C}_h$, $O_S^h$ and $O_T$ are the forms defined in Theorem~\ref{51} and \eqref{F}, respectively, that is
  			\begin{align}\label{A_wh}
  			\mathcal{A}_{\boldsymbol{w}_h}\left[(\boldsymbol{u}_h,\theta_h,p_h);(\boldsymbol{v}_h,\varphi_h,q_h)\right]&={A}_S^h(\boldsymbol{u}_h, \boldsymbol{v}_h) + {B}^h(\boldsymbol{v}_h,p_h)  + {B}^h(\boldsymbol{u}_h, q_h) +  {A}^h_T(\theta_h,\varphi_h)  \nonumber \\ & \quad + {O}_S^h(\boldsymbol{w}_h ; \boldsymbol{u}_h, \boldsymbol{v}_h) + O_T(\boldsymbol{w}_h; \theta_h, \varphi_h).
  			\end{align}
  			Then, problem \eqref{E} can be rewritten equivalently as: Given $(\boldsymbol{w}_h,\psi_h)$ in $\boldsymbol{K}_h$, find $(\boldsymbol{u}_h, \theta_h, p_h) \in \mathrm{V}_h \times \mathrm{M}_h, \Pi_h$, such that
  			\begin{align}\label{32}
  			\mathcal{A}_{\boldsymbol{w}_h}\left[(\boldsymbol{u}_h, \theta_h, p_h);(\boldsymbol{v}_h, \varphi_h, q_h)\right]=\mathcal{F}_{\psi_h}(\boldsymbol{v}_h, \varphi_h, q_h) \quad \forall( \boldsymbol{v}_h,q_h) \in \mathrm{V}_h \times \Pi_h.
  			\end{align}
  			First, we establish that $\mathcal{J}_h$ is well defined in order to demonstrate the well-posedness of problem \eqref{32} using the Banach-Ne\v{c}as-Babu\v{s}ka theorem  \cite[Theorem 2.6] {MR2050138}.
  		     Consider $(\boldsymbol{u}_h, \theta_h, p_h),( \hat{\boldsymbol{v}}_h, \hat{\varphi}_h, \hat{q}_h) \in \mathrm{V}_h \times \mathrm{M}_h \times \Pi_h$ with $(\hat{\boldsymbol{v}}_h, \hat{\varphi}_h, \hat{q}_h) \neq 0$. From Theorem~\ref{qa4} we can observe that
  			$$
  			\begin{aligned}
  			& \sup _{\boldsymbol{0} \neq(\boldsymbol{v}_h, \varphi_h, p_h) \in \mathrm{V}_h \times \Pi_h} \frac{\mathcal{A}_{\boldsymbol{w}_h}\left[(\boldsymbol{u}_h,\theta_h, p_h);(\boldsymbol{v}_h, \varphi_h, q_h)\right]}{\|(\boldsymbol{v}_h, \varphi_h, q_h)\|} \\[3pt]
            & \geq \frac{|\mathcal{C}_h\left[(\boldsymbol{u}_h, \varphi_h, p_h);( \hat{\boldsymbol{v}}_h, \hat{\varphi}_h, \hat{q}_h)\right]|}{\|( \hat{\boldsymbol{v}}_h, \hat{\varphi}_h,\hat{q}_h)\|}-\frac{|{O}_S^h(\boldsymbol{w}_h ; \boldsymbol{u}_h, \hat{\boldsymbol{v}}_h) + O_T(\boldsymbol{w}_h; \theta_h, \hat{\varphi}_h)|}{\|(\hat{\boldsymbol{v}}_h, \hat{\varphi}_h, \hat{q}_h)\|} \\[3pt]
  			& \geq \frac{|\mathcal{C}_h\left[(\boldsymbol{u}_h, \theta_h, p_h);( \hat{\boldsymbol{v}}_h, \hat{\varphi}_h, \hat{q}_h)\right]|}{\|( \hat{\boldsymbol{v}}_h, \hat{\varphi}_h,\hat{q}_h)\|}-\|\boldsymbol{w}_h\|_{1,h}\|(\boldsymbol{u}_h, \theta_h, p_h)\| ,
  			\end{aligned}
  			$$
  			which, together with Theorem~\ref{51} and the fact that $(\hat{\boldsymbol{v}}_h, \hat{\varphi}_h, \hat{p}_h)$ is arbitrary, implies
  			\begin{align}\label{33}
  			\sup _{\boldsymbol{0} \neq (\boldsymbol{v}_h, \varphi_h, p_h) \in \mathrm{V}_h \times \mathrm{M}_h \times \Pi_h} \frac{\mathcal{A}_{\boldsymbol{w}_h}\left[(\boldsymbol{u}_h,\theta_h, p_h);(\boldsymbol{v}_h, \varphi_h, q_h)\right]}{\|(\boldsymbol{v}_h, \varphi_h, p_h)\|} \geq\left(\hat{\alpha}-\|\boldsymbol{w}_h\|_{1,h}\right)\|( \boldsymbol{u}_h, \theta_h, p_h)\|,
  			\end{align}
  		where $\hat\alpha$ is independent of $\boldsymbol{w}_h$. Hence, from the definition of set $\boldsymbol{K}_h$ (see \eqref{qa5}) and assumption \eqref{53}, we get
  			\begin{align}\label{34}
  			\|\boldsymbol{w}_h\|_{1,h} \leq \frac{1}{\hat{\alpha}} C(\kappa, \nu, \boldsymbol{u}_{\star}, \theta_{\star}, \boldsymbol{f}, g, \alpha) \leq \frac{\hat{\alpha}}{2}.
  			\end{align}
  			Then, combining \eqref{33} and \eqref{34}, we obtain
  			\begin{align}\label{35}
  			\sup _{\boldsymbol{0} \neq  (\boldsymbol{v}_h,\theta_h, p_h) \in \mathrm{V}_h \times \mathrm{M}_h \times  \Pi_h} \frac{\mathcal{A}_{\boldsymbol{w}_h}\left[(\boldsymbol{u}_h, \theta_h, p_h);(\boldsymbol{v}_h, \varphi_h, q_h)\right]}{\|(\boldsymbol{v}_h, \varphi_h, q_h)\|} \geq \frac{\hat{\alpha}}{2}\|(\boldsymbol{u}_h,\theta_h, p_h)\| \quad \forall(\boldsymbol{u}_h, \theta_h, p_h) \in  \mathrm{V}_h \times \mathrm{M}_h \times \Pi_h.
  			\end{align}
  			On the other hand, for a given $ (\boldsymbol{u}_h, \theta_h, q_h) \in \mathrm{V}_h \times \mathrm{M}_h \times  \Pi_h$, we observe that
  			$$
  			\begin{aligned}
  			& \sup_{\boldsymbol{0} \neq (\boldsymbol{v}_h, \varphi_h, q_h) \in \boldsymbol{V}_h \times \mathrm{M}_h \times \Pi_h\hspace{-1cm}}   \mathcal{A}_{\mathrm{w}_h}\left[(\boldsymbol{v}_h, \varphi_h, q_h);(\boldsymbol{u}_h, \theta_h,  p_h)\right]  \geq \sup_{\boldsymbol{0} \neq(\boldsymbol{v}_h, \varphi_h, q_h) \in \mathrm{V}_h \times \mathrm{M}_h \times  \Pi_h} \frac{\mathcal{A}_{\boldsymbol{w}_h}\left[(\boldsymbol{v}_h, \varphi_h, q_h);(\boldsymbol{u}_h, \theta_h, p_h)\right]}{\|(\boldsymbol{v}_h, \varphi_h, q_h)\|} \\
  			&  =\sup _{\boldsymbol{0} \neq(\boldsymbol{v}_h, \varphi_h, q_h) \in \mathrm{V}_h \times \mathrm{M}_h \times \Pi_h} \frac{\mathcal{C}_h\left[(\boldsymbol{v}_h, \varphi_h, q_h);(\boldsymbol{u}_h, \theta_h, q_h)\right]+{O}_S^h(\boldsymbol{w}_h ; \boldsymbol{u}_h, \boldsymbol{v}_h) + O_T(\boldsymbol{w}_h; \theta_h, \varphi_h)}{\|(\boldsymbol{v}_h, \varphi_h, q_h)\|},
  			\end{aligned}
  			$$
  			from which,
  			$$
  			\begin{aligned}
  			& \sup _{\boldsymbol{0} \neq (\boldsymbol{v}_h, \varphi_h, q_h) \in \mathrm{V}_h \times \mathrm{M}_h \times \Pi_h} \mathcal{A}_{\boldsymbol{w}_h}\left[( \boldsymbol{v}_h, \varphi_h, q_h);(\boldsymbol{u}_h, \theta_h, q_h)\right] \\ & \geq \sup _{\boldsymbol{0} \neq(\boldsymbol{v}_h, \varphi_h, q_h) \in \mathrm{V}_h \times \mathrm{M}_h \times \Pi_h}\frac{|\mathcal{C}_h\left[(\boldsymbol{v}_h, \varphi_h, q_h);( \boldsymbol{u}_h, \theta_h, p_h)\right]+{O}_S^h(\boldsymbol{w}_h ; \boldsymbol{u}_h, \boldsymbol{v}_h) + O_T(\boldsymbol{w}_h; \theta_h, \varphi_h)|}{\|( \boldsymbol{v}_h, \varphi_h, q_h)\|} \\
  			& \geq \sup _{\boldsymbol{0} \neq(\boldsymbol{v}_h, \varphi_h, q_h) \in \mathrm{V}_h \times \mathrm{M}_h \times \Pi_h} \frac{|\mathcal{C}_h\left[(\boldsymbol{v}_h, \varphi_h, q_h);(\boldsymbol{u}_h, \theta_h, p_h)\right]|}{\|( \boldsymbol{v}_h,\varphi_h,q_h)\|}-\frac{|{O}_S^h(\boldsymbol{w}_h ; \boldsymbol{u}_h, \boldsymbol{v}_h) + O_T(\boldsymbol{w}_h; \theta_h, \varphi_h)|}{\|( \boldsymbol{v}_h, \varphi_h, q_h)\|},
  			\end{aligned}
  			$$
  			for all $\boldsymbol{0} \neq (\boldsymbol{v}_h, \varphi_h, q_h) \in \mathrm{V}_h \times \mathrm{M}_h \times \Pi_h $, which together with Theorem~\ref{qa4}, implies
  			\begin{align}\label{36}
  			\sup _{\boldsymbol{0} \neq (\boldsymbol{v}_h,\varphi_h, q_h) \in \mathrm{V}_h \times \mathrm{M}_h \times  \Pi_h} & \mathcal{A}_{\boldsymbol{w}_h}\left[(\boldsymbol{v}_h, \varphi_h, q_h);(\boldsymbol{u}_h,\theta_h, p_h)\right] \\ & \geq \sup _{\boldsymbol{0} \neq ( \boldsymbol{v}_h,\varphi_h, q_h) \in \mathrm{V}_h \times \mathrm{M}_h \times  \Pi_h} \frac{\mathcal{C}_h\left[( \boldsymbol{v}_h, \varphi_h, q_h);(\boldsymbol{u}_h, \theta_h, p_h)\right]}{\|(\boldsymbol{v}_h, \varphi_h, q_h)\|}-\|\boldsymbol{w}_h\|_{1,h}\|( \boldsymbol{u}_h, \theta_h, p_h)\|.
  			\end{align}
  			Therefore, using the fact that $\mathcal{C}_h\left[\cdot ;\, \cdot\right]$ is symmetric, from the inequality in Theorem~\ref{51} and \eqref{36} we obtain
  			\begin{align*}
  			\sup _{\boldsymbol{0} \neq (\boldsymbol{v}_h, \varphi_h, q_h) \in \mathrm{V}_h \times \mathrm{M}_h \times  \Pi_h} \mathcal{A}_{\boldsymbol{w}_h}\left[( \boldsymbol{v}_h, \varphi_h, q_h);(\boldsymbol{u}_h, \theta_h, p_h)\right] \geq \hat{\alpha} \|( \boldsymbol{u}_h, \theta_h, p_h)\|-\|\boldsymbol{w}_h\|_{1,h}\|( \boldsymbol{u}_h, \theta_h, p_h)\|,
  			\end{align*}
  			which combined with \eqref{34}, yields
  			\begin{align}\label{37}
  			\sup _{\boldsymbol{0} \neq (\boldsymbol{v}_h, \varphi_h, q_h) \in \mathrm{V}_h \times \mathrm{M}_h \times \Pi_h} \mathcal{A}_{\boldsymbol{w}_h}\left[(\boldsymbol{v}_h, \varphi_h, q_h);(\boldsymbol{u}_h, \theta_h, p_h)\right] & \geq \frac{\hat{\alpha}}{2}\|(\boldsymbol{u}_h, \theta_h, p_h)\| >0  \\ & \forall(\boldsymbol{u}_h,\theta_h, p_h) \in \mathrm{V}_h \times \mathrm{M}_h \times \Pi_h,(\boldsymbol{u}_h, \theta_h, p_h) \neq 0.
  			\end{align}
  			By examining \eqref{35} and \eqref{37}, we can deduce that $\mathcal{A}_{\boldsymbol{w}_h}\left( \cdot , \cdot\right)$ satisfies the conditions of the Banach-Ne\v{c}as-Babu\v{s}ka theorem \cite[Theorem 2.6]{MR2050138}. This guarantees the existence of a unique solution $(\boldsymbol{u}_h,\theta_h, p_h) \in \mathrm{V}_h \times \mathrm{M}_h \times  \Pi_h$ to \eqref{E}, or equivalently, the existence of a unique $(\boldsymbol{u}_h , \theta_h) \in \mathrm{V}_h \times \mathrm{M}_h$ such that $\mathcal{J}_h(\boldsymbol{w}_h, \psi_h)=(\boldsymbol{u}_h, \theta_h)$. Furthermore, from \eqref{32} and \eqref{35} we derive the following inequality:
  			$$
  			\|(\boldsymbol{u}_h, \theta_h)\| \leq \|(\boldsymbol{u}_h, \theta_h, p_h)\| \leq \frac{1}{\hat{\alpha}} C(\kappa, \nu, \boldsymbol{u}_{\star}, \theta_{\star}, \boldsymbol{f}, g, \alpha).
  			$$
  		This concludes the proof by showing that $(\boldsymbol{u_h}, \theta_h) \in \boldsymbol{K_h}$.	
  		\end{proof} 
  		\subsubsection{Well-posedness of the discrete problem}
  		The subsequent theorem establishes the well-posedness of Nitsche's scheme \eqref{F}.
  		\begin{theorem}\label{van}
  			Let $(\boldsymbol{f},g) \in \boldsymbol{L}^2(\Omega) \times L^{2}(\Omega)$ and $\boldsymbol{u}_{\star}, \theta_{\star}$ be the boundary data such that
  			\begin{align}\label{55}
  			\frac{2}{\hat{\alpha}^2}C(\kappa, \nu, \boldsymbol{u}_{\star}, \theta_{\star}, \boldsymbol{f}, g, \alpha) \leq 1.
  	    	\end{align}
  			Then, there exists a unique $\left( \boldsymbol{u}_h, \theta_h, p_h\right) \in \mathrm{V}_h \times \mathrm{M}_h \times \mathrm{\Pi}_h$ solution to \eqref{E}. In addition, 
  			\begin{align}\label{qa8}
  			\left\|\boldsymbol{u}_h\right\|_{1,h}+ \left\|\theta_h \right\|_{1,\Omega} + \left\|p_h\right\|_{0,\Omega} \lesssim C(\kappa, \nu, \boldsymbol{u}_{\star}, \theta_{\star}, \boldsymbol{f}, g, \alpha).
  			\end{align}
  		\end{theorem}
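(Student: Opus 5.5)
The plan is to apply the Banach fixed-point theorem to the operator $\mathcal{J}_h:\boldsymbol{K}_h\to\boldsymbol{K}_h$. By Theorem~\ref{qa7}, under assumption \eqref{55} (identical to \eqref{53}), $\mathcal{J}_h$ is well defined and maps $\boldsymbol{K}_h$ into itself. Moreover, $\boldsymbol{K}_h$ is a closed ball in the finite-dimensional space $\mathrm{V}_h\times\mathrm{M}_h$ with the norm $\|\cdot\|_{1,h}$, hence complete. By the equivalence \eqref{44}, a unique fixed point of $\mathcal{J}_h$ in $\boldsymbol{K}_h$ gives a unique solution of \eqref{E} in that set. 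The a priori bound \eqref{qa8} then follows from the last estimate in the proof of Theorem~\ref{qa7}, applied with $(\boldsymbol{w}_h,\psi_h)=(\boldsymbol{u}_h,\theta_h)$. The pressure is controlled through the inf-sup condition \eqref{35}, or equivalently through Lemma~\ref{infsup} applied to the first equation of \eqref{E}.

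The core step is to show that $\mathcal{J}_h$ is a contraction. Take $(\boldsymbol{w}_h^i,\psi_h^i)\in\boldsymbol{K}_h$ and let $(\boldsymbol{u}_h^i,\theta_h^i,p_h^i)$ solve \eqref{43}, for $i=1,2$. Subtracting the two problems gives
\begin{align*}
\mathcal{A}_{\boldsymbol{w}_h^1}\left[(\boldsymbol{u}_h^1-\boldsymbol{u}_h^2,\theta_h^1-\theta_h^2,p_h^1-p_h^2);(\boldsymbol{v}_h,\varphi_h,q_h)\right]
&= D(\psi_h^1-\psi_h^2,\boldsymbol{v}_h) - O_S^h(\boldsymbol{w}_h^1-\boldsymbol{w}_h^2;\boldsymbol{u}_h^2,\boldsymbol{v}_h) \\
&\quad - \left[O_T^h(\boldsymbol{w}_h^1;\theta_h^2,\varphi_h)-O_T^h(\boldsymbol{w}_h^2;\theta_h^2,\varphi_h)\right].
\end{align*}
Applying \eqref{35} on the left-hand side and Theorem~\ref{qa4} on the right-hand side, together with $\|\boldsymbol{u}_h^2\|_{1,h},\|\theta_h^2\|_{1,h}\le \hat{\alpha}^{-1}C$, should yield
\begin{equation*}
\|\mathcal{J}_h(\boldsymbol{w}_h^1,\psi_h^1)-\mathcal{J}_h(\boldsymbol{w}_h^2,\psi_h^2)\| \le \frac{2}{\hat{\alpha}}\left(C_4 + \frac{C_2+\tilde{C}}{\hat{\alpha}}\,C\right)\|(\boldsymbol{w}_h^1-\boldsymbol{w}_h^2,\psi_h^1-\psi_h^2)\|.
\end{equation*}
The factor in front must then be made smaller than one by a smallness condition of the type \eqref{55}. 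This may require absorbing $\alpha\|\boldsymbol{f}\|$, which enters $C_4$, into the data constant $C(\kappa,\nu,\boldsymbol{u}_\star,\theta_\star,\boldsymbol{f},g,\alpha)$.

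The main obstacle is the nonlinear outlet term in $O_T^h$. It is not linear in $\boldsymbol{w}_h$, so its difference has to be split as
\begin{equation*}
\int_{\Gamma_O}\Big[(\boldsymbol{w}^1_h-\boldsymbol{w}^2_h)\cdot\boldsymbol{n}\,\psi(\boldsymbol{w}_h^1\cdot\boldsymbol{n}) + (\boldsymbol{w}_h^2\cdot\boldsymbol{n})\big(\psi(\boldsymbol{w}_h^1\cdot\boldsymbol{n})-\psi(\boldsymbol{w}_h^2\cdot\boldsymbol{n})\big)\Big]\theta_h^2\varphi_h\,ds .
\end{equation*}
The first part is handled with $\psi\in L^\infty$, and the second with the Lipschitz continuity of $\psi$ with constant $L_\psi$. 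Each part is then bounded by H\"older's inequality with exponents $(4,4,4,4)$ or $(3,6,\dots)$ on $\Gamma_O$, the trace embedding $H^1(\Omega)\hookrightarrow L^4(\partial\Omega)$ for $d\le 3$, and a discrete Korn/Poincaré inequality in the $\|\cdot\|_{1,h}$ norm. The resulting constant $\tilde{C}$ depends on $\|\psi\|_\infty$, $L_\psi$ and $\|\boldsymbol{w}_h^2\|_{1,h}\le\hat{\alpha}^{-1}C$. I would carry out this term first and then fix the smallness condition accordingly; the remaining terms follow directly from Theorem~\ref{qa4}.
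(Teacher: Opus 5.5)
Your proposal has the same skeleton as the paper's proof: Banach's fixed-point theorem for $\mathcal{J}_h$ on $\boldsymbol{K}_h$, invariance of $\boldsymbol{K}_h$ from Theorem~\ref{qa7}, subtraction of the two linearized problems, \eqref{35} on the left and Theorem~\ref{qa4} on the right, and \eqref{qa8} read off at the fixed point. Where you part ways is in estimating the right-hand side of the difference equation, and there your version is the one that actually holds.

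The paper writes the temperature-convection difference as $O_T(\boldsymbol{w}_{h1}-\boldsymbol{w}_{h2};\theta_{h2},\varphi_h)$. That treats $O_T$ as linear in its first slot, which is false because of the outlet term $(\boldsymbol{w}_h\cdot\boldsymbol{n})\,\psi(\boldsymbol{w}_h\cdot\boldsymbol{n})$. Your splitting is what is needed: $\|\psi\|_{L^\infty}$ on one piece, and the Lipschitz constant of $\psi$ plus the $L^4(\Gamma_O)$ trace embedding on the other. Note that this uses the Lipschitz continuity of $\psi$ assumed in Section~\ref{section1}, not only $\psi\in L^\infty$.

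The paper also bounds $D(\psi_{h1}-\psi_{h2},\boldsymbol{v}_h)$ by the same product $\|(\boldsymbol{w}_{h1},\psi_{h1})-(\boldsymbol{w}_{h2},\psi_{h2})\|_{1,h}\,\|(\boldsymbol{u}_{h2},\theta_{h2})\|_{1,h}$, which is unjustified. $D$ carries the fixed constant $C_4\sim\alpha\|\boldsymbol{f}\|_{0,\Omega}$ and no factor of the ball radius, so one genuinely needs the extra condition $2C_4/\hat{\alpha}<1$, as you note. The cost of your route is that \eqref{55} must be reinterpreted or supplemented to encode this, with strict inequality so that the map is a true contraction.

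For consistency, apply the same observation to the step you import from Theorem~\ref{qa7}. From \eqref{35} one only gets $\|(\boldsymbol{u}_h,\theta_h,p_h)\|\le \frac{2}{\hat{\alpha}}(C_{\mathrm{data}}+C_4\|\psi_h\|_{1,h})$. So the invariance of $\boldsymbol{K}_h$ also requires $C_4/\hat{\alpha}$ to be small, and the radius must be chosen together with that condition rather than taken verbatim from \eqref{qa5}; for instance, $2C_4/\hat{\alpha}\le 1/2$ with radius $4C_{\mathrm{data}}/\hat{\alpha}$ works.

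Finally, like the paper's, your argument gives uniqueness only among solutions with $(\boldsymbol{u}_h,\theta_h)\in\boldsymbol{K}_h$, not the unconditional uniqueness the statement asserts. You phrase this correctly, but state it explicitly as the actual conclusion.
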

  		\begin{proof}
  			
  			According to the relations given in \eqref{44}, we aim to establish the well-posedness of \eqref{E}. This can be accomplished by using Banach's Theorem to demonstrate that $\mathcal{J}_h$ possesses a unique fixed point in $\boldsymbol{K}_h$.
  			\newline
  			The validity of Assumption \eqref{55} as shown in Theorem~\ref{qa7}, ensures that $\mathcal{J}_h$ is well-defined.
  			Now, let $\boldsymbol{w}_{h1}$, $\boldsymbol{w}_{h2}$, $\boldsymbol{u}_{h1}$, $\boldsymbol{u}_{h2}$ $\in \boldsymbol{K}_h$, be such that $\boldsymbol{u}_{h1}=\mathcal{J}_h\left(\boldsymbol{w}_{h1}\right)$ and $\boldsymbol{u}_{h2}=\mathcal{J}_h\left(\boldsymbol{w}_{h2}\right)$. By employing the definition of 
     $\mathcal{J}_h$ and \eqref{32}, it follows that there exist a unique pair $( \theta_{h1},p_{h1}), (\theta_{h2},p_{h2}) \in \mathrm{M}_h \times {L}^2(\Omega)$ satisfying the following equations:
  		\begin{align*}
        \mathcal{A}_{\boldsymbol{w}_{h1}}\left[\left(\boldsymbol{u}_{h1},\theta_{h1},p_{h1}\right);( \boldsymbol{v}_h,\varphi_h,q_h)\right]& =\mathcal{F}_{\psi_{h1}}(\boldsymbol{v}_h,\varphi_h,q_h), \\ \mathcal{A}_{\boldsymbol{w}_{h2}}\left[\left( \boldsymbol{u}_{h2},\theta_{h2},p_{h2} \right);( \boldsymbol{v}_h,\varphi_h,q_h)\right]&=\mathcal{F}_{\psi_{h2}}(\boldsymbol{v}_h,\varphi_h,q_h), \quad \forall (\boldsymbol{v}_h,\varphi_h,q_h) \in \mathrm{V}_h \times \mathrm{M}_h \times \Pi_h.  
  		\end{align*}
  			By adding and subtracting appropriate terms, we can derive the following:
            \begin{equation} \label{40}
  			\begin{aligned}
  			\mathcal{A}_{\boldsymbol{w}_{h1}}\left[\left( \boldsymbol{u}_{h1}-\boldsymbol{u}_{h2}, \theta_{h1}-\theta_{h2}, p_{h1}-p_{h2}\right);( \boldsymbol{v}_h, \varphi_h,q_h)\right] & =-O_S^h\left(\boldsymbol{w}_{h1}-\boldsymbol{w}_{h2} ;  \boldsymbol{u}_{h2}, \boldsymbol{v}_h \right) -O_T\left(\boldsymbol{w}_{h1}-\boldsymbol{w}_{h2} ;  \theta_{h2}, \varphi_h \right)  \\
            & + D(\psi_{h1} -\psi_{h2},\boldsymbol{v}_h)  \nonumber  \qquad  \forall(\boldsymbol{v}_h,\varphi_h,q_h) \in \mathrm{V}_h \times \mathrm{M}_h \times  \Pi_h.
  			\end{aligned}
            \end{equation}
            Given that $\boldsymbol{w}_{h1} \in \boldsymbol{K}_h$ and using \eqref{40}, \eqref{35}, and Theorem~\ref{qa4}, we can deduce:
  			$$
  			\begin{aligned}
  			& \frac{\hat{\alpha}}{2}\left\|(\boldsymbol{u}_{h1}, \theta_{h1})-(\boldsymbol{u}_{h2}, \theta_{h2})\right\|_{1,h} \\ 
            & \leq \sup _{\boldsymbol{0} \neq(\boldsymbol{v}_h,\varphi_h,p_h) \in \mathrm{V}_h \times \mathrm{M}_h \times  \Pi_h} \frac{\mathcal{A}_{\boldsymbol{w}_{h1}}\left[\left( \boldsymbol{u}_{h1}-\boldsymbol{u}_{h2}, \theta_{h1}-\theta_{h2}, p_{h1}-p_{h2}\right);( \boldsymbol{v}_h, \varphi_h,q_h)\right]}{\|( \boldsymbol{v}_h,\varphi_h,q_h)\|} \\
  			& =\sup _{\boldsymbol{0} \neq( \boldsymbol{v}_h,q_h) \in \mathrm{V}_h \times \Pi_h} \frac{-O_S^h\left(\boldsymbol{w}_{h1}-\boldsymbol{w}_{h2} ;  \boldsymbol{u}_{h2}, \boldsymbol{v}_h \right) -O_T\left(\boldsymbol{w}_{h1}-\boldsymbol{w}_{h2} ;  \theta_{h2}, \varphi_h \right) + D(\psi_{h1} -\psi_{h2},\boldsymbol{v}_h)}{\|(\boldsymbol{v}_h,\varphi_h,q_h)\|} \\
  			& \leq \left\|(\boldsymbol{w}_{h1},\psi_{h1})-(\boldsymbol{w}_{h2},\psi_{h2})\right\|_{1,h}\left\|(\boldsymbol{u}_{h2}, \theta_{h2})\right\|_{1,h},
  			\end{aligned}
  			$$
  			which, together with the fact that $\boldsymbol{u}_{h2} \in \boldsymbol{K}_h$, implies
            \begin{align*}
                \left\|(\boldsymbol{u}_{h1}, \theta_{h1})-(\boldsymbol{u}_{h2}, \theta_{h2})\right\|_{1,h} & \leq \frac{1}{\hat{\alpha}}C(\kappa, \nu, \boldsymbol{u}_{\star}, \theta_{\star}, \boldsymbol{f}, g, \alpha)\left\|(\boldsymbol{w}_{h1},\psi_{h1})-(\boldsymbol{w}_{h2},\psi_{h2})\right\|_{1,h} \\
               \left\|(\boldsymbol{u}_{h1}, \theta_{h1})-(\boldsymbol{u}_{h2}, \theta_{h2})\right\|_{1,h} & \leq \frac{\hat{\alpha}}{2}  \left\|(\boldsymbol{w}_{h1},\psi_{h1})-(\boldsymbol{w}_{h2},\psi_{h2})\right\|_{1,h}.
            \end{align*}
  			Assumption \eqref{55} directly implies that $\mathcal{J}_h$ is a contraction mapping.
  			Now, to establish the estimate \eqref{qa8}, let $(\boldsymbol{u}_h, \theta_h) \in \boldsymbol{K}_h$ be the unique fixed point of $\mathcal{J}_h$ and let $(\boldsymbol{u}_h, \theta_h) \in \mathrm{V}_h \times \mathrm{M}_h $ be the unique solution of \eqref{E} with $( \boldsymbol{u}_h,\theta_h,p_h) \in \mathrm{V}_h \times \mathrm{M}_h \times \Pi_h $. By the definition of $\boldsymbol{K}_h$, it is evident that $(\boldsymbol{u}_h, \theta_h)$ satisfies the following
  			$$
  			\|(\boldsymbol{u}_h, \theta_h)\|_{1,h} \leq {\hat{\alpha}}^{-1}( \|\boldsymbol{f}\|_{\mathrm{V}^{\prime}} + \|g\|_{\mathrm{M}^{\prime}}).
  			$$
  			Consequently, utilizing \eqref{35} on $\mathcal{A}_{\boldsymbol{u}_h}$, referring back to the definition of $\mathcal{A}_{\boldsymbol{u}_h}$ given in \eqref{A_wh}, and using the fact that $( \boldsymbol{u}_h,\theta_h,p_h)$ satisfies \eqref{E}, we obtain
            \begin{align*}
                \|p_h\|_{0,\Omega}\leq\|( \boldsymbol{u}_h,\theta_h,p_h)\| &\leq \frac{2}{\hat{\alpha}} \sup _{\boldsymbol{0} \neq( \boldsymbol{v}_h,\varphi_h,q_h) \in \mathrm{V}_h \times \mathrm{M}_h \times \Pi_h} \frac{\mathcal{A}_{\boldsymbol{u}_h}\left[(\boldsymbol{u}_h,\theta_h,p_h);(\boldsymbol{v}_h,\varphi_h,q_h)\right]}{\|( \boldsymbol{v}_h,\varphi_h,q_h)\|} \\ & =\frac{2}{\hat{\alpha}} \sup _{\boldsymbol{0} \neq( \boldsymbol{v}_h,\varphi_h,q_h) \in \mathrm{V}_h \times \mathrm{M}_h \times \Pi_h} \frac{\mathcal{F}(\boldsymbol{v}_h, \varphi_h, q_h) + D(\theta_h,\boldsymbol{v}_h)}{\|( \boldsymbol{v}_h,\varphi_h,p_h)\|},
            \end{align*}
            Now, we have 
            \begin{align*}
        \frac{\mathcal{F}(\boldsymbol{v}_h, \varphi_h, q_h) + D(\theta_h,\boldsymbol{v}_h)}{\|( \boldsymbol{v}_h,\varphi_h,p_h)\|} & =    \frac{D(\theta_h,\boldsymbol{v}_h)+F_v(\boldsymbol{v}_h) + F_q(q_h) + F_{\theta}(\varphi_h)}{\|( \boldsymbol{v}_h,\varphi_h,p_h)\|} \\ & \frac{\sum_{K\in \mathcal{K}_{h}} (\alpha \theta_h \boldsymbol{f}, \boldsymbol{v}_h)_K +\sum_{E\in \mathcal{E}_I}\bigg(-\int_{E} 2 \nu \varepsilon(\boldsymbol{v}_h) \cdot \boldsymbol{u}_{\star} d s + \gamma_N \int_{E} {h_e}^{-1}(\boldsymbol{u}_{\star} \cdot \boldsymbol{v}_h) d s }{\|( \boldsymbol{v}_h,\varphi_h,p_h)\|}
        \\ & \frac{ + \int_{E} q_h (\boldsymbol{n} \cdot \boldsymbol{u}_{\star}) d s +\int_{\Omega} g \varphi~ dx  -\int_{E} (\nabla \varphi_h \cdot \boldsymbol{n}) \theta_{\star} d s +\gamma_N \int_{E} {h_e}^{-1}(\theta_{\star} \varphi_h)d s \bigg)}{\|( \boldsymbol{v}_h,\varphi_h,p_h)\|} \\ & \leq C(\kappa, \nu, \boldsymbol{u}_{\star}, \theta_{\star}, \boldsymbol{f}, g, \alpha).
            \end{align*}
            Hence, $\|p\|_{0,\Omega} \lesssim C(\kappa, \nu, \boldsymbol{u}_{\star}, \theta_{\star}, \boldsymbol{f}, g, \alpha). $
  		\end{proof}
Let us denote $\mathrm{I}_h$ be the interpolation operator. Under usual assumptions, the following approximation property holds.
\begin{lemma}\label{error}
   Let $C_7>0$ and $C_{8}>0$ be constants, independent of $h$, such that for each $\boldsymbol{u} \in \textbf{\textit{H}}^{l+1}(K)$ with $0 \leq l \leq k$, there holds
$$
\begin{gathered}
\left\|\boldsymbol{u}-\mathrm{I}_h \boldsymbol{u} \right\|_{\textbf{\textit{L}}^2(K)} \leq C_7 \frac{h_K^{l+2}}{\rho_K}|\boldsymbol{u}|_{\textbf{\textit{H}}^{l+1}(K)} \leq \hat{C}_1 h_K^{l+1}|\boldsymbol{u}|_{\textbf{\textit{H}}^{l+1}(K)}, \\[4pt]
\left|\boldsymbol{u}-\mathrm{I}_h \boldsymbol{u} \right|_{\textbf{\textit{H}}^1(K)} \leq C_{8} \frac{h_K^{l+2}}{\rho_K^2}|\boldsymbol{u}|_{\textbf{\textit{H}}^{l+1}(K)} \leq \hat{C}_2 h_K^l|\boldsymbol{u}|_{\textbf{\textit{H}}^{l+1}(K)},
\end{gathered}
$$
where $h_K$ is the diameter of $K, \rho_K$ is the diameter of the largest sphere contained in $K$, and $k$ is the degree of the polynomial. The constants $\hat{C}_1$ and $\hat{C}_2$ are defined as
$$\hat{C}_1 = C_7\frac{ h_K}{\rho_K} \leq C_7 \hat{\sigma} \ \ \text{ and } \ \ \hat{C}_2 = C_{8} \frac{ h_K^2}{\rho_K^2} \leq C_{8} \hat{\sigma}^2. $$
Here $\hat{\sigma}$ denotes the shape regularity constant.
\end{lemma}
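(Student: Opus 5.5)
The estimate in Lemma~\ref{error} is the classical local interpolation estimate for Lagrange interpolation (or a Scott--Zhang type quasi-interpolant when only $H^1$ regularity, $l=0$, is available). I would prove it by the Bramble--Hilbert argument on a reference element, followed by scaling.

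Let $\hat K$ be the reference simplex and $F_K(\hat{x}) = B_K\hat{x}+b_K$ the affine map onto $K$. The standard geometric bounds are $\|B_K\|\le h_K/\hat\rho$, $\|B_K^{-1}\|\le \hat h/\rho_K$ and $|\det B_K| \simeq |K|$. Set $\hat{\boldsymbol{u}} = \boldsymbol{u}\circ F_K$. The interpolant commutes with the pullback: $\widehat{\mathrm{I}_h\boldsymbol{u}} = \hat{\mathrm{I}}\hat{\boldsymbol{u}}$, where $\hat{\mathrm{I}}$ is the reference interpolant onto $\mathbb{P}_k(\hat K)$. The argument has four steps.

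\begin{enumerate}
\item \textbf{Reference estimate.} The operator $\mathrm{Id}-\hat{\mathrm{I}}$ is bounded from $\boldsymbol{H}^{l+1}(\hat K)$ to $\boldsymbol{H}^1(\hat K)$. For $l\ge 1$ (and $d\le3$) this uses the embedding $H^{l+1}\hookrightarrow C^0$; for $l=0$ one uses the quasi-interpolant variant. Moreover, $\mathrm{Id}-\hat{\mathrm{I}}$ annihilates $\mathbb{P}_l\subset\mathbb{P}_k$. The Bramble--Hilbert lemma (Deny--Lions) then gives
\[
\|\hat{\boldsymbol{u}}-\hat{\mathrm{I}}\hat{\boldsymbol{u}}\|_{\boldsymbol{H}^1(\hat K)}\le \hat C\,|\hat{\boldsymbol{u}}|_{\boldsymbol{H}^{l+1}(\hat K)}.
\]
\item \textbf{Scaling.} Use the usual change-of-variables inequalities:
\[
|\boldsymbol{v}|_{\boldsymbol{H}^m(K)}\le C\|B_K^{-1}\|^m|\det B_K|^{1/2}|\hat{\boldsymbol{v}}|_{\boldsymbol{H}^m(\hat K)}
\quad\text{and}\quad
|\hat{\boldsymbol{v}}|_{\boldsymbol{H}^{m}(\hat K)}\le C\|B_K\|^{m}|\det B_K|^{-1/2}|\boldsymbol{v}|_{\boldsymbol{H}^m(K)}.
\]
Taking $m=0$ and $m=1$ for the error, and $m=l+1$ for $\boldsymbol{u}$, yields $\|\boldsymbol{u}-\mathrm{I}_h\boldsymbol{u}\|_{L^2(K)}\lesssim h_K^{l+1}|\boldsymbol{u}|_{H^{l+1}(K)}$ and $|\boldsymbol{u}-\mathrm{I}_h\boldsymbol{u}|_{H^1(K)}\lesssim \rho_K^{-1}h_K^{l+1}|\boldsymbol{u}|_{H^{l+1}(K)}$.
\item \textbf{Matching the stated form.} To write these with the explicit factors $h_K^{l+2}/\rho_K$ and $h_K^{l+2}/\rho_K^2$ of the statement, multiply and divide by $h_K/\rho_K\ge1$. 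This only weakens the bounds, so the displayed first inequalities hold with suitable $C_7$ and $C_8$.
\item \textbf{Shape regularity.} Insert $h_K/\rho_K\le\hat\sigma$. This gives $\hat C_1=C_7h_K/\rho_K\le C_7\hat\sigma$ and $\hat C_2=C_8h_K^2/\rho_K^2\le C_8\hat\sigma^2$, which produces the second inequalities $\hat C_1h_K^{l+1}$ and $\hat C_2h_K^l$.
\end{enumerate}

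The only delicate point is the low-regularity case $l=0$, where the nodal interpolant is not defined on $H^1$. There I would take $\mathrm{I}_h$ to be the Scott--Zhang operator. Its estimate is local on the patch $\omega_K$ rather than on $K$ itself, so the right-hand side becomes $|\boldsymbol{u}|_{H^{1}(\omega_K)}$. This is harmless, since shape regularity bounds the number of elements in a patch and makes their sizes comparable.
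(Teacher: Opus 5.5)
Your argument is correct and is the standard Bramble--Hilbert plus affine-scaling proof, which is all the paper relies on (its proof just cites \cite{MR2373954}). You are also right that the stated factors $h_K^{l+2}/\rho_K$ and $h_K^{l+2}/\rho_K^2$ are the sharp bounds weakened by $h_K/\rho_K\ge 1$, and your $l=0$ caveat is well taken: the nodal interpolant needs $H^{l+1}\hookrightarrow C^0$, i.e.\ $l+1>d/2$, so for $l=0$ one must use a quasi-interpolant whose right-hand side is patch-local, a qualification the paper's statement omits.
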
 
\begin{proof}
    See \cite{MR2373954}.
\end{proof}
  		\section{A priori error bounds}\label{section4}
  		This section aims to establish the convergence of Nitsche's scheme \eqref{E} and determine the convergence rate. We derive the corresponding C\`{e}a's estimate. 
  		\begin{theorem}\label{RR1}
  			Assume that
  			\begin{align}\label{57}
  			\| \boldsymbol{u} \|_{1}    + \| \theta \|_{1} \leq \frac{\hat{\alpha}}{4},
  			\end{align}
  			with $\hat{\alpha}$ being the positive constant in Theorem~\ref{51}. Let $(\boldsymbol{u},\theta, p) \in$ $\mathrm{V} \times \mathrm{M} \times \Pi$ and $\left(\boldsymbol{u}_h,\theta_h,p_h \right) \in \mathrm{V}_h \times \mathrm{M}_h \times\mathrm{\Pi}_h$ be the unique solutions of problems \eqref{bb} and \eqref{E}, respectively. Then, we have 
  			\begin{align}\label{58}
  			\left\|\left( \boldsymbol{u}-\boldsymbol{u}_h, \theta - \theta_h, p - p_h \right)\right\| \lesssim  \inf _{\boldsymbol{0} \neq  \left(\boldsymbol{v}_h,\varphi_h,q_h \right) \in \mathrm{V}_h \times \mathrm{M}_h \times  \mathrm{\Pi}_h}\left\|\left(\boldsymbol{u}-\boldsymbol{v}_h, \varphi - \varphi_h, p-q_h \right)\right\|.
  			\end{align}	
  		\end{theorem}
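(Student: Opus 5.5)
We follow the standard Céa-type argument for nonlinear saddle-point problems. It combines Galerkin orthogonality (from the consistency of the symmetric Nitsche formulation) with the discrete inf-sup stability \eqref{35} of the linearised form $\mathcal{A}_{\boldsymbol{w}_h}$.

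\emph{Step 1: consistency.} First we check that the exact solution $(\boldsymbol{u},\theta,p)$, assumed regular enough for the boundary traces $\varepsilon(\boldsymbol{u})\boldsymbol{n}$, $p$ and $\nabla\theta\cdot\boldsymbol{n}$ to make sense on faces, satisfies \eqref{E} when tested with discrete functions. Integrating by parts elementwise and using the boundary conditions in \eqref{nsstokes0}, the added Nitsche terms cancel exactly: $\boldsymbol{u}\cdot\boldsymbol{n}=0$ on $\Gamma_W$, $\boldsymbol{u}=\boldsymbol{u}_\star$ and $\theta=\theta_\star$ on $\Gamma_I$ (matched by $F_v,F_q,F_\theta$). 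This gives the orthogonality
\[
\mathcal{A}_{\boldsymbol{u}_h}\big[(\boldsymbol{u}-\boldsymbol{u}_h,\theta-\theta_h,p-p_h);(\boldsymbol{v}_h,\varphi_h,q_h)\big]
= -O_S^h(\boldsymbol{u}-\boldsymbol{u}_h;\boldsymbol{u},\boldsymbol{v}_h)-O_T(\boldsymbol{u}-\boldsymbol{u}_h;\theta,\varphi_h)+D(\theta-\theta_h,\boldsymbol{v}_h)+\mathcal{N}.
\]
Here $\mathcal{N}$ collects the difference of the nonlinear outlet terms $\psi(\boldsymbol{u}\cdot\boldsymbol{n})$ versus $\psi(\boldsymbol{u}_h\cdot\boldsymbol{n})$.

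\emph{Step 2: split the error.} For arbitrary $(\boldsymbol{v}_h,\varphi_h,q_h)$ we write
\[
\boldsymbol{u}-\boldsymbol{u}_h=(\boldsymbol{u}-\boldsymbol{v}_h)+(\boldsymbol{v}_h-\boldsymbol{u}_h),
\]
and similarly for $\theta$ and $p$. The discrete part $\boldsymbol{e}_h=(\boldsymbol{v}_h-\boldsymbol{u}_h,\varphi_h-\theta_h,q_h-p_h)$ is then estimated with \eqref{35} applied to $\boldsymbol{w}_h=\boldsymbol{u}_h$. This is admissible because $\boldsymbol{u}_h\in\boldsymbol{K}_h$ by Theorem~\ref{van}. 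We obtain
\[
\tfrac{\hat\alpha}{2}\|\boldsymbol{e}_h\|\le\sup\frac{\mathcal{A}_{\boldsymbol{u}_h}[\boldsymbol{e}_h;\cdot]}{\|\cdot\|}
=\sup\frac{\mathcal{A}_{\boldsymbol{u}_h}[(\boldsymbol{v}_h-\boldsymbol{u},\varphi_h-\theta,q_h-p);\cdot]+\text{(RHS of Step 1)}}{\|\cdot\|}.
\]
The first term is bounded by the continuity estimates of Theorem~\ref{qa4}. These must be extended to $\mathrm{V}_h+\mathrm{V}$ in a mesh-dependent norm that also controls $h_e^{1/2}\|\varepsilon(\cdot)\boldsymbol{n}\|_{0,E}$, since the inverse trace inequality is unavailable on the continuous part. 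Together with $\|\boldsymbol{u}_h\|_{1,h}\le \hat\alpha/2$, this bounds the first term by a constant times the interpolation error.

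\emph{Step 3: absorb the nonlinear terms.} The convective remainders are bounded by
\[
C(\|\boldsymbol{u}\|_1+\|\theta\|_1)\,\|(\boldsymbol{u}-\boldsymbol{u}_h,\theta-\theta_h)\|.
\]
We bound the outlet term $\mathcal{N}$ through the Lipschitz continuity and boundedness of $\psi$, together with trace and Sobolev embeddings. This yields a contribution of the same form, with the constant depending on $\|\psi\|_{L^\infty}$ and on the Lipschitz constant of $\psi$. The term $D(\theta-\theta_h,\cdot)$ carries $\alpha\|\boldsymbol{f}\|$, which is small under the data assumption \eqref{53}. Splitting again $\boldsymbol{u}-\boldsymbol{u}_h=(\boldsymbol{u}-\boldsymbol{v}_h)+(\boldsymbol{v}_h-\boldsymbol{u}_h)$ and invoking \eqref{57}, the coefficient in front of $\|\boldsymbol{e}_h\|$ is at most $\hat\alpha/4$. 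This part is absorbed into the left-hand side, leaving $\tfrac{\hat\alpha}{4}\|\boldsymbol{e}_h\|\lesssim\|(\boldsymbol{u}-\boldsymbol{v}_h,\theta-\varphi_h,p-q_h)\|$. The triangle inequality and the arbitrariness of $(\boldsymbol{v}_h,\varphi_h,q_h)$ then give \eqref{58}.

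The main obstacle is Step 3, specifically the nonlinear outlet term $\mathcal{N}$ together with the $D$ coupling. The quantity $\theta\,\psi(\boldsymbol{u}\cdot\boldsymbol{n})-\theta_h\psi(\boldsymbol{u}_h\cdot\boldsymbol{n})$ involves products of traces, which require $L^4(\Gamma_O)$-type embeddings. Making the absorption constant fit under $\hat\alpha/4$ may require enlarging the smallness hypothesis to include $\|\psi\|_{L^\infty}$, the Lipschitz constant of $\psi$, and $\alpha\|\boldsymbol{f}\|$. I would first try to handle it by adding and subtracting $\theta\,\psi(\boldsymbol{u}_h\cdot\boldsymbol{n})$ and bounding the two resulting pieces separately.
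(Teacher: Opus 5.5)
Your proposal is essentially the paper's argument: Galerkin orthogonality, the splitting $\boldsymbol{e}=\xi+\chi$, the discrete inf-sup bound \eqref{35} for $\mathcal{A}_{\boldsymbol{u}_h}$ with $\boldsymbol{u}_h\in\boldsymbol{K}_h$, the continuity bounds of Theorem~\ref{qa4}, absorption of $\chi_{\boldsymbol{u}}$ via \eqref{57}, and the triangle inequality; your three caveats are genuine points the paper's proof glosses over, since it splits $O_T$ as if it were linear in its first argument (ignoring $\psi$), keeps $-D(\chi_\theta,\boldsymbol{v}_h)$ on the left of \eqref{61} but then invokes an inf-sup bound for $\mathcal{A}_{\boldsymbol{u}_h}$, which contains no $D$ (so that coupling is silently dropped), and applies the discrete continuity bounds to the non-discrete interpolation errors. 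Your augmented norm and extra smallness are the honest cost of closing these steps, and the only genuinely new smallness concerns $\alpha\|\boldsymbol{f}\|$ (in fact only the product $\alpha\|\boldsymbol{f}\|\,\|\theta\|_1$ if you first bound $\chi_\theta$ from the temperature block and substitute), because the outlet remainder $\int_{\Gamma_O}[G(\boldsymbol{u}\cdot\boldsymbol{n})-G(\boldsymbol{u}_h\cdot\boldsymbol{n})]\,\theta\,\varphi_h\,ds$ with $G(s)=s\,\psi(s)$ carries the factor $\theta$ and is absorbed under \eqref{57} up to a $\psi$-dependent constant.
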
 
  		\begin{proof}
  			In order to simplify the subsequent analysis, we define $\boldsymbol{e}_{\mathrm{u}} = \boldsymbol{u}-\boldsymbol{u}_h, \boldsymbol{e}_{\theta} = \theta -\theta_h,$ and  $\boldsymbol{e}_p = p - p_h$, and for any $\left( \boldsymbol{z}_h, \phi_h, \zeta_h \right) \in \mathrm{V}_h \times \mathrm{M}_h \times \mathrm{\Pi}_h $, we write
  			\begin{align}\label{error_d}
  			\boldsymbol{e}_{\boldsymbol{u}}&=\xi_{\boldsymbol{u}}+\chi_{\boldsymbol{u}}=\left(\boldsymbol{u}-\boldsymbol{z}_h\right)+\left(\boldsymbol{z}_h-\boldsymbol{u}_h\right), \nonumber \\
            \boldsymbol{e}_{\theta}&=\xi_{\theta}+\chi_{\theta}=\left(\theta -\phi_h\right)+\left(\phi_h-\theta _h\right), 
            \\   \boldsymbol{e}_p&=\xi_p+\chi_p=\left(p-\zeta_h\right)+\left(\zeta_h-p_h\right) \nonumber .
  			\end{align}
  			By recalling the definition of the bilinear forms, we observe the validity of following identities:
  			$$
  			\mathcal{C}_h\left[( \boldsymbol{u},\theta,p);(\boldsymbol{v}_h,\varphi_h,q_h)\right] - D(\theta, \boldsymbol{v}_h)+O_h^S(\boldsymbol{u} ; \boldsymbol{u}, \boldsymbol{v}_h) +O_T( \boldsymbol{u}; \theta, \varphi_h)= \mathcal{F}(\boldsymbol{v}_h, \varphi_h, q_h),
  			$$
  			and
  			$$
  			\mathcal{C}_h\left[( \boldsymbol{u}_h,\theta_h,p_h);(\boldsymbol{v}_h,\varphi_h,q_h)\right] - D(\theta_h, \boldsymbol{v}_h)+O_h^S(\boldsymbol{u}_h ; \boldsymbol{u}_h, \boldsymbol{v}_h) +O_T( \boldsymbol{u}_h; \theta_h, \varphi_h)= \mathcal{F}(\boldsymbol{v}_h, \varphi_h, q_h), 
  			$$
  			for all $(\boldsymbol{v}_h,\varphi_h,q_h) \in \mathrm{V}_h \times \mathrm{M}_h \times \Pi_h$. Based on these observations, we deduce the Galerkin orthogonality property
     \begin{align}\label{galerkin_ortho}
\mathcal{C}_h\left[(\boldsymbol{e}_{\boldsymbol{u}}, \boldsymbol{e}_{\theta}, \boldsymbol{e}_p);( \boldsymbol{v}_h, \varphi_h, q_h)\right]  - D (\boldsymbol{e}_{\theta}, \boldsymbol{v}_h) + \left[O_S^h\left(\boldsymbol{u}; \boldsymbol{u}, \boldsymbol{v}_h \right) -O_
S^h \left(\boldsymbol{u}_h; \boldsymbol{u}_h, \boldsymbol{v}_h \right)\right] + \left[O_T\left(\boldsymbol{u}; \theta , \varphi_h \right) \nonumber \right. \\ \quad \left.  - O_T \left(\boldsymbol{u}_h; \theta_h, \varphi_h \right)\right] = 0,  \forall \left( \boldsymbol{v}_h, \varphi_h, q_h \right) \in \mathrm{V}_h \times \mathrm{M}_h \times {\Pi}_h.
\end{align}
     Now, using  \eqref{error_d}, and \eqref{galerkin_ortho}, we deduce that for all $\left( \boldsymbol{v}_h,\varphi_h, q_h \right) \in \mathrm{V}_h \times \mathrm{M}_h \times \mathrm{\Pi}_h $, the following relationship holds
     $$
     \begin{aligned}
& \mathcal{A}_{\boldsymbol{u}_{\mathrm{h}}}\left[\left( \chi_{\boldsymbol{u}},\chi_{\theta},\chi_p\right);\left(\boldsymbol{v}_h,\varphi_h,q_h \right)\right] - D(\chi_{\theta}, \boldsymbol{v}_h) \\ 
& \hspace{-4mm} = 
  \mathcal{C}_h \left[\left( \chi_{\boldsymbol{u}}, \chi_{\theta}, \chi_p \right), \left( \boldsymbol{v}_h, \varphi_h, q_h \right)\right] - D(\chi_{\theta}, \boldsymbol{v}_h) + O_S^h\left(\boldsymbol{u}_h; \chi_{\boldsymbol{u}}, \boldsymbol{v}_h\right) + O_T(\boldsymbol{u}_h; \chi_{\theta}, \varphi_h ) \\
  & \hspace{-4mm} = - \mathcal{C}_h\left[\left( \xi_{\boldsymbol{u}}, \xi_{\theta}, \xi_p \right); \left( \boldsymbol{v}_h, \varphi_h, q_h\right)\right] +
  \mathcal{C}_h\left[\left( \boldsymbol{e}_{\boldsymbol{u}}, \boldsymbol{e}_{\theta}, \boldsymbol{e}_p \right); \left( \boldsymbol{v}_h, \varphi_h, q_h\right)\right] + D(\xi_{\theta}, \boldsymbol{v}_h) - D(\boldsymbol{e}_{\theta}, \boldsymbol{v}_h) \\
  & + O_S^h\left(\boldsymbol{u}_h; \chi_{\boldsymbol{u}}, \boldsymbol{v}_h\right) + O_T(\boldsymbol{u}_h; \chi_{\theta}, \varphi_h ) \\
  & \hspace{-4mm} =- \mathcal{C}_h\left[\left( \xi_{\boldsymbol{u}}, \xi_{\theta}, \xi_p \right); \left( \boldsymbol{v}_h, \varphi_h,q_h\right)\right] + D(\xi_{\theta}, \boldsymbol{v}_h) - \left[O_S^h\left(\boldsymbol{u}; \boldsymbol{u}, \boldsymbol{v}_h \right) -O_
S^h \left(\boldsymbol{u}_h; \boldsymbol{u}_h, \boldsymbol{v}_h \right)\right] \\ 
& - \left[O_T\left(\boldsymbol{u}; \theta , \varphi_h \right) - O_T \left(\boldsymbol{u}_h; \theta_h, \varphi_h \right)\right] + O_S^h\left(\boldsymbol{u}_h; \chi_{\boldsymbol{u}}, \boldsymbol{v}_h\right) + O_T(\boldsymbol{u}_h; \chi_{\theta}, \varphi_h ) \\
  & \hspace{-4mm} =- \mathcal{C}_h\left[\left( \xi_{\boldsymbol{u}}, \xi_{\theta}, \xi_p \right); \left( \boldsymbol{v}_h, \varphi_h,q_h\right)\right] + D(\xi_{\theta}, \boldsymbol{v}_h) - \left[O_S^h\left(\boldsymbol{u} - \boldsymbol{u}_h; \boldsymbol{u}, \boldsymbol{v}_h \right) \right.  \\ & \quad \left. + O_S^h\left( \boldsymbol{u}_h; \boldsymbol{u}, \boldsymbol{v}_h\right) -O_
S^h \left(\boldsymbol{u}_h; \boldsymbol{u}_h, \boldsymbol{v}_h \right)\right]  - \left[O_T\left(\boldsymbol{u} - \boldsymbol{u}_h; \theta , \varphi_h \right) \right.  \\ & \quad \left. +O_T\left(\boldsymbol{u}_h; \theta, \varphi_h \right)  - O_T \left(\boldsymbol{u}_h; \theta_h, \varphi_h \right)\right] + O_S^h\left(\boldsymbol{u}_h; \chi_{\boldsymbol{u}}, \boldsymbol{v}_h\right) + O_T(\boldsymbol{u}_h; \chi_{\theta}, \varphi_h )  \\
  & \hspace{-4mm} =- \mathcal{C}_h\left[\left( \xi_{\boldsymbol{u}}, \xi_{\theta}, \xi_p \right); \left( \boldsymbol{v}_h, \varphi_h,q_h\right)\right] + D(\xi_{\theta}, \boldsymbol{v}_h) - O_S^h\left(\xi_{\boldsymbol{u}} + \chi_{\boldsymbol{u}}; \boldsymbol{u}, \boldsymbol{v}_h \right)  \\ & \quad - O_S^h\left( \boldsymbol{u}_h; \xi_{\boldsymbol{u}} + \chi_{\boldsymbol{u}}, \boldsymbol{v}_h\right) + O_S^h\left(\boldsymbol{u}_h; \chi_{\boldsymbol{u}}, \boldsymbol{v}_h\right) -  O_T\left(\xi_{\boldsymbol{u}} + \chi_{\boldsymbol{u}}; \theta , \varphi_h \right) \\& \quad - O_T\left(\boldsymbol{u}_h; \xi_{\theta} + \chi_{\theta}, \varphi_h \right) + O_T(\boldsymbol{u}_h; \chi_{\theta}, \varphi_h )  \\
  & \hspace{-4mm} =- \mathcal{C}_h\left[\left( \xi_{\boldsymbol{u}}, \xi_{\theta}, \xi_p \right); \left( \boldsymbol{v}_h, \varphi_h,q_h\right)\right] + D(\xi_{\theta}, \boldsymbol{v}_h) - O_S^h\left(\xi_{\boldsymbol{u}}; \boldsymbol{u}, \boldsymbol{v}_h \right) - O_S^h\left(\chi_{\boldsymbol{u}}; \boldsymbol{u}, \boldsymbol{v}_h \right) \\ & \quad   - O_S^h\left( \boldsymbol{u}_h; \xi_{\boldsymbol{u}}, \boldsymbol{v}_h\right) -  O_T\left( \xi_{\boldsymbol{u}}; \theta , \varphi_h \right) -  O_T\left( \chi_{\boldsymbol{u}}; \theta , \varphi_h \right)  - O_T\left(\boldsymbol{u}_h;  \xi_{\theta}, \varphi_h \right)
\end{aligned}
$$
 which, together with the definition of $\mathcal{C}_h$ given in equation \eqref{aa}, implies
\begin{align}\label{61}
  \mathcal{A}_{\boldsymbol{u}_{\mathrm{h}}}\left[\left( \chi_{\boldsymbol{u}}, \chi_{\theta}, \chi_p \right); \left( \boldsymbol{v}_h, \varphi_h, q_h \right)\right] - D(\chi_{\theta}, \boldsymbol{v}_h)  & = -{A}_S^h(\xi_{\boldsymbol{u}}, \boldsymbol{v}_h) - {B}^h(\boldsymbol{v}_h,\xi_{p}) - {B}^h(\xi_{\boldsymbol{u}}, q_h) -  {A}^h_T(\xi_{\theta},\varphi_h) + D(\xi_{\theta}, \boldsymbol{v}_h) \nonumber \\ & \quad  - O_S^h\left(\xi_{\boldsymbol{u}}; \boldsymbol{u}, \boldsymbol{v}_h \right) - O_S^h\left(\chi_{\boldsymbol{u}}; \boldsymbol{u}, \boldsymbol{v}_h \right) - O_S^h\left( \boldsymbol{u}_h; \xi_{\boldsymbol{u}}, \boldsymbol{v}_h\right) \nonumber  \\ & \quad -  O_T\left( \xi_{\boldsymbol{u}}; \theta , \varphi_h \right)   -  O_T\left( \chi_{\boldsymbol{u}}; \theta , \varphi_h \right) - O_T\left(\boldsymbol{u}_h;  \xi_{\theta}, \varphi_h \right), 
\end{align}
for all $\left( \boldsymbol{v}_h,\varphi_h,q_h\right) \in \mathrm{V}_h \times \mathrm{M}_h \times \mathrm{\Pi}_h$. Next, utilizing the discrete inf-sup condition \eqref{37} at the left hand side of \eqref{61}, and applying the continuity properties of $A_S^h, A_T^h, B^h, D, O_T, O_S^h$ stated in Theorem~\ref{qa4} to the right hand side of \eqref{61}, we can derive
   \begin{align*}
  & \left\|\chi_{\boldsymbol{u}}\right\|_{1,h} +   \left\|\chi_{\theta}\right\|_{1,h} + \left\|\chi_p\right\|_{0} \\
  & \hspace{-4mm} \lesssim \frac{2}{\hat{\alpha} \| (\boldsymbol{v}_h, \varphi_h, q_h) \|} \bigg( \|\xi_{\boldsymbol{u}}\|_{1,h} \|\boldsymbol{v}_h\|_{1,h} + \|\boldsymbol{v}_h\|_{1,h} \| \xi_p\|_{0} + \|\xi_{\boldsymbol{u}}\|_{1,h} \|q_h\|_{0} + \| \xi_{\theta}\|_{1,h} \| \varphi_h \|_{1,h} \\& \quad + \| \xi_{\theta} \|_{1,h}   \|\boldsymbol{v}_h\|_{1,h}  + \|\xi_{\boldsymbol{u}}\|_{1,h} \|\boldsymbol{v}_h\|_{1,h} \| \boldsymbol{u} \|_{1}  + \|\chi_{\boldsymbol{u}}\|_{1,h} \|\boldsymbol{v}_h\|_{1,h} \| \boldsymbol{u} \|_{1}  + \|\xi_{\boldsymbol{u}}\|_{1,h} \|\boldsymbol{v}_h\|_{1,h} \| \boldsymbol{u}_h \|_{1,h} \\
  &\quad  + \| \xi_{\boldsymbol{u}} \|_{1,h} \| \theta \|_{1} \| \varphi_h \|_{1,h} + \| \chi_{\boldsymbol{u}} \|_{1,h} \| \theta \|_{1} \| \varphi_h \|_{1,h} + \| \boldsymbol{u}_h \|_{1,h} \| \xi_{\theta} \|_{1,h} \| \varphi_h \|_{1,h} \bigg)  \\      
  & \hspace{-4mm} \lesssim \frac{2}{\hat{\alpha}} \bigg( \|\xi_{\boldsymbol{u}}\|_{1,h}  +  \| \xi_p\|_{0,\Omega} + \|\xi_{\boldsymbol{u}}\|_{1,h} + \| \xi_{\theta}\|_{1,h} + \| \xi_{\theta} \|_{1,h}  + \|\xi_{\boldsymbol{u}}\|_{1,h} \| \boldsymbol{u} \|_{1} + \|\chi_{\boldsymbol{u}}\|_{1,h} \| \boldsymbol{u} \|_{1}  \\
  &\quad  + \|\xi_{\boldsymbol{u}}\|_{1,h} \| \boldsymbol{u}_h \|_{1,h}  + \| \xi_{\boldsymbol{u}} \|_{1,h} \| \theta \|_{1,h}  + \| \chi_{\boldsymbol{u}} \|_{1,h} \| \theta \|_{1} + \| \boldsymbol{u}_h \|_{1} \| \xi_{\theta} \|_{1,h}  \bigg) \\
  & \hspace{-4mm} \lesssim \frac{2}{\hat{\alpha}} \bigg(  \| \xi_p\|_{0,\Omega} + \| \xi_{\theta} \|_{1,h} + \|\xi_{\boldsymbol{u}}\|_{1,h} \left( 1+ \| \boldsymbol{u} \|_{1} + \| \boldsymbol{u}_h \|_{1,h} +  \| \theta \|_{1,h} \right)  \\
  &\quad + \|\chi_{\boldsymbol{u}}\|_{1,h} \left(\| \boldsymbol{u} \|_{1}    + \| \theta \|_{1} \right) + \| \boldsymbol{u}_h \|_{1,h} \| \xi_{\theta} \|_{1,h}  \bigg),
\end{align*}
as well as
 \begin{align*}
& \left( 1- \frac{2}{\hat{\alpha}} \left(\| \boldsymbol{u} \|_{1}    + \| \theta \|_{1} \right) \right)  \left\|\chi_{\boldsymbol{u}}\right\|_{1,h} +  \left\|\chi_{\theta}\right\|_{1,h} + \left\|\chi_p\right\|_{0} \\
& \hspace{-4mm} \lesssim \frac{2}{\hat{\alpha}} \bigg(  \| \xi_p\|_{0,\Omega} + \| \xi_{\theta} \|_{1,h} +  \left( 1+ \| \boldsymbol{u} \|_{1} + \| \boldsymbol{u}_h \|_{1,h} +  \| \theta \|_{1,h} \right) \|\xi_{\boldsymbol{u}}\|_{1,h} + \| \boldsymbol{u}_h \|_{1,h} \| \xi_{\theta} \|_{1,h}  \bigg).
\end{align*}
  			Therefore, by taking into account the assumption \eqref{57} and the fact that $(\boldsymbol{u}_h, \theta_h) \in \boldsymbol{K}_h$, we can conclude
  			\begin{align}\label{63}
  			\left\|\chi_p\right\|_{0}+\left\|\chi_{\boldsymbol{u}}\right\|_{1,h} +\left\|\chi_{\theta}\right\|_{1,h} \lesssim \left(\left\|\xi_p\right\|_{0}+\left\|\xi_{\boldsymbol{u}}\right\|_{1,h} + \left\| \xi_{\theta}\right\|_{1,h}\right).
  			\end{align}
            In this way, from \eqref{error_d}, \eqref{63} and the triangle inequality we obtain
  			$$
  			\left\|\left(\boldsymbol{e}_p, \boldsymbol{e}_{\boldsymbol{u}},  \boldsymbol{e}_{\theta} \right)\right\| \leq\left\|\left(\chi_p, \chi_{\boldsymbol{u}}, \chi_{\theta}\right)\right\|+\left\|\left({\xi}_p, {\xi}_{\boldsymbol{u}}, \xi_{\theta}\right)\right\| \lesssim \left\|\left({\xi}_p, {\xi}_{\mathrm{u}}, \xi_{\theta}\right)\right\|.
  			$$
  		This, together with the fact that  $\left(\boldsymbol{z}_h,\zeta_h, \phi_h \right) \in \mathrm{V}_h \times \mathrm{\Pi}_h \times \mathrm{M}_h $ is arbitrary, concludes the proof.
  		\end{proof}
  		\begin{theorem}\label{101}
  			Let $(\boldsymbol{u},\theta, p) \in \mathrm{V} \times \mathrm{M} \times \Pi$ and $\left( \boldsymbol{u}_h, \theta_h, p_h\right) \in \mathrm{V}_h \times \mathrm{M}_h\times \mathrm{\Pi}_h$ denotes the unique solutions of the continuous problem \eqref{bb} and discrete problem \eqref{E}, respectively,  satisfying \eqref{57}. Suppose that
  			$(\boldsymbol{u},\theta,p) \in$ $  (\textbf{\textit{H}}^{l+1}(\Omega) \cap \mathrm{V}) \times ({H}^{l+1}(\Omega) \cap \mathrm{M}) \times ({H}^l(\Omega) \cap \Pi)$ with $l \geq 1$, Then we have 
  			$$
  			\left\|\left( \boldsymbol{u}-\boldsymbol{u}_h,\theta- \theta_h,p-p_h\right)\right\| \lesssim  h^{l}\left\{|\boldsymbol{u}|_{\textbf{\textit{H}}^{l+1}(\Omega)}+ |\theta|_{H^{l+1}(\Omega)}+|p|_{{H}^{l}(\Omega)}\right\}.
  			$$
  		\end{theorem}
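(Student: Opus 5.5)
The plan is to combine the Céa-type estimate of Theorem~\ref{RR1} with the local interpolation bounds of Lemma~\ref{error}. Since \eqref{58} holds with an infimum over all discrete triples, it suffices to exhibit one particular choice $(\boldsymbol{z}_h,\phi_h,\zeta_h)$. The choice will be $\boldsymbol{z}_h=\mathrm{I}_h\boldsymbol{u}$ (Lagrange/Scott--Zhang interpolant in $\mathrm{V}_h$), $\phi_h=\mathrm{I}_h\theta$ in $\mathrm{M}_h$, and $\zeta_h=\mathrm{I}_h p$ in $\Pi_h$, the latter of degree $k-1$. We then bound each piece of $\|(\xi_{\boldsymbol{u}},\xi_\theta,\xi_p)\|$ separately. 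Because the discrete spaces carry no boundary constraints, no compatibility condition on the interpolants is needed.

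The volume contributions are immediate. By Lemma~\ref{error}, summed over $K\in\mathcal{K}_h$ and using shape regularity,
\[
\|\varepsilon(\boldsymbol{u}-\mathrm{I}_h\boldsymbol{u})\|_{0,\Omega}\le|\boldsymbol{u}-\mathrm{I}_h\boldsymbol{u}|_{1,\Omega}\lesssim h^{l}|\boldsymbol{u}|_{H^{l+1}(\Omega)}, \qquad \|\nabla(\theta-\mathrm{I}_h\theta)\|_{0,\Omega}\lesssim h^l|\theta|_{H^{l+1}(\Omega)}.
\]
The same lemma, applied at degree $k-1$ and index $l-1$, gives $\|p-\mathrm{I}_hp\|_{0,\Omega}\lesssim h^{l}|p|_{H^l(\Omega)}$. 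Here $l\le k$ must be assumed implicitly.

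The main obstacle is the boundary terms $h_e^{-1}\|\xi_{\boldsymbol{u}}\|^2_{0,E}$ on $\mathcal{E}_I$, $h_e^{-1}\|\xi_{\boldsymbol{u}}\cdot\boldsymbol{n}\|_{0,E}^2$ on $\mathcal{E}_W$, and $h_e^{-1}\|\xi_\theta\|_{0,E}^2$ on $\mathcal{E}_I$, since Lemma~\ref{error} only controls volume norms. To handle them, I will use the local scaled trace inequality on the element $K_E$ containing $E$:
\[
\|w\|_{0,E}^2\lesssim h_K^{-1}\|w\|_{0,K_E}^2+h_K\|\nabla w\|_{0,K_E}^2, \qquad w\in H^1(K_E).
\]
Applying it with $w=\xi_{\boldsymbol{u}}$, noting $|\xi_{\boldsymbol{u}}\cdot\boldsymbol{n}|\le|\xi_{\boldsymbol{u}}|$, and using $h_e\simeq h_K$, gives
\[
h_e^{-1}\|\xi_{\boldsymbol{u}}\|_{0,E}^2\lesssim h_K^{-2}\|\xi_{\boldsymbol{u}}\|^2_{0,K_E}+|\xi_{\boldsymbol{u}}|_{1,K_E}^2\lesssim h_K^{2l}|\boldsymbol{u}|^2_{H^{l+1}(K_E)}.
\]
The same argument applies to $\xi_\theta$. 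Summing over boundary faces, each element being counted a bounded number of times, yields the $h^{2l}$ rate.

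If the norm $\|\cdot\|$ in \eqref{58} is read as also containing the normal-flux terms implicit in the continuity bounds for non-discrete arguments (e.g.\ $h_e\|\varepsilon(\xi_{\boldsymbol{u}})\boldsymbol{n}\|_{0,E}^2$, $h_e\|\nabla\xi_\theta\cdot\boldsymbol{n}\|_{0,E}^2$, $h_e\|\xi_p\|^2_{0,E}$), these are treated the same way. The trace inequality is applied to $\nabla\xi$ and needs $H^2$ regularity on $K_E$, which holds since $l\ge1$; it gives $h_K^{2l}|\cdot|^2_{H^{l+1}(K_E)}$, and analogously for $p$ with $|p|_{H^l}$. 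Collecting all terms, taking square roots, and inserting into \eqref{58} gives the claimed bound $h^l\{|\boldsymbol{u}|_{H^{l+1}}+|\theta|_{H^{l+1}}+|p|_{H^l}\}$.
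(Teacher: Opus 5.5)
Your proposal is correct and takes the same route as the paper, whose proof consists of inserting the interpolation estimates of Lemma~\ref{error} into the C\'{e}a-type bound of Theorem~\ref{RR1}. The parts you add are details the paper leaves implicit: the scaled trace-inequality treatment of the face terms in the mesh-dependent norm (including the flux terms needed for non-discrete arguments), and the remark that $l\le k$ must be tacitly assumed.
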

  		\begin{proof}
  			The conclusion can be easily obtained by directly applying Theorem~\ref{RR1} and Lemma~\ref{error}.
  		\end{proof}
\section{A posteriori error estimation}\label{section5}
For each  $K \in \mathcal{K}_h$ and
each $E \in \mathcal{E}_h$, we define the element-wise and facet-wise residuals as follows:
\begin{align*}
    \mathbf{R}_{1,K} & = \{ \alpha \theta_h \boldsymbol{f}_h + \nu \Delta \boldsymbol{u}_h - (\boldsymbol{u}_h \cdot \nabla )\boldsymbol{u}_h - \nabla p_h \}|_K, \\
    \mathbf{R}_{2,K} & = \{ g_h + \kappa \Delta \theta_h + \boldsymbol{u}_h \cdot \nabla \theta_h \}|_K, \\
  \mathbf{R}_{1,E}  &=
\begin{cases}
\dfrac{1}{2}
\llbracket
\bigl(p_h \boldsymbol{I}
- 2 \nu \varepsilon(\boldsymbol{u}_h)\bigr)\boldsymbol{n}
\rrbracket_E,
& \text{for } E \in \mathcal{E}_h \setminus \mathcal{E}_\Gamma,
\end{cases} \\
\mathbf{R}_{2,E}  &=
\begin{cases}
\dfrac{1}{2}
\llbracket
-\kappa  \nabla \theta_h \cdot \boldsymbol{n}
\rrbracket_E,
& \text{for } E \in \mathcal{E}_h \setminus \mathcal{E}_\Gamma ,
\end{cases} \\
\boldsymbol{J}_{1,K} &=
\begin{cases}
\left(\bigl(p_h \boldsymbol{I}
- 2 \nu \varepsilon(\boldsymbol{u}_h)\bigr)\boldsymbol{n} \right)_E,
& \text{for } E \in \mathcal{E}_I,\\ 
\bigl( \left[ \mathbb{T}(\boldsymbol{u}_h,p_h) \boldsymbol{n} \right]_{\tau} + \gamma \boldsymbol{u}_h \bigr)_E, & \text{for } E \in \mathcal{E}_W, \\
\bigl(\mathbb{T}(\boldsymbol{u}_h,p_h) \boldsymbol{n}\bigr)_E, & \text{for } E \in  \mathcal{E}_O,
\end{cases} \\
\mathbf{J}_{2,K}  &=
\begin{cases}
\left(-\kappa  \nabla \theta_h \cdot \boldsymbol{n}\right)_E, & \text{for } E \in \mathcal{E}_I,  \\
\bigl( \kappa  \nabla \theta_h \cdot \boldsymbol{n}  + \beta \theta_h \bigr)_E, & \text{for } E \in \mathcal{E}_W, \\
\bigl( \kappa  \nabla \theta_h \cdot \boldsymbol{n} - (\boldsymbol{u}_h \cdot \boldsymbol{n}) \theta_h \psi(\boldsymbol{u}_h \cdot \boldsymbol{n}) \bigr)_E, & \text{for } E \in \mathcal{E}_O,
\end{cases}
\end{align*} 
where $\boldsymbol{f}_h$ and $g_h$ be a piecewise polynomial approximation of $\boldsymbol{f}$ and $g$. We then introduce the element-wise error estimators \( \Psi_K^2 = \Psi_{R_K}^2 + \Psi_{R_E}^2 + \Psi_{J_K}^2 \), with the contributions defined as:
\begin{align*}
    \Psi_{R_K}^2 &= h_K^2 \bigl(\| \boldsymbol{R}_{1,K} \|_{0,K}^2 + \| \boldsymbol{R}_{2,K} \|_{0,K}^2 \bigr), \\
    \Psi_{R_E}^2 &= \sum_{E \in \partial K } h_E  \bigl(  \| \boldsymbol{R}_{1,E} \|_{0,E}^2 + \| \boldsymbol{R}_{2,E} \|_{0,E}^2   \bigr), \\
    \Psi_{J_1}^2 &= \sum_{E \in \mathcal{E}_I \cup \mathcal{E}_W \cup  \mathcal{E}_O} h_E   \| \boldsymbol{J}_{1,K} \|_{0,E}^2  + \sum_{E \in \mathcal{E}_W} h_E^{-1} \| \boldsymbol{u}_h \cdot \boldsymbol{n} \|^2_{0,E} + \sum_{E \in \mathcal{E}_I} h_E^{-1} \| \boldsymbol{u}_h - \boldsymbol{u}^{\star} \|^2_{0,E}, \\
    \Psi_{J_2}^2 &= \sum_{E \in \mathcal{E}_I \cup \mathcal{E}_W \cup  \mathcal{E}_O} h_E  \| \boldsymbol{J}_{2,K} \|_{0,E}^2 + \sum_{E \in \mathcal{E}_I} h_E^{-1} \| \theta_h - \theta^{\star} \|^2_{0,E}, \\
    \Psi_{J_K}^2 &= \Psi_{J_1}^2 + \Psi_{J_2}^2.
\end{align*}
The global a posteriori error estimator for the nonlinear stationary problem is given by:
\begin{align}\label{total_estimate}
    \Psi := \left( \sum_{K \in \mathcal{K}_h} \Psi_K^2 \right)^{1/2}.
\end{align}
For a fixed $\boldsymbol{\tilde{u}} \in \boldsymbol{H}^1(\Omega)$, we define the bilinear form $\mathcal{A}_h^{\boldsymbol{\tilde{u}}} \left( \cdot ; \cdot \right)$ as 
\begin{align*}
\mathcal{A}_h^{\mathrm{NS},\boldsymbol{\tilde{u}}} \left( \boldsymbol{u}_h,  p_h, \theta_h ;  \boldsymbol{v}_h,  q_h, \varphi_h \right) & = {A}_S^h(\boldsymbol{u}_h, \boldsymbol{v}_h) + {O}_S^h(\boldsymbol{\tilde{u}} ; \boldsymbol{u}_h, \boldsymbol{v}_h) + {B}^h(\boldsymbol{v}_h,p_h) - D(\theta_h,\boldsymbol{v}_h) + {B}^h(\boldsymbol{u}_h, q_h) \\ & \quad + {A}^h_T(\theta_h,\varphi_h)   + O_T(\boldsymbol{\tilde{u}}; \theta_h, \varphi_h).
\end{align*}
\subsection{Reliability}
\begin{theorem}[Global inf-sup stability]\label{inf_sup_global}
    Let $\boldsymbol{\tilde{u}} \in \boldsymbol{H}^1(\Omega)$ satisfy $\| \boldsymbol{\tilde{u}} \|_{1,\Omega} < M$, for
a sufficiently small $M > 0$. For any $(\boldsymbol{u}, p, \theta) \in \mathrm{V} \times \Pi  \times \mathrm{M}$, there exists $(\boldsymbol{v}, q, \varphi) \in \mathrm{V} \times \Pi  \times \mathrm{M}$ with $\left|\!\left|\!\left| (\boldsymbol{v}, q, \varphi) \right|\!\right|\!\right| \leq 1$ such
that
\[
\mathcal{A}_h^{\tilde{\boldsymbol{u}}}\left(\boldsymbol{u},p, \theta ; \boldsymbol{v},q , \varphi \right) \geq C \left|\!\left|\!\left| (\boldsymbol{u}, p, \theta )\right|\!\right|\!\right|.
\]
\end{theorem}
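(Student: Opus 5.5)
The plan is to treat $\mathcal{A}_h^{\tilde{\boldsymbol{u}}}$ as a perturbation of the linear block operator $\mathcal{C}_h$ of \eqref{aa} plus the coupling term $-D(\theta,\boldsymbol{v})$. I would proceed in three steps: an inf-sup bound for the linear part, an absorption of the convective terms by smallness of $\tilde{\boldsymbol{u}}$, and a final normalization.

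\emph{Step 1: the linear part.} Since $\mathrm{V}\subset \boldsymbol{H}^1(\Omega)$, all Nitsche boundary terms are well defined. Note that $\boldsymbol{u}\cdot\boldsymbol{n}=0$ on $\Gamma_W$ and $\boldsymbol{u}=\boldsymbol{u}_\star$ on $\Gamma_I$ for elements of $\mathrm{V}$, with the test space taken homogeneous. On these spaces the penalty and symmetric consistency terms reduce to quantities controlled by the trace theorem. I would split the argument as follows.
\begin{itemize}
\item[(a)] Coercivity of $A_S^h$ on the continuous divergence-free subspace follows from Korn's inequality, using the slip coefficient $\gamma$ and the control on $\Gamma_I$, exactly as in Lemma~\ref{S}.
\item[(b)] Coercivity of $A_T^h$ follows as in Lemma~\ref{coercivity_t}, with a Poincar\'e--Friedrichs inequality using $\theta=0$ on $\Gamma_I$ (or the $\beta$-term on $\Gamma_W$).
\item[(c)] The continuous inf-sup condition for $B^h$ is the classical Stokes inf-sup on $L^2(\Omega)$. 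This uses the outlet $\Gamma_O$, so that no mean-value constraint is needed.
\end{itemize}
Combining these by the Babu\v{s}ka--Brezzi argument of \cite[Proposition 2.36]{MR2050138}, as in Theorem~\ref{51}, gives a test triple $(\boldsymbol{v}_0,q_0,\varphi_0)$ with $\left|\!\left|\!\left|(\boldsymbol{v}_0,q_0,\varphi_0)\right|\!\right|\!\right|\lesssim \left|\!\left|\!\left|(\boldsymbol{u},p,\theta)\right|\!\right|\!\right|$ and
\[
\mathcal{C}_h[(\boldsymbol{u},\theta,p);(\boldsymbol{v}_0,\varphi_0,q_0)]\geq \hat\alpha\left|\!\left|\!\left|(\boldsymbol{u},p,\theta)\right|\!\right|\!\right|^2 .
\]
Concretely, I take $\boldsymbol{v}_0=\boldsymbol{u}+\delta\boldsymbol{w}_p$, where $\boldsymbol{w}_p$ realizes the inf-sup for $p$, and set $q_0=-p$ and $\varphi_0=\theta$.

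\emph{Step 2: the coupling and the convection.} The term $D(\theta,\boldsymbol{v})$ is not small. I would handle it through the one-way triangular structure, since $\theta$ enters the momentum equation only through $D$. Test the temperature equation with $\varphi_0=\lambda\theta$ for a large weight $\lambda$. Then the $\lambda C_T\|\theta\|_{1,h}^2$ contribution absorbs $|D(\theta,\boldsymbol{v}_0)|\le C_4\|\theta\|_{1,h}\|\boldsymbol{v}_0\|_{1,h}$ via Young's inequality. The convective terms $O_S^h(\tilde{\boldsymbol{u}};\boldsymbol{u},\boldsymbol{v}_0)$ and $O_T(\tilde{\boldsymbol{u}};\theta,\varphi_0)$ are bounded by Theorem~\ref{qa4}, extended to $\boldsymbol{H}^1$ with Sobolev embeddings $H^1\hookrightarrow L^4$ and trace $H^1\to L^4(\Gamma_O)$. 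This gives a bound $C M\left|\!\left|\!\left|(\boldsymbol{u},p,\theta)\right|\!\right|\!\right|^2$. The outlet term uses $\psi\in L^\infty$, and it is here that boundedness of $\psi$ matters.

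\emph{Step 3: conclusion.} Choosing $M$ so small that $CM$ is below half the constant from Steps 1--2 yields
\[
\mathcal{A}_h^{\tilde{\boldsymbol{u}}}(\boldsymbol{u},p,\theta;\boldsymbol{v}_0,q_0,\varphi_0)\gtrsim \left|\!\left|\!\left|(\boldsymbol{u},p,\theta)\right|\!\right|\!\right|^2 .
\]
Dividing by $\left|\!\left|\!\left|(\boldsymbol{v}_0,q_0,\varphi_0)\right|\!\right|\!\right|$ then gives the normalized triple $(\boldsymbol{v},q,\varphi)$ with norm at most one and the stated bound.

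The main obstacle is Step 1 in the continuous, non-discrete setting. There, inverse inequalities are unavailable, so the symmetric Nitsche terms such as $\int_{\Gamma_W}2\nu\varepsilon(\boldsymbol{v})\boldsymbol{n}\cdot\boldsymbol{n}\,(\boldsymbol{n}\cdot\boldsymbol{u})$ are not obviously bounded for general $H^1$ functions. My first attempt is to use that these terms vanish on $\mathrm{V}_0$ by the constraints, so that $A_S^h$ reduces to the standard Navier-slip form there. If that fails because $\mathrm{V}$ is affine, I would instead work with differences in $\mathrm{V}_0$ plus $\mathrm{V}_h$ and use the mesh-dependent norm only through discrete trace inequalities on the discrete component.
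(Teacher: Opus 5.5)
Your argument is correct and shares the paper's skeleton, but it reaches the statement by a partly different route. The skeleton is an inf-sup bound for the linear block, followed by absorption of the convective terms using the smallness of $\tilde{\boldsymbol{u}}$. The argument requires reading the data as homogeneous, as the paper stipulates, so that $\mathrm{V}=\mathrm{V}_0$ and $\mathrm{M}=\mathrm{M}_0$ are linear spaces.

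\textbf{What the paper does.} It only points to the intermediate steps of Theorem~\ref{qa7}: Theorem~\ref{51} (built from Lemma~\ref{S}, Lemma~\ref{infsup} and \cite[Proposition 2.36]{MR2050138}), followed by the perturbation bound \eqref{35}. That argument is purely discrete. It uses inverse trace inequalities, which fail on $\boldsymbol{H}^1(\Omega)$. Moreover, the form $\mathcal{A}_{\boldsymbol{w}_h}$ treated there has no coupling term, because $D(\psi_h,\cdot)$ sits on the right-hand side with $\psi_h$ frozen.

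\textbf{What you do differently.}
\begin{itemize}
\item You work directly on $\mathrm{V}_0\times\Pi\times\mathrm{M}_0$. There all Nitsche consistency and penalty terms, and the $h_e^{-1}$ parts of the norm, vanish. The linear part therefore reduces to the standard slip form, handled by Korn/Poincar\'e and the continuous Stokes inf-sup, which needs $\Gamma_O$ of positive measure.
\item You treat $-D(\theta,\boldsymbol{v})$ explicitly through the block-triangular structure. You weight the temperature test function by a large $\lambda$, so that $\lambda C_T\|\theta\|^2$ absorbs this term. This works because $O_T(\tilde{\boldsymbol{u}};\theta,\lambda\theta)$ carries the same factor $\lambda$, so the smallness condition on $M$ does not depend on $\lambda$.
\end{itemize}

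\textbf{What each route buys.} The paper's route is shorter and gives a bound uniform in $h$ for discrete triples. Yours actually proves the statement as posed: continuous spaces, and the form $\mathcal{A}_h^{\mathrm{NS},\tilde{\boldsymbol{u}}}$ including $D$. That is the version Lemma~\ref{SRL2} needs, since it is applied there to $(\boldsymbol{e}_c^{\boldsymbol{u}},e^p,\boldsymbol{e}_c^{\theta})\in\mathrm{V}_0\times\Pi\times\mathrm{M}_0$.

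\textbf{Two clean-ups.}
\begin{itemize}
\item Your opening claim that the Nitsche terms are well defined on $\mathrm{V}\subset\boldsymbol{H}^1(\Omega)$ is false, since traces of $\varepsilon(\boldsymbol{v})$ need not exist. State the argument on the homogeneous spaces from the outset instead of retracting the claim at the end.
\item Both the choice $\boldsymbol{v}_0=\boldsymbol{u}+\delta\boldsymbol{w}_p$ and the final normalization require a linear space. Working in the homogeneous setting from the start therefore makes your vaguer fallback through $\mathrm{V}_0+\mathrm{V}_h$ unnecessary.
\end{itemize}
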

\begin{proof}
The proof of the global inf-sup stability is already established in the intermediate steps of Theorem \ref{qa7}. Here, we state it as a separate theorem so that it can be easily used in the framework of a posteriori  analysis. 
\end{proof}

Since $\mathrm{V}_h$ and $\mathrm{M}_h$ are not conforming spaces, we define the conforming spaces $\mathrm{V}_h^c = \mathrm{V}_h \cap \mathrm{V}$ and $\mathrm{M}_h^c = \mathrm{M}_h \cap \mathrm{M}$. Finally, we decompose the approximate velocity and temperature  uniquely into $\boldsymbol{u}_h = \boldsymbol{u}_h^c + \boldsymbol{u}_h^r $ and $\theta_h = \theta_h^c + \theta_h^r $, where $(\boldsymbol{u}_h^c, \theta_h^c) \in \mathrm{V}_h^c \times \mathrm{M}_h^c  $ and $(\boldsymbol{u}_h^r, \theta_h^r)  \in (\mathrm{V}_h^c)^{\perp} \times (\mathrm{M}_h^c)^{\perp} $, and we note that $ \boldsymbol{u}_h^r = \boldsymbol{u}_h - \boldsymbol{u}_h^c \in \mathrm{V}_h $ and $\theta_h^r = \theta_h - \theta_h^c \in \mathrm{M}_h$.
 \begin{lemma}\label{SRL1}
     There holds 
     $$
   \| \boldsymbol{u}_h^r\|_{1,\mathcal{K}_h}+  \| \theta_h^r\|_{1,\mathcal{K}_h} \leq C_r \left( \sum_{K \in \mathcal{K}_h}  \Psi_{J_K}^2 \right)^{1/2}.
     $$
 \end{lemma}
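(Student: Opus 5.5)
The bound will follow from the standard Oswald/averaging-interpolation argument for nonconforming parts, as in Karakashian--Pascal and in the Nitsche-slip literature (e.g. \cite{MR4812237}). The key observation is that $\boldsymbol{u}_h^r$ is the $H^1$-orthogonal complement of $\boldsymbol{u}_h$ with respect to $\mathrm{V}_h^c$. Hence, for every $\boldsymbol{w}_h^c\in\mathrm{V}_h^c$,
\[
\|\boldsymbol{u}_h^r\|_{1,\mathcal{K}_h}=\inf_{\boldsymbol{w}_h^c\in \mathrm{V}_h^c}\|\boldsymbol{u}_h-\boldsymbol{w}_h^c\|_{1,\mathcal{K}_h}\le \|\boldsymbol{u}_h-\boldsymbol{w}_h^c\|_{1,\mathcal{K}_h},
\]
and the same holds for $\theta_h^r$ with $\mathrm{M}_h^c$. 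It therefore suffices to construct explicit conforming approximations $\boldsymbol{w}_h^c\in\mathrm{V}_h^c$ and $\phi_h^c\in\mathrm{M}_h^c$ whose distance to $\boldsymbol{u}_h$ and $\theta_h$ is controlled by the boundary-mismatch terms in $\Psi_{J_K}$. Since $\mathrm{V}_h,\mathrm{M}_h\subset C(\overline\Omega)$, there are no interior jumps, and only the boundary mismatch has to be corrected.

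For the temperature, I define $\phi_h^c$ by nodal modification. At Lagrange nodes on $\overline{\Gamma}_I$, I replace the value of $\theta_h$ by the value of a suitable interpolant of $\theta_\star$; I assume $\theta_\star$ is piecewise polynomial on $\mathcal{E}_I$, and otherwise a data-oscillation term appears. All other nodes are left unchanged. The difference $\theta_h-\phi_h^c$ is then supported in the layer of elements touching $\Gamma_I$. By a scaling argument on each such $K$, using equivalence of norms on $\mathbb{P}_k$ and shape regularity, I will show
\[
\|\nabla(\theta_h-\phi_h^c)\|_{0,K}^2\lesssim \sum_{E\subset\partial K\cap\Gamma_I} h_E^{-1}\|\theta_h-\theta_\star\|_{0,E}^2 .
\]
Nodes lying on $\partial\Gamma_I$ but outside $\Gamma_I$ are also handled by this scaling, since they belong to some face in $\mathcal{E}_I$. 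Summing over $K$ gives control by the corresponding part of $\Psi_{J_2}$. The $L^2$ part of $\|\cdot\|_{1,\mathcal{K}_h}$ is handled similarly, with an extra factor $h_K^2$, so it is dominated by the gradient part.

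For the velocity, I use the same nodal-correction construction. On $\Gamma_I$, I set nodal values to those of $\boldsymbol{u}_\star$, which gives control by $h_E^{-1}\|\boldsymbol{u}_h-\boldsymbol{u}_\star\|_{0,E}^2$. At nodes on $\Gamma_W$, I remove only the normal component: $\boldsymbol{w}_h^c(x_i)=\boldsymbol{u}_h(x_i)-(\boldsymbol{u}_h(x_i)\cdot\boldsymbol{n}_i)\boldsymbol{n}_i$, with $\boldsymbol{n}_i$ a nodal normal. The same scaling argument then bounds the correction by $\sum_{E\in\mathcal{E}_W}h_E^{-1}\|\boldsymbol{u}_h\cdot\boldsymbol{n}\|_{0,E}^2$.

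\textbf{Main obstacle.} The hard step is the wall condition at vertices where the normal is discontinuous, i.e.\ corners and edges of the polygonal $\Gamma_W$ and junctions with $\Gamma_I$ or $\Gamma_O$. There, membership in $\mathrm{V}$ requires $\boldsymbol{w}_h^c\cdot\boldsymbol{n}=0$ on every adjacent face. I plan to set $\boldsymbol{w}_h^c$ to zero at such nodes, or at a $\Gamma_I$ junction to the Dirichlet value. The nodal value $\boldsymbol{u}_h(x_i)$ is then bounded by $\sum_E|\boldsymbol{u}_h(x_i)\cdot\boldsymbol{n}_E|$ over the adjacent faces, because their normals span $\mathbb{R}^d$ at a genuine corner. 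On flat nodes, I use the face normal directly, so that $\boldsymbol{w}_h^c\cdot\boldsymbol{n}=0$ holds exactly face by face on polynomials. After these choices, scaling again yields the estimate. Adding the velocity and temperature bounds gives the claimed inequality, with $C_r$ depending only on shape regularity, $k$, and the corner geometry.
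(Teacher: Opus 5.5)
Your overall strategy is the standard argument that the paper's one-line proof hands off to the cited facet-residual reference. You identify $\|\boldsymbol{u}_h^r\|_{1,\mathcal{K}_h}$ and $\|\theta_h^r\|_{1,\mathcal{K}_h}$ with distances to the conforming sets, then build explicit conforming reconstructions whose defect is measured by the boundary-mismatch terms. Your temperature, inlet and flat-wall constructions are sound, with one small correction: in the local scaling bound you must sum over all faces of $\mathcal{E}_I$ (resp.\ $\mathcal{E}_W$) in the patch of $K$, not only over $E\subset\partial K$, because elements touching the boundary only at a vertex or edge also carry a correction.

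The step that fails is setting $\boldsymbol{w}_h^c(x_i)=\boldsymbol{0}$ at every node where the normal jumps. Membership in $\mathrm{V}$ constrains only the components of $\boldsymbol{w}_h^c(x_i)$ along normals of adjacent \emph{wall} faces. Likewise, the only nonconformity terms in $\Psi_{J_K}$ are $h_E^{-1}\|\boldsymbol{u}_h\cdot\boldsymbol{n}\|_{0,E}^2$ on $\mathcal{E}_W$ and $h_E^{-1}\|\boldsymbol{u}_h-\boldsymbol{u}_\star\|_{0,E}^2$ on $\mathcal{E}_I$. Two cases break:
\begin{itemize}
\item At a junction of $\Gamma_W$ with $\Gamma_O$, the outlet face imposes nothing, and $\boldsymbol{u}_h\cdot\boldsymbol{n}_O$ appears nowhere in the estimator.
\item At an edge of a polyhedral $\Gamma_W$ in 3D, the two adjacent normals span only a plane, so the component along the edge is neither constrained nor controlled.
\end{itemize}
Zeroing these nodes discards an $O(1)$ quantity: the outflow velocity at a 2D wall/outlet corner, or the slip velocity along a 3D wall edge. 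The resulting correction has $H^1$-norm of order one, both in 2D (a single nodal basis function) and in 3D (summed along the edge). It therefore does not tend to zero and cannot be bounded by $\Psi_{J_K}$. The bound $|\boldsymbol{u}_h(x_i)|\lesssim\sum_E|\boldsymbol{u}_h(x_i)\cdot\boldsymbol{n}_E|$ is useful only when the sum runs over wall faces whose normals span $\mathbb{R}^d$.

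The repair is to treat all wall nodes uniformly. For $x_i\in\overline{\Gamma}_W\setminus\overline{\Gamma}_I$, set $\boldsymbol{w}_h^c(x_i)=P_i\boldsymbol{u}_h(x_i)$, where $P_i$ is the orthogonal projector onto the orthogonal complement of $\mathrm{span}\{\boldsymbol{n}_E : E\in\mathcal{E}_W,\ x_i\in\overline{E}\}$. This choice has the following properties:
\begin{itemize}
\item It gives $\boldsymbol{w}_h^c\cdot\boldsymbol{n}_E\equiv 0$ on every wall face.
\item It coincides with your choice at flat nodes and at genuine corners.
\item It keeps the edge-tangential and outlet-normal components.
\item It satisfies $|\boldsymbol{u}_h(x_i)-P_i\boldsymbol{u}_h(x_i)|\le C\max_E|\boldsymbol{u}_h(x_i)\cdot\boldsymbol{n}_E|$, with $C$ depending only on the angles between the finitely many distinct wall normals.
\end{itemize}
Your scaling argument then closes. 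Note also that at nodes of $\overline{\Gamma}_I\cap\overline{\Gamma}_W$ the Dirichlet value is admissible only if $\boldsymbol{u}_\star(x_i)\cdot\boldsymbol{n}_E=0$ for the adjacent wall faces. This is automatic under the paper's homogeneity assumption $\boldsymbol{u}_\star=\boldsymbol{0}$. As you observe, for non-discrete data $\mathrm{V}_h^c$ and $\mathrm{M}_h^c$ would be empty anyway, so the statement only makes sense for homogeneous or discrete boundary data.
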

 \begin{proof}
     It follows straightforwadly from the decomposition $ \boldsymbol{u}_h = \boldsymbol{u}_h^r + \boldsymbol{u}_h^c$ and $\theta_h = \theta_h^c + \theta_h^r$ and from the facet residual as given in \cite{ MR2532864}. 
 \end{proof}    
 \begin{lemma}\label{SRL2}
    If $\| \boldsymbol{u}\|_{1,\infty} < M, \| \theta \|_{1,\infty} < M$ for sufficiently small $M$, then the following estimate holds:
  \begin{align*}
  \frac{C}{2} \left|\!\left|\!\left| ( \boldsymbol{e}^{\boldsymbol{u}} , e^p,  \boldsymbol{e}^{\theta}) \right|\!\right|\!\right| & \leq \int_{\Omega} g ( \varphi - \varphi_h )   - \mathcal{A}_h^{\mathrm{NS},\boldsymbol{u}_h}(\boldsymbol{u}_h,p_h, \theta_h; \boldsymbol{v}-\boldsymbol{v}_h,q, \varphi - \varphi_h ) + (1+C) C_r \left( \sum_{K \in \mathcal{K}_h} \Psi_{J_K}^2  \right)^{1/2} \\ & \quad +  \sum_{E\in \mathcal{E}_I}\bigg(\int_{E} 2 \nu \varepsilon(\boldsymbol{v}_h) \cdot \boldsymbol{u}_{\star} d s - \gamma_N \int_{E} {h_e}^{-1}(\boldsymbol{u}_{\star} \cdot \boldsymbol{v}_h) d s  \bigg) \\ & \quad + \sum_{E\in \mathcal{E}_I}\bigg(\int_{E} (\nabla \varphi_h \cdot \boldsymbol{n}) \theta_{\star} d s - \gamma_N \int_{E} {h_e}^{-1}(\theta_{\star} \varphi_h)d s \bigg),
 \end{align*}
  where $\boldsymbol{e}^{\boldsymbol{u}} := \boldsymbol{u} - \boldsymbol{u}_h$, $\boldsymbol{e}^{\theta} := \theta - \theta_h$, $e^p:= p-p_h$ and $\boldsymbol{v}_h$ denotes the Cl\'{e}ment interpolation \cite{MR400739} of $\boldsymbol{v} \in \mathrm{V}$.
\end{lemma}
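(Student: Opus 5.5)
Following the standard strategy for nonconforming Nitsche-type estimators (as in \cite{MR2532864}), I will split the error through the conforming/nonconforming decomposition introduced above: $\boldsymbol{e}^{\boldsymbol{u}} = (\boldsymbol{u}-\boldsymbol{u}_h^c) - \boldsymbol{u}_h^r$ and $\boldsymbol{e}^{\theta} = (\theta-\theta_h^c) - \theta_h^r$. The triangle inequality gives
\[
\left|\!\left|\!\left| ( \boldsymbol{e}^{\boldsymbol{u}} , e^p,  \boldsymbol{e}^{\theta}) \right|\!\right|\!\right| \leq \left|\!\left|\!\left| ( \boldsymbol{u}-\boldsymbol{u}_h^c , e^p,  \theta-\theta_h^c) \right|\!\right|\!\right| + \| \boldsymbol{u}_h^r\|_{1,\mathcal{K}_h}+  \| \theta_h^r\|_{1,\mathcal{K}_h},
\]
and the last two terms are bounded by $C_r(\sum_K \Psi_{J_K}^2)^{1/2}$ via Lemma~\ref{SRL1}.

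For the conforming part, I will apply Theorem~\ref{inf_sup_global} with frozen velocity $\tilde{\boldsymbol{u}} = \boldsymbol{u}_h$. This is admissible since $\|\boldsymbol{u}_h\|_{1,h}$ is small by Theorem~\ref{van} under the data assumption, and the smallness hypotheses on $\boldsymbol{u},\theta$ control the remaining convective terms. The theorem yields $(\boldsymbol{v},q,\varphi)$ with $\left|\!\left|\!\left| (\boldsymbol{v},q,\varphi) \right|\!\right|\!\right|\le 1$ such that
\[
C\left|\!\left|\!\left| ( \boldsymbol{u}-\boldsymbol{u}_h^c , e^p,  \theta-\theta_h^c) \right|\!\right|\!\right| \leq \mathcal{A}_h^{\mathrm{NS},\boldsymbol{u}_h}(\boldsymbol{u}-\boldsymbol{u}_h^c, p-p_h, \theta-\theta_h^c;\boldsymbol{v},q,\varphi).
\]
I will then rewrite $\boldsymbol{u}-\boldsymbol{u}_h^c = \boldsymbol{e}^{\boldsymbol{u}} + \boldsymbol{u}_h^r$ (and likewise for $\theta$), using bilinearity. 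By the continuity bounds of Theorem~\ref{qa4}, the residual-part contribution $\mathcal{A}_h^{\mathrm{NS},\boldsymbol{u}_h}(\boldsymbol{u}_h^r,0,\theta_h^r;\boldsymbol{v},q,\varphi)$ is at most $C\,C_r(\sum_K\Psi_{J_K}^2)^{1/2}$. Together with the triangle-inequality term above, this produces the factor $(1+C)C_r$.

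The remaining term is $\mathcal{A}_h^{\mathrm{NS},\boldsymbol{u}_h}(\boldsymbol{e}^{\boldsymbol{u}},e^p,\boldsymbol{e}^\theta;\boldsymbol{v},q,\varphi)$, which I will expand as $\mathcal{A}_h^{\mathrm{NS},\boldsymbol{u}_h}(\boldsymbol{u},p,\theta;\cdot)-\mathcal{A}_h^{\mathrm{NS},\boldsymbol{u}_h}(\boldsymbol{u}_h,p_h,\theta_h;\cdot)$. For the exact solution, the linearized form differs from the nonlinear one by $O_S^h(\boldsymbol{u}_h-\boldsymbol{u};\boldsymbol{u},\boldsymbol{v})$ and $O_T(\boldsymbol{u}_h-\boldsymbol{u};\theta,\varphi)$. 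These differences are bounded by $M\left|\!\left|\!\left|(\boldsymbol{e}^{\boldsymbol{u}},e^p,\boldsymbol{e}^\theta)\right|\!\right|\!\right|$ using $\|\boldsymbol{u}\|_{1,\infty},\|\theta\|_{1,\infty}<M$. For $M\le C/2$ they are absorbed into the left-hand side, which is why the constant there is $C/2$. Consistency of the continuous problem \eqref{bb} against conforming test functions replaces the exact-solution term by $\int_\Omega g\varphi$. Since $(\boldsymbol{v},\varphi)$ is conforming, the Nitsche boundary terms vanish there (using $\boldsymbol{u}=\boldsymbol{u}_\star$, $\theta=\theta_\star$ on $\Gamma_I$ and $\boldsymbol{u}\cdot\boldsymbol{n}=0$ on $\Gamma_W$).

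Next I will introduce the Cl\'ement interpolants $\boldsymbol{v}_h,\varphi_h$ and subtract the discrete equation \eqref{E}, tested with $(\boldsymbol{v}_h,0,\varphi_h)$. This gives Galerkin orthogonality up to the data terms $F_v(\boldsymbol{v}_h)$ and $F_\theta(\varphi_h)$. Those data terms reappear with flipped sign as the explicit $\mathcal{E}_I$ sums in the statement, while $\int_\Omega g\varphi_h$ combines with $\int_\Omega g\varphi$ to give $\int g(\varphi-\varphi_h)$. What remains is $-\mathcal{A}_h^{\mathrm{NS},\boldsymbol{u}_h}(\boldsymbol{u}_h,p_h,\theta_h;\boldsymbol{v}-\boldsymbol{v}_h,q,\varphi-\varphi_h)$, exactly as stated. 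The pressure test function $q$ is not interpolated because the discrete mass equation only holds against $\Pi_h$, so $q$ stays intact in the residual.

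The main obstacle is the nonlinear/linearization bookkeeping. The outlet term inside $O_T$ involves $\psi(\boldsymbol{u}\cdot\boldsymbol{n})$ versus $\psi(\boldsymbol{u}_h\cdot\boldsymbol{n})$, so frozen-coefficient linearity fails there. I will handle it with the Lipschitz continuity and boundedness of $\psi$ plus trace inequalities, bounding the difference by $M\|\boldsymbol{e}^{\boldsymbol{u}}\|_{1,h}$ and absorbing it. A secondary check is that the Nitsche consistency terms with $\boldsymbol{v}$ conforming produce no stray boundary contributions beyond those displayed.
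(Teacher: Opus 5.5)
Your proposal is correct and follows essentially the same route as the paper: the splitting $\boldsymbol{u}_h=\boldsymbol{u}_h^c+\boldsymbol{u}_h^r$, $\theta_h=\theta_h^c+\theta_h^r$ with Lemma~\ref{SRL1}, then Theorem~\ref{inf_sup_global} with frozen velocity $\boldsymbol{u}_h$, then the linearization identity with the $M$-small convective terms absorbed to give $C/2$, then consistency of \eqref{bb}, and finally adding and subtracting $\mathcal{A}_h^{\mathrm{NS},\boldsymbol{u}_h}(\boldsymbol{u}_h,p_h,\theta_h;\boldsymbol{v}_h,0,\varphi_h)$ together with \eqref{E}. The one difference is that you handle the outlet term ($\psi(\boldsymbol{u}\cdot\boldsymbol{n})$ versus $\psi(\boldsymbol{u}_h\cdot\boldsymbol{n})$) explicitly through the Lipschitz continuity of $\psi$, whereas the paper silently treats $O_T$ as linear in its first argument.
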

\begin{proof}
    Using $\boldsymbol{u}_h = \boldsymbol{u}_h^c + \boldsymbol{u}_h^r$, $\theta_h = \theta_h^c + \theta_h^r$, $\boldsymbol{e}_c^{\boldsymbol{u}} = \boldsymbol{u} - \boldsymbol{u}_h^c$, $\boldsymbol{e}_c^{\theta} = \theta - \theta_h^c$ and the triangle inequality implies 
   $$
     \left|\!\left|\!\left|( \boldsymbol{e}^{\boldsymbol{u}} , e^p, \boldsymbol{e}^{\theta}) \right|\!\right|\!\right| \leq \left|\!\left|\!\left| (\boldsymbol{e}_c^{\boldsymbol{u}} , e^p, \boldsymbol{e}_c^{\theta} )\right|\!\right|\!\right| + \|\boldsymbol{u}_h^r\|_{1,\mathcal{K}_h} + + \|\theta_h^r\|_{1,\mathcal{K}_h} \leq  \left|\!\left|\!\left| (\boldsymbol{e}_c^{\boldsymbol{u}}, e^p, \boldsymbol{e}_c^{\theta}) \right|\!\right|\!\right| + C_r \left( \sum_{K \in \mathcal{K}_h} \Psi_{J_K}^2 \right)^{1/2},  
   $$
   where $(\boldsymbol{e}_c^{\boldsymbol{u}}, e^p, \boldsymbol{e}_c^{\theta}) \in\mathrm{V}\times \mathrm{M} \times \Pi$. Then, Theorem \ref{inf_sup_global} gives 
   \begin{align*}
       C \left|\!\left|\!\left| (\boldsymbol{e}_c^{\boldsymbol{u}}, e^p, \boldsymbol{e}_c^{\theta}) \right|\!\right|\!\right| & \leq\mathcal{A}_h^{\mathrm{NS},\boldsymbol{u}_h} (\boldsymbol{e}_c^{\boldsymbol{u}}, e^p, \boldsymbol{e}_c^{\theta}; \boldsymbol{v},q, \varphi )  \\ & \leq  \mathcal{A}_h^{\mathrm{NS},\boldsymbol{u}_h} (\boldsymbol{e}^{\boldsymbol{u}}, e^p, \boldsymbol{e}^{\theta}; \boldsymbol{v},q, \varphi ) + \mathcal{A}_h^{\mathrm{NS},\boldsymbol{u}_h} (\boldsymbol{u}_h^r, 0, \theta_h^r; \boldsymbol{v},q, \varphi ) \\ & \leq \mathcal{A}_h^{\mathrm{NS},\boldsymbol{u}_h} (\boldsymbol{e}^{\boldsymbol{u}}, e^p, \boldsymbol{e}^{\theta}; \boldsymbol{v},q, \varphi) + C_r \left( \sum_{K \in \mathcal{K}_h} \Psi_{J_K}^2 \right)^{1/2}, 
   \end{align*}
   with $\left|\!\left|\!\left|(\boldsymbol{v},q, \varphi) \right|\!\right|\!\right| \leq   1 $. Owing to the relation 
   $$
  \mathcal{A}_h^{\mathrm{NS},\boldsymbol{u}_h}(\boldsymbol{u},p, \theta;\boldsymbol{v},q, \varphi) = \mathcal{A}_h^{\mathrm{NS},\boldsymbol{u}}(\boldsymbol{u},p, \theta;\boldsymbol{v},q,\varphi) - O_S^T(\boldsymbol{e}^{\boldsymbol{u}}; \boldsymbol{u}, \boldsymbol{v}) -O_T(\boldsymbol{e}^{\boldsymbol{u}}; \theta, \varphi)
   $$
   we then have 
   \begin{align*}
       C \left|\!\left|\!\left|( \boldsymbol{e}^{\boldsymbol{u}}, e^p, \boldsymbol{e}^{\theta}) \right|\!\right|\!\right| & \leq C \left|\!\left|\!\left|( \boldsymbol{e}_c^{\boldsymbol{u}}, e^p, \boldsymbol{e}^{\theta}_c ) \right|\!\right|\!\right| + C C_r \left( \sum_{K \in \mathcal{K}_h} \Psi_{J_K} ^2 \right)^{1/2} 
    \\   & \leq  \mathcal{A}_h^{\mathrm{NS},\boldsymbol{u}_h} (\boldsymbol{e}^{\boldsymbol{u}}, e^p, \boldsymbol{e}^{\theta}; \boldsymbol{v},q, \varphi)  +(1+C) C_r \left( \sum_{K \in \mathcal{K}_h} \Psi_{J_K}^2 \right)^{1/2}, \\& \leq \mathcal{A}_h^{\mathrm{NS},\boldsymbol{u}}(\boldsymbol{u},p, \theta;\boldsymbol{v},q,\varphi) - O_S^T(\boldsymbol{e}^{\boldsymbol{u}}; \boldsymbol{u}, \boldsymbol{v}) -O_T(\boldsymbol{e}^{\boldsymbol{u}}; \theta, \varphi) - \mathcal{A}_h^{\mathrm{NS},\boldsymbol{u}_h} (\boldsymbol{u}_h, p_h, {\theta}_h; \boldsymbol{v},q, \varphi) \\& \quad  +(1+C) C_r \left( \sum_{K \in \mathcal{K}_h} \Psi_{J_K}^2 \right)^{1/2},
   \end{align*}
   while using the properties $ O_S^T(\boldsymbol{e}^{\boldsymbol{u}}; \boldsymbol{u}, \boldsymbol{v}) \leq C_1 M \|\boldsymbol{e}^{\boldsymbol{u}}\|_{1, \mathcal{K}_h}$ and $ O_T(\boldsymbol{e}^{\boldsymbol{u}}; \theta, \varphi) \leq C_2 M \|\boldsymbol{e}^{\boldsymbol{u}}\|_{1, \mathcal{K}_h}$ yields the bounds
   \begin{align*}
        C \left|\!\left|\!\left|(\boldsymbol{e}^{\boldsymbol{u}}, e^p, \boldsymbol{e}^{\theta} )\right|\!\right|\!\right| & \leq \mathcal{A}_h^{\mathrm{NS},\boldsymbol{u}}(\boldsymbol{u},p, \theta;\boldsymbol{v},q,\varphi)  - \mathcal{A}_h^{\mathrm{NS},\boldsymbol{u}_h} (\boldsymbol{u}_h, p_h, {\theta}_h; \boldsymbol{v},q, \varphi)  +(1+C) C_r \left( \sum_{K \in \mathcal{K}_h} \Psi_{J_K}^2 \right)^{1/2} \\ & \quad - (C_2+C_3) M  \left|\!\left|\!\left|(\boldsymbol{e}^{\boldsymbol{u}}, e^p, \boldsymbol{e}^{\theta} )\right|\!\right|\!\right|.
   \end{align*}
   Moreover, from \eqref{bb} we have 
   \begin{align*}
C \left|\!\left|\!\left| ( \boldsymbol{e}^{\boldsymbol{u}}, e^p, \boldsymbol{e}^{\theta} ) \right|\!\right|\!\right| & \leq \int_{\Omega} g \varphi   - \mathcal{A}_h^{\mathrm{NS},\boldsymbol{u}_h} (\boldsymbol{u}_h, p_h, {\theta}_h; \boldsymbol{v},q, \varphi)  +(1+C) C_r \left( \sum_{K \in \mathcal{K}_h} \Psi_{J_K}^2 \right)^{1/2}.
\end{align*}
By adding and subtracting \( \mathcal{A}_h^{\mathrm{NS},\boldsymbol{u}_h}\left(\boldsymbol{u}_h, p_h, \theta_h; \boldsymbol{v}_h, 0, \varphi_h \right) \), respectively, and utilizing the definition of the discrete weak formulation \eqref{E}, we obtain the desired result.
\end{proof}   
\begin{lemma}\label{SRL3}
     For $\left( \boldsymbol{v},q, \varphi \right) \in \mathrm{V} \times \Pi \times \mathrm{M} $, there is $(\boldsymbol{u}_h, p_h, \theta_h) \in \mathrm{V}_h \times \Pi_h \times \mathrm{M}_h$ such that 
     \begin{align*}
    &  \int_{\Omega} g ( \varphi - \varphi_h )   - \mathcal{A}_h^{\mathrm{NS},\boldsymbol{u}_h}(\boldsymbol{u}_h,p_h, \theta_h; \boldsymbol{v}-\boldsymbol{v}_h,q, \varphi - \varphi_h ) + (1+C) C_r \left( \sum_{K \in \mathcal{K}_h} \Psi_{J_K}^2  \right)^{1/2} \\ & +  \sum_{E\in \mathcal{E}_I}\bigg(\int_{E} 2 \nu \varepsilon(\boldsymbol{v}_h) \cdot \boldsymbol{u}_{\star} d s - \gamma_N \int_{E} {h_e}^{-1}(\boldsymbol{u}_{\star} \cdot \boldsymbol{v}_h) d s  \bigg) + \sum_{E\in \mathcal{E}_I}\bigg( \int_{E} (\nabla \varphi_h \cdot \boldsymbol{n}) \theta_{\star} d s - \gamma_N \int_{E} {h_e}^{-1}(\theta_{\star} \varphi_h)d s \bigg) \\
    & \leq C \left(\Psi + \| \theta_h ( \boldsymbol{f} - \boldsymbol{f}_h) \|_{0,\Omega} + \| g -g_h\|_{0,\Omega} \right) \left|\!\left|\!\left| (\boldsymbol{v}, q, \varphi)\right|\!\right|\!\right|.
     \end{align*}
\end{lemma}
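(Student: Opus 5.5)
The strategy is the standard residual argument. Take $(\boldsymbol{v}_h,\varphi_h)$ to be the Cl\'ement interpolants of $(\boldsymbol{v},\varphi)$, as already used in Lemma~\ref{SRL2}. Then integrate $\mathcal{A}_h^{\mathrm{NS},\boldsymbol{u}_h}(\boldsymbol{u}_h,p_h,\theta_h;\boldsymbol{v}-\boldsymbol{v}_h,q,\varphi-\varphi_h)$ by parts element by element, so that every contribution becomes an element residual, a facet jump, or a boundary residual. These are exactly the quantities collected in $\Psi$.

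\textbf{Step 1: integration by parts.} Write $\boldsymbol{w}=\boldsymbol{v}-\boldsymbol{v}_h$ and $\phi=\varphi-\varphi_h$, and apply Green's formula on each $K\in\mathcal{K}_h$ to the volume terms:
\begin{itemize}
\item $2\nu(\varepsilon(\boldsymbol{u}_h),\varepsilon(\boldsymbol{w}))_K$ and $-(p_h,\nabla\cdot\boldsymbol{w})_K$ give $-(\nu\Delta\boldsymbol{u}_h-\nabla p_h,\boldsymbol{w})_K$ plus $\int_{\partial K}(2\nu\varepsilon(\boldsymbol{u}_h)-p_h\mathbb{I})\boldsymbol{n}\cdot\boldsymbol{w}$;
\item $\kappa(\nabla\theta_h,\nabla\phi)_K$ gives $-(\kappa\Delta\theta_h,\phi)_K$ plus $\int_{\partial K}\kappa\nabla\theta_h\cdot\boldsymbol{n}\,\phi$.
\end{itemize}
Adding $\int_\Omega g\phi$, $D(\theta_h,\boldsymbol{w})$ and the convective terms, the volume part equals
$$\sum_K (\boldsymbol{R}_{1,K},\boldsymbol{w})_K+(\boldsymbol{R}_{2,K},\phi)_K,$$
up to the data oscillations $(\alpha\theta_h(\boldsymbol{f}-\boldsymbol{f}_h),\boldsymbol{w})$ and $(g-g_h,\phi)$.

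Summing the element boundary terms over interior faces produces the jumps $\boldsymbol{R}_{1,E}$ and $\boldsymbol{R}_{2,E}$. On $\Gamma_I$, $\Gamma_W$ and $\Gamma_O$ they combine with the Nitsche and Robin terms of $A_S^h$, $A_T^h$ and $O_T$ to produce $\boldsymbol{J}_{1,K}$ and $\mathbf{J}_{2,K}$, using the tangential/normal splitting \eqref{tangential} on $\Gamma_W$. The remaining Nitsche terms are of two kinds:
\begin{itemize}
\item consistency terms, $\int_E 2\nu\varepsilon(\boldsymbol{w})\boldsymbol{n}\cdot\boldsymbol{n}\,(\boldsymbol{u}_h\cdot\boldsymbol{n})$ and $\int_E\kappa\nabla\phi\cdot\boldsymbol{n}\,\theta_h$;
\item penalty terms, $\gamma_N h_e^{-1}\int_E(\boldsymbol{u}_h\cdot\boldsymbol{n})(\boldsymbol{w}\cdot\boldsymbol{n})$.
\end{itemize}
The explicit $\boldsymbol{u}_\star,\theta_\star$ sums in the statement are grouped with their inlet counterparts. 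Together they contain only $\boldsymbol{u}_h-\boldsymbol{u}_\star$ and $\theta_h-\theta_\star$ on $\mathcal{E}_I$, and $\boldsymbol{u}_h\cdot\boldsymbol{n}$ on $\mathcal{E}_W$.

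\textbf{Step 2: bounds for each group.}
\begin{itemize}
\item Residual terms: Cauchy--Schwarz with the Cl\'ement estimates $\|\boldsymbol{w}\|_{0,K}\lesssim h_K\|\boldsymbol{v}\|_{1,\omega_K}$ and $\|\boldsymbol{w}\|_{0,E}\lesssim h_E^{1/2}\|\boldsymbol{v}\|_{1,\omega_E}$ (likewise for $\phi$), followed by finite overlap of the patches. This gives $\Psi\,|\!|\!|(\boldsymbol{v},q,\varphi)|\!|\!|$ plus oscillation.
\item Nitsche consistency terms: $h_E^{1/2}\|\varepsilon(\boldsymbol{v}_h)\|_{0,E}\lesssim\|\nabla\boldsymbol{v}_h\|_{0,K}\lesssim\|\boldsymbol{v}\|_1$ by the discrete trace inverse inequality and $H^1$-stability of Cl\'ement. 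The factors $h_E^{-1/2}\|\boldsymbol{u}_h\cdot\boldsymbol{n}\|_{0,E}$ and $h_E^{-1/2}\|\boldsymbol{u}_h-\boldsymbol{u}_\star\|_{0,E}$ belong to $\Psi_{J_1}$.
\item Penalty terms: $h_e^{-1}\|\boldsymbol{w}\|_{0,E}\|\boldsymbol{u}_h\cdot\boldsymbol{n}\|_{0,E}\lesssim h_e^{-1/2}\|\boldsymbol{u}_h\cdot\boldsymbol{n}\|_{0,E}\,\|\boldsymbol{v}\|_{1}$.
\item Pressure test term: $B^h(\boldsymbol{u}_h,q)$ involves the full $q$, not $q-q_h$. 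Write it as $-(q,\nabla\cdot\boldsymbol{u}_h)+\sum\int_E q\,\boldsymbol{n}\cdot\boldsymbol{u}_h$. Bound $\|\nabla\cdot\boldsymbol{u}_h\|_0$ by subtracting $\nabla\cdot\boldsymbol{u}=0$; this is $\lesssim\|\nabla\boldsymbol{e}^{\boldsymbol{u}}\|$, which is absorbed in the left side of Lemma~\ref{SRL2}. Alternatively, include $\|\nabla\cdot\boldsymbol{u}_h\|_{0,K}$ in the element residual. The boundary part is controlled by $\Psi_{J_1}$ after a discrete trace inequality on $q$.
\item Term $(1+C)C_r(\sum\Psi_{J_K}^2)^{1/2}$: this is already $\le C\Psi$.
\end{itemize}

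\textbf{Main obstacles.} The main difficulty is the bookkeeping on $\Gamma_W$. There the normal part of $\mathbb{T}(\boldsymbol{u}_h,p_h)\boldsymbol{n}\cdot\boldsymbol{w}$ has no counterpart in $\boldsymbol{J}_{1,K}$, and must be handled by writing $\boldsymbol{w}\cdot\boldsymbol{n}=-\boldsymbol{v}_h\cdot\boldsymbol{n}$ on $\Gamma_W$ (since $\boldsymbol{v}\in\mathrm{V}$). It is then bounded through the inverse trace inequality by $h_E^{-1/2}\|\boldsymbol{v}_h\cdot\boldsymbol{n}\|_{0,E}$ times $h_E^{1/2}\|\mathbb{T}(\boldsymbol{u}_h,p_h)\boldsymbol{n}\|_{0,E}$. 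For this I would use a Cl\'ement variant preserving $\boldsymbol{v}\cdot\boldsymbol{n}=0$ on $\Gamma_W$ and $\boldsymbol{v}=0$ on $\Gamma_I$ (Scott--Zhang type), so that these contributions vanish. The nonlinear outlet term $(\boldsymbol{u}_h\cdot\boldsymbol{n})\theta_h\psi(\cdot)$ needs only $\psi\in L^\infty$ and is incorporated verbatim in $\mathbf{J}_{2,K}$.
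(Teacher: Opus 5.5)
Your route is the paper's $T_1$--$T_5$ argument: elementwise integration by parts into element residuals, interior jumps and boundary residuals, then Cauchy--Schwarz with Cl\'ement estimates. But two steps do not go through as written, and they are exactly the steps the paper's own proof leaves unjustified.

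\textbf{Traces of non-discrete test functions.} The left-hand side evaluates the Nitsche form at the non-discrete triple $(\boldsymbol{v}-\boldsymbol{v}_h,q,\varphi-\varphi_h)$. It therefore contains $\int_E q\,(\boldsymbol{n}\cdot\boldsymbol{u}_h)\,ds$ on $\mathcal{E}_W\cup\mathcal{E}_I$ (from $B^h(\boldsymbol{u}_h,q)$). It also contains the symmetric consistency terms $\int_E 2\nu\varepsilon(\boldsymbol{v})\boldsymbol{n}\cdot\boldsymbol{n}\,(\boldsymbol{n}\cdot\boldsymbol{u}_h)\,ds$, $\int_E 2\nu\varepsilon(\boldsymbol{v})\boldsymbol{n}\cdot\boldsymbol{u}_h\,ds$ and $\int_E\kappa(\nabla\varphi\cdot\boldsymbol{n})\theta_h\,ds$. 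For $q\in L^2(\Omega)$ and $\boldsymbol{v},\varphi\in H^1$, none of these traces exists.
\begin{itemize}
\item Your ``discrete trace inequality on $q$'' is unavailable, because $q$ is not a finite element function.
\item Your consistency bound only handles the $\varepsilon(\boldsymbol{v}_h)$ half.
\item On $\Gamma_I$ the $\varepsilon(\boldsymbol{v})$ half is paired with $\boldsymbol{u}_h$, not with $\boldsymbol{u}_h-\boldsymbol{u}_\star$, so it is not even formally a $\Psi_{J_1}$ quantity.
\end{itemize}
The paper's bound by $\|\boldsymbol{u}_h\cdot\boldsymbol{n}\|_{0,\Gamma_W}\|q\|_{0,\Gamma_W}$ has the same defect.

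The missing idea is never to apply Nitsche terms to non-discrete functions. Build the residual from the continuous form $\mathcal{A}$ of \eqref{bb}, tested with $(\boldsymbol{v},q,\varphi)\in\mathrm{V}_0\times\Pi\times\mathrm{M}_0$. There $\boldsymbol{v}\cdot\boldsymbol{n}=0$ on $\Gamma_W$ and $\boldsymbol{v}=\boldsymbol{0}$, $\varphi=0$ on $\Gamma_I$ leave no consistency or penalty terms, and $q$ enters only through $(q,\nabla\cdot\boldsymbol{u}_h)$. Then subtract the discrete equation tested with $(\boldsymbol{v}_h,0,\varphi_h)$. This changes the left-hand side of the lemma and of Lemma~\ref{SRL2}, which as written is not well defined. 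Afterwards every consistency and penalty term involves only $\boldsymbol{v}_h,\varphi_h$. Your estimates $h_E^{1/2}\|\varepsilon(\boldsymbol{v}_h)\|_{0,E}\lesssim\|\boldsymbol{v}\|_{1,\omega_K}$ and $h_E^{-1/2}\|\boldsymbol{v}_h\cdot\boldsymbol{n}\|_{0,E}=h_E^{-1/2}\|(\boldsymbol{v}_h-\boldsymbol{v})\cdot\boldsymbol{n}\|_{0,E}\lesssim\|\boldsymbol{v}\|_{1,\omega_E}$ then become legitimate.

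\textbf{The divergence term.} The remaining term $(q,\nabla\cdot\boldsymbol{u}_h)$ needs $\|\nabla\cdot\boldsymbol{u}_h\|_{0,\Omega}$. This is not a component of $\Psi$, and Taylor--Hood velocities are not pointwise solenoidal. The same quantity is hidden in $\boldsymbol{R}_{1,K}$: integration by parts gives $2\nu\nabla\cdot\varepsilon(\boldsymbol{u}_h)=\nu\Delta\boldsymbol{u}_h+\nu\nabla(\nabla\cdot\boldsymbol{u}_h)$, not $\nu\Delta\boldsymbol{u}_h$.

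Your absorption alternative fails for two reasons. It would not prove the stated bound, whose right-hand side contains only $\Psi$ and data oscillation. Nor does it absorb: $\|\nabla\cdot\boldsymbol{e}^{\boldsymbol{u}}\|_{0,\Omega}\le\sqrt{d}\,\|\varepsilon(\boldsymbol{e}^{\boldsymbol{u}})\|_{0,\Omega}$ carries no small factor that would let it be hidden in $\frac{C}{2}|\!|\!|(\boldsymbol{e}^{\boldsymbol{u}},e^p,\boldsymbol{e}^{\theta})|\!|\!|$. The divergence residual is a genuine part of the error functional.

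Your other alternative is the right repair: add $\sum_K\|\nabla\cdot\boldsymbol{u}_h\|_{0,K}^2$ to $\Psi$. It is trivially efficient since $\nabla\cdot\boldsymbol{u}=0$, and it also covers $h_K\|\nabla(\nabla\cdot\boldsymbol{u}_h)\|_{0,K}\lesssim\|\nabla\cdot\boldsymbol{u}_h\|_{0,K}$. However, it proves the lemma for a modified estimator. The paper's bound $T_2\le\|\nabla\cdot\boldsymbol{u}_h\|_{0,\Omega}\|q\|_{0,\Omega}$ leaves this quantity equally unaccounted for.

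\textbf{Two smaller points.}
\begin{itemize}
\item Your $\Gamma_W$ ``obstacle'' is illusory. After integration by parts, the normal part $(\mathbb{T}(\boldsymbol{u}_h,p_h)\boldsymbol{n}\cdot\boldsymbol{n})(\boldsymbol{w}\cdot\boldsymbol{n})$ cancels exactly against $-\int_E 2\nu\varepsilon(\boldsymbol{u}_h)\boldsymbol{n}\cdot\boldsymbol{n}\,(\boldsymbol{n}\cdot\boldsymbol{w})\,ds$ plus the term $\int_E p_h(\boldsymbol{n}\cdot\boldsymbol{w})\,ds$ contained in $B^h$. What remains is $\int_E[\mathbb{T}(\boldsymbol{u}_h,p_h)\boldsymbol{n}]_\tau\cdot\boldsymbol{w}\,ds$, so no boundary-preserving interpolant is needed.
\item The term $(1+C)C_r(\sum_K\Psi_{J_K}^2)^{1/2}$ carries no factor $|\!|\!|(\boldsymbol{v},q,\varphi)|\!|\!|$. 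So ``$\le C\Psi$'' matches the right-hand side only after normalizing $|\!|\!|(\boldsymbol{v},q,\varphi)|\!|\!|=1$.
\end{itemize}
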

\begin{proof}
    Using integration by parts element-wise gives
     \begin{align*}
    &  \int_{\Omega} g ( \varphi - \varphi_h )   - \mathcal{A}_h^{\mathrm{NS},\boldsymbol{u}_h}(\boldsymbol{u}_h,p_h, \theta_h; \boldsymbol{v}-\boldsymbol{v}_h,q, \varphi - \varphi_h ) + (1+C) C_r \left( \sum_{K \in \mathcal{K}_h} \Psi_{J_K}^2  \right)^{1/2} \\ & \quad - \left[   \sum_{E\in \mathcal{E}_I}\bigg(-\int_{E} 2 \nu \varepsilon(\boldsymbol{v}_h) \cdot \boldsymbol{u}_{\star} d s + \gamma_N \int_{E} {h_e}^{-1}(\boldsymbol{u}_{\star} \cdot \boldsymbol{v}_h) d s  \bigg) + \sum_{E\in \mathcal{E}_I}\bigg( -\int_{E} (\nabla \varphi_h \cdot \boldsymbol{n}) \theta_{\star} d s \right. \\ & \quad \left. +\gamma_N \int_{E} {h_e}^{-1}(\theta_{\star} \varphi_h)d s \bigg) \right] = T_1 +T_2 +T_3 +T_4 +T_5, 
     \end{align*}
     where we define the terms 
     \begin{align*}
         T_1 & \coloneqq \sum_{K \in \mathcal{K}_h} \int_K  \left( \alpha \theta_h \boldsymbol{f}_h + 2 \nu \nabla \cdot \varepsilon (\boldsymbol{u}_h) - \boldsymbol{u}_h \cdot \nabla \boldsymbol{u}_h - \nabla p_h \right) \left( \boldsymbol{v} - \boldsymbol{v}_h\right) + \sum_{K \in \mathcal{K}_h} \int_K \alpha \theta_h ( \boldsymbol{f} - \boldsymbol{f}_h) \boldsymbol{v}, \\
         T_2 & \coloneqq \sum_{K \in \mathcal{K}_h} \int_K (\nabla \cdot \boldsymbol{u}_h) q, \\
          T_3 & \coloneqq \sum_{K \in \mathcal{K}_h} \int_{\partial K_{\text{in}} } \bigg((p_h \boldsymbol{I} - 2 \nu \varepsilon (\boldsymbol{u}_h))   \cdot \boldsymbol{n} \bigg) (\boldsymbol{v}-\boldsymbol{v}_h) +  \sum_{E \in \Gamma_W} \left[\int_E \boldsymbol{n}^t \left( 2 \nu \varepsilon (\boldsymbol{v}-\boldsymbol{v}_h) - q I\right) \boldsymbol{n} (\boldsymbol{n} \cdot \boldsymbol{u}_h)  \, ds  \right.  \\& \quad  \left. - \int_E \{  \left[ \mathbb{T}(\boldsymbol{u}_h,p_h) \boldsymbol{n} \right]_{\tau} + \gamma \boldsymbol{u}_h \} (\boldsymbol{v}- \boldsymbol{v}_h)  \, ds - \frac{\gamma_N }{h_E} \int_E (\boldsymbol{u}_h \cdot \boldsymbol{n})(\boldsymbol{v}-\boldsymbol{v}_h) \cdot \boldsymbol{n}  \, ds \right]  -
         \sum_{E \in \Gamma_O} \int_E (\mathbb{T}(\boldsymbol{u}_h,p_h) \boldsymbol{n}) (\boldsymbol{v}- \boldsymbol{v}_h) \\ & \quad 
        + \sum_{E \in \Gamma_I} \left[  \int_E 2 \nu \varepsilon (\boldsymbol{u}_h) \boldsymbol{n} ( \boldsymbol{v}- \boldsymbol{v}_h) + \int_E 2 \nu \varepsilon (\boldsymbol{v} - \boldsymbol{v}_h) \boldsymbol{n} (\boldsymbol{u}_h -\boldsymbol{u}_{\star}) - \int_E p_h \left(\boldsymbol{n} \cdot  (\boldsymbol{v} -\boldsymbol{v}_h) \right) - \int_E q (\boldsymbol{n} \cdot \boldsymbol{u}_h) \right.  \\ & \left. \quad -  \frac{\gamma_N }{h_E} \int_E (\boldsymbol{u}_h - \boldsymbol{u}_{\star}) (\boldsymbol{v} - \boldsymbol{v}_h)   \right], \\ 
         T_4 & \coloneqq \sum_{K \in \mathcal{K}_h} \int_K  \left( g_h + \kappa \Delta \theta_h - \boldsymbol{u}_h \cdot \nabla \theta_h  \right) \left( \varphi - \varphi_h \right) + \sum_{K \in \mathcal{K}_h} \int_K (g-g_h) \varphi, \\
         T_5 & \coloneqq \sum_{K \in \mathcal{K}_h} \int_{\partial K_{\text{in}} } - \kappa \nabla \theta_h \boldsymbol{n} (\varphi - \varphi_h)  -   \sum_{E \in \Gamma_W}  \int_E \left( \kappa  \nabla \theta_h \cdot \boldsymbol{n}  + \beta \theta_h \right) (\varphi - \varphi_h) \\& \quad  -   \sum_{E \in \Gamma_O}  \int_E \left(  \kappa  \nabla \theta_h \cdot \boldsymbol{n} - (\boldsymbol{u}_h \cdot \boldsymbol{n}) \theta_h \psi(\boldsymbol{u}_h \cdot \boldsymbol{n}) \right) (\varphi - \varphi_h) \\
         & \quad + \sum_{E \in \Gamma_I }  \bigl[ \int_E \kappa \nabla \theta_h \boldsymbol{n}  (\varphi -\varphi_h) + \int_E \kappa \nabla (\varphi -\varphi_h) \boldsymbol{n} (\theta_h -\theta_{\star}) - \frac{\gamma_N }{h_E} \int_E (\theta_h - \theta_{\star}) (\varphi -\varphi_h)  \bigr].
     \end{align*}
Applying the Cauchy-Schwarz inequality and Cl\'{e}ment interpolation estimates \cite{MR400739} to $T_1$ implies
\begin{align*}
    T_1 & \leq \left( \sum_{K \in \mathcal{K}_h} h_K^2 \| \boldsymbol{R}_{1,K} \|_{0,K}^2 \right)^{1/2} \left( \sum_{K \in \mathcal{K}_h} h_K^{-2} \|\boldsymbol{v} - \boldsymbol{v}_h \|_{0,K}^2 \right)^{1/2} + \sum_{K \in \mathcal{K}_h} \int_K \alpha \theta_h ( \boldsymbol{f} - \boldsymbol{f}_h) \boldsymbol{v} \\ &  \leq C \left( \sum_{K \in \mathcal{K}_h} h_K^2\| \boldsymbol{R}_{1,K} \|_{0,K}^2 \right)^{1/2} \| \nabla \boldsymbol{v}\|_{0,\Omega} + \| \theta_h ( \boldsymbol{f} - \boldsymbol{f}_h) \|_{0,\Omega} \| \boldsymbol{v} \|_{0,\Omega}.
\end{align*}
The bound for $T_2$ is defined as 
$ T_2 \leq \| \nabla \cdot \boldsymbol{u}_h\|_{0,\Omega} \| q\|_{0,\Omega}$.

Next, we rewrite \( T_3 \) in terms of a sum over interior facets and then apply the Cauchy-Schwarz and trace inequalities. This entails
\begin{align*}
    T_3 
    &\leq \left( \sum_{E \in \mathcal{E}_{\Omega}}  h_E \|\boldsymbol{R}_E\|^2_{0,E}\right)^{1/2} \left( \sum_{E \in \mathcal{E}_{\Omega}} \frac{1}{h_E}  \|\boldsymbol{v}-\boldsymbol{v}_h\|_{0,E}^2 \right)^{1/2} + \left(\sum_{K \in \mathcal{K}_h} h_K^{-2} \|\boldsymbol{v} - \boldsymbol{v}_h \|^2_{0,K} \right)^{1/2} \\
    &\quad   \left( \sum_{E \in \mathcal{E}_{W}} \frac{1}{h_E} \| \boldsymbol{u}_n \cdot \boldsymbol{n} \|^2_{0,E} \right)^{1/2}  + \| \boldsymbol{u}_h \cdot \boldsymbol{n} \|_{0,\Gamma_W} \|q\|_{0,\Gamma_W}  + \left( \sum_{E \in \mathcal{E}_W} h_E \| \boldsymbol{R}_{1,E} \|_{0,E}^2 \right)^{1/2} \\
    &\quad  \left( \sum_{E \in \mathcal{E}_W} \frac{1}{h_E}  \|\boldsymbol{v} - \boldsymbol{v}_h \|_{0,E}^2\right)^{1/2}  + \left(\frac{\gamma_N^2 } {h_E} \sum_{E \in \mathcal{E}_W}  \|\boldsymbol{u}_h \cdot \boldsymbol{n} \|^2_{0,E} \right)^{1/2}  \left(\sum_{E \in \mathcal{E}_W}  \frac{1}{h_E}  \|\boldsymbol{v} - \boldsymbol{v}_h \|_{0,E}^2 \right)^{1/2} \\ & \quad + \left( \sum_{E \in \mathcal{E}_O} h_E \| \boldsymbol{R}_{1,E} \|_{0,E}^2 \right)^{1/2}  \left( \sum_{E \in \mathcal{E}_O} \frac{1}{h_E}  \|\boldsymbol{v} - \boldsymbol{v}_h \|_{0,E}^2\right)^{1/2} +  \left( \sum_{E \in \mathcal{E}_I} h_E \| \boldsymbol{R}_{1,E} \|_{0,E}^2 \right)^{1/2} \\& \quad  \left( \sum_{E \in \mathcal{E}_I} \frac{1}{h_E}  \|\boldsymbol{v} - \boldsymbol{v}_h \|_{0,E}^2\right)^{1/2} + \left(\sum_{K \in \mathcal{K}_h} h_K^{-2} \|\boldsymbol{v} - \boldsymbol{v}_h \|^2_{0,K} \right)^{1/2}    \left( \sum_{E \in \mathcal{E}_{I}} \frac{1}{h_E} \| \boldsymbol{u}_h - \boldsymbol{u}_{\star} \|^2_{0,E} \right)^{1/2} \\& \quad 
    + \| \boldsymbol{u}_h \cdot \boldsymbol{n} \|_{0,\Gamma_I} \|q\|_{0,\Gamma_I} + \left(\frac{\gamma_N^2 } {h_E} \sum_{E \in \mathcal{E}_I}  \|\boldsymbol{u}_h -  \boldsymbol{u}_{\star} \|^2_{0,E} \right)^{1/2}  \left(\sum_{E \in \mathcal{E}_I}  \frac{1}{h_E}  \|\boldsymbol{v} - \boldsymbol{v}_h \|_{0,E}^2 \right)^{1/2}, 
    \\ & \leq \left(\sum_{E \in \partial K}  h_E \|\boldsymbol{R}_{1,E}\|^2_{0,E} +  \Psi_{J_K}^2 \right)^{1/2}  \left( \| \nabla \boldsymbol{v}\|_{0,\Omega}^2 + \|q\|^2_{0,\Omega}  \right)^{1/2}.
\end{align*}
Proceeding in a similar fashion, we are able to establish the following bounds for $T_4$ and $T_5$:
\begin{align*}
    T_4 & \leq  C \left( \sum_{K \in \mathcal{K}_h} h_K^2\| \boldsymbol{R}_{2,K} \|_{0,K}^2 \right)^{1/2} \| \nabla \varphi\|_{0,\Omega} + + \| g -g_h\|_{0,\Omega} \|\varphi\|_{0,\Omega}, \\
    T_5  & \leq  \left(\sum_{E \in \partial K}  h_E \|\boldsymbol{R}_{2,E}\|^2_{0,E} +  \Psi_{J_K}^2 \right)^{1/2}  \| \nabla \varphi \|_{0,\Omega}.
\end{align*}
\end{proof}
 \begin{theorem}
Let $(\boldsymbol{u}, p, \theta)$ be the solution to \eqref{bb} and $(\boldsymbol{u}_h, p_h, \theta_h)$ the solution to \eqref{E}. Let $\Psi$ be the a posteriori error estimator defined in \eqref{total_estimate}. If $\| \boldsymbol{u}\|_{1,\infty} < M, \| \theta \|_{1,\infty} < M$ sufficiently small $M$, then the following estimate holds:
\begin{align*}
    \left|\!\left|\!\left| (\boldsymbol{u}-\boldsymbol{u}_h , p -p_h, \theta - \theta_h)\right|\!\right|\!\right| \leq C \left(\Psi + \| \theta_h ( \boldsymbol{f} - \boldsymbol{f}_h) \|_{0,\Omega} + \| g -g_h\|_{0,\Omega} \right),
\end{align*}
where $C > 0$ is a constant independent of $h$.
 \end{theorem}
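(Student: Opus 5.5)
The reliability bound follows by chaining the two preceding lemmas. Given the error triple $(\boldsymbol{e}^{\boldsymbol{u}}, e^p, \boldsymbol{e}^{\theta})$, we first split off the nonconforming parts of the discrete solution. We write $\boldsymbol{u}_h = \boldsymbol{u}_h^c + \boldsymbol{u}_h^r$ and $\theta_h = \theta_h^c + \theta_h^r$, and Lemma~\ref{SRL1} controls $\|\boldsymbol{u}_h^r\|_{1,\mathcal{K}_h} + \|\theta_h^r\|_{1,\mathcal{K}_h}$ by $C_r(\sum_K \Psi_{J_K}^2)^{1/2} \le C_r \Psi$. The conforming error $(\boldsymbol{e}_c^{\boldsymbol{u}}, e^p, \boldsymbol{e}_c^{\theta})$ lies in $\mathrm{V}\times\Pi\times\mathrm{M}$. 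Theorem~\ref{inf_sup_global}, applied with $\tilde{\boldsymbol{u}} = \boldsymbol{u}_h$, then provides a test triple $(\boldsymbol{v}, q, \varphi)$ with $\left|\!\left|\!\left| (\boldsymbol{v},q,\varphi)\right|\!\right|\!\right| \le 1$ realizing the stability constant. The norm bound on $\boldsymbol{u}_h$ needed there comes from Theorem~\ref{van} under the smallness assumption on the data.

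With this test triple, Lemma~\ref{SRL2} applies directly, since its hypotheses $\|\boldsymbol{u}\|_{1,\infty} < M$ and $\|\theta\|_{1,\infty}<M$ are exactly those of the present theorem. It bounds $\frac{C}{2}\left|\!\left|\!\left| (\boldsymbol{e}^{\boldsymbol{u}}, e^p, \boldsymbol{e}^{\theta})\right|\!\right|\!\right|$ by the residual functional evaluated at $(\boldsymbol{v}-\boldsymbol{v}_h, q, \varphi - \varphi_h)$, plus the Nitsche inlet data terms and $(1+C)C_r\Psi$. Here $\boldsymbol{v}_h$ and $\varphi_h$ are Cl\'ement interpolants. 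Smallness of $M$ is what absorbs the linearization terms $O_S^h(\boldsymbol{e}^{\boldsymbol{u}};\boldsymbol{u},\boldsymbol{v})$ and $O_T(\boldsymbol{e}^{\boldsymbol{u}};\theta,\varphi)$ into the left-hand side, which leaves the constant $C/2$.

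Next we invoke Lemma~\ref{SRL3}. It bounds that whole right-hand side by
\[
C\left(\Psi + \|\theta_h(\boldsymbol{f}-\boldsymbol{f}_h)\|_{0,\Omega} + \|g-g_h\|_{0,\Omega}\right)\left|\!\left|\!\left| (\boldsymbol{v},q,\varphi)\right|\!\right|\!\right|.
\]
Its proof integrates by parts elementwise into the terms $T_1,\dots,T_5$. The element residuals $\boldsymbol{R}_{i,K}$, the interior jumps $\boldsymbol{R}_{i,E}$ and the boundary residuals $\boldsymbol{J}_{i,K}$ are then paired with Cl\'ement interpolation bounds $h_K^{-1}\|\boldsymbol{v}-\boldsymbol{v}_h\|_{0,K} + h_E^{-1/2}\|\boldsymbol{v}-\boldsymbol{v}_h\|_{0,E} \lesssim \|\nabla\boldsymbol{v}\|_{0,\omega_K}$ and the analogous bound for $\varphi$. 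The Nitsche consistency terms $h_E^{-1}\|\boldsymbol{u}_h\cdot\boldsymbol{n}\|^2$, $h_E^{-1}\|\boldsymbol{u}_h-\boldsymbol{u}_\star\|^2$ and $h_E^{-1}\|\theta_h-\theta_\star\|^2$ are already contained in $\Psi_{J_K}^2$. Using $\left|\!\left|\!\left| (\boldsymbol{v},q,\varphi)\right|\!\right|\!\right|\le 1$ and dividing by $C/2$ then gives the claimed estimate with a constant independent of $h$.

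The main obstacle is the divergence term $T_2=\sum_K(\nabla\cdot\boldsymbol{u}_h, q)_K$ together with the boundary pressure terms $\|\boldsymbol{u}_h\cdot\boldsymbol{n}\|_{0,\Gamma}\|q\|_{0,\Gamma}$. They are not obviously bounded by $\Psi$, and $q\in L^2$ has no trace. For the divergence, we would use the splitting $\nabla\cdot\boldsymbol{u}_h = \nabla\cdot\boldsymbol{u}_h^r$, since the conforming part can be taken in the discrete kernel up to nonconformity. Lemma~\ref{SRL1} then gives $\|\nabla\cdot\boldsymbol{u}_h^r\|_{0,\Omega}\lesssim\Psi$. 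Alternatively, one can simply add $\|\nabla\cdot\boldsymbol{u}_h\|_{0,\Omega}$ as an estimator term. The boundary pressure terms should not be estimated directly. Instead we would undo the integration by parts of $B^h$ and test only with $\boldsymbol{v}-\boldsymbol{v}_h$, so the $q$-boundary contributions cancel against those in $B^h(\boldsymbol{u}_h,q)$. Whatever is left is of the form $h_E^{-1/2}\|\boldsymbol{u}_h\cdot\boldsymbol{n}\|_{0,E}$ times $h_E^{1/2}$-weighted discrete quantities, which inverse estimates bound by $\Psi$.
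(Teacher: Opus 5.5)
Your chain (Lemma~\ref{SRL1} for the nonconforming parts, Theorem~\ref{inf_sup_global} with $\tilde{\boldsymbol{u}}=\boldsymbol{u}_h$, then Lemmas~\ref{SRL2} and~\ref{SRL3}) is exactly the paper's proof. You are also right that $T_2=\sum_K(\nabla\cdot\boldsymbol{u}_h,q)_K$ is the weak point. The paper bounds it by $\|\nabla\cdot\boldsymbol{u}_h\|_{0,\Omega}\|q\|_{0,\Omega}$ and then silently absorbs it into $C\Psi$, although $\Psi$ contains no divergence term.

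Your first repair fails, however. The splitting $\boldsymbol{u}_h=\boldsymbol{u}_h^c+\boldsymbol{u}_h^r$ only isolates the boundary nonconformity. $\boldsymbol{u}_h^c$ is a projection onto $\mathrm{V}_h\cap\mathrm{V}$ and carries no divergence constraint. Even membership in the discrete kernel would only constrain $\nabla\cdot\boldsymbol{u}_h^c$ weakly against the continuous space $\Pi_h$, whereas $\nabla\cdot\boldsymbol{u}_h$ is a discontinuous piecewise $\mathbb{P}_{k-1}$ function, so Taylor--Hood velocities are not pointwise solenoidal. Concretely, if $\boldsymbol{u}_h$ happens to satisfy the boundary conditions exactly, then $\boldsymbol{u}_h^r=\boldsymbol{0}$ while $\nabla\cdot\boldsymbol{u}_h\neq 0$ in general. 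So $\nabla\cdot\boldsymbol{u}_h=\nabla\cdot\boldsymbol{u}_h^r$ is false. Your alternative of adding $\|\nabla\cdot\boldsymbol{u}_h\|_{0,\Omega}$ to the estimator proves a different theorem, not the stated one.

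The missing idea, which closes $T_2$ with $\Psi$ as defined, uses the continuity of both $\boldsymbol{u}_h$ and $p_h$. Set $w:=\nabla\cdot\boldsymbol{u}_h$.
\begin{itemize}
\item Tangential derivatives of the continuous field $\boldsymbol{u}_h$ do not jump across an interior facet, and $\llbracket p_h\boldsymbol{n}\rrbracket=\boldsymbol{0}$. Hence $\llbracket w\rrbracket_E=\llbracket \boldsymbol{n}^t\varepsilon(\boldsymbol{u}_h)\boldsymbol{n}\rrbracket_E$, so $\|\llbracket w\rrbracket\|_{0,E}\le\nu^{-1}\|\boldsymbol{R}_{1,E}\|_{0,E}$.
\item Let $q_h\in\Pi_h$ be the nodal-averaging (Oswald) interpolant of $w$. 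Then $\|q_h\|_{0,\Omega}\lesssim\|w\|_{0,\Omega}$ and $\|w-q_h\|_{0,\Omega}^2\lesssim\sum_{E\in\mathcal{E}_\Omega}h_E\|\llbracket w\rrbracket\|_{0,E}^2$.
\item The discrete mass equation $B^h(\boldsymbol{u}_h,q_h)=F_q(q_h)$ gives
\[
\|w\|_{0,\Omega}^2=(w,w-q_h)+\sum_{E\in\mathcal{E}_W}\int_E q_h\,(\boldsymbol{u}_h\cdot\boldsymbol{n})\,ds+\sum_{E\in\mathcal{E}_I}\int_E q_h\,(\boldsymbol{u}_h-\boldsymbol{u}_{\star})\cdot\boldsymbol{n}\,ds .
\]
\item The boundary sums are controlled by the discrete trace inequality $h_E^{1/2}\|q_h\|_{0,E}\lesssim\|q_h\|_{0,K}$ together with the $h_E^{-1/2}$-weighted Nitsche terms already in $\Psi_{J_1}$. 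This yields $\|w\|_{0,\Omega}\lesssim\Psi$.
\end{itemize}

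On the boundary pressure terms your diagnosis is correct: neither $q\in L^2$ nor $\varepsilon(\boldsymbol{v})$ for $\boldsymbol{v}\in\boldsymbol{H}^1$ has a trace. Concretely, evaluate the residual for the continuous test triple with $\mathcal{A}$, whose mass term is just $-(q,\nabla\cdot\boldsymbol{u}_h)$. Nitsche terms then enter only through Galerkin orthogonality with the discrete $\boldsymbol{v}_h,\varphi_h$ (and $q_h=0$), where inverse estimates apply.
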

\begin{proof}
    It suffices to apply Lemmas \ref{SRL2} and \ref{SRL3}.
\end{proof}
\subsection{Efficiency}
For each $K \in \mathcal{K}_h$, we can define the standard polynomial bubble function $b_K$. Then, for any polynomial function $\boldsymbol{v}$ on $K$, the following estimates hold:
\begin{align}\label{interior_bubb}
\| b_K \boldsymbol{v} \|_{0,K} \le C \| \boldsymbol{v} \|_{0,K}, 
\qquad
\| \boldsymbol{v} \|_{0,K} \le C \| b_K^{1/2} \boldsymbol{v} \|_{0,K},
\qquad
\| \nabla (b_K \boldsymbol{v}) \|_{0,K} \le C h_K^{-1} \| \boldsymbol{v} \|_{0,K},
\end{align}
where $C$ is a positive constant, independent of $K$ and $\boldsymbol{v}$; see \cite{MR3059294}.
\begin{lemma}\label{efficiency_1}
Let $K$ be an element of $\mathcal{K}_h$.   The local equilibrium residual satisfies
\begin{align*}
   h_K \boldsymbol{R}_{1,K} &\leq C \left( \| \boldsymbol{u} - \boldsymbol{u}_h \|_{1,K} + \|p-p_h\|_{0,K} + \| \theta - \theta_h \|_{1,K} + h_K \theta_h \| \boldsymbol{f}- \boldsymbol{f}_h \|_{0,K} \right), \\
     h_K \boldsymbol{R}_{2,K} &\leq C \left( \| \boldsymbol{u} - \boldsymbol{u}_h \|_{1,K} + \|\theta -\theta_h\|_{1,K} + h_K \| g - g_h \|_{0,K} \right).
\end{align*}
Moreover, it also follows that
$$
\Psi_{R_K} \leq \left|\!\left|\!\left| (\boldsymbol{u}-\boldsymbol{u}_h , p -p_h, \theta - \theta_h)\right|\!\right|\!\right|.
$$
\end{lemma}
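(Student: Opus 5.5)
We use the standard Verfürth bubble-function argument, applied separately to the momentum and the energy residuals on a fixed element $K$. For the momentum residual, we set $\boldsymbol{w}_K := b_K \boldsymbol{R}_{1,K}$. Since $\boldsymbol{u}_h,p_h,\theta_h$ and $\boldsymbol{f}_h$ are polynomial on $K$, $\boldsymbol{R}_{1,K}$ is a polynomial, so \eqref{interior_bubb} applies. Extending $\boldsymbol{w}_K$ by zero outside $K$ gives an $\boldsymbol{H}^1_0(K)$ function, which is admissible in the continuous problem \eqref{bb} (all boundary terms vanish). From the second estimate in \eqref{interior_bubb},
\[
C^{-1}\|\boldsymbol{R}_{1,K}\|_{0,K}^2 \le \int_K \boldsymbol{R}_{1,K}\cdot \boldsymbol{w}_K .
\]
We then insert the strong equation satisfied by $(\boldsymbol{u},p,\theta)$ in $K$, namely $\alpha\theta\boldsymbol{f}+\nu\Delta\boldsymbol{u}-(\boldsymbol{u}\cdot\nabla)\boldsymbol{u}-\nabla p=0$. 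Using $\nabla\cdot\boldsymbol{u}=0$, the identity $\nu\Delta\boldsymbol{u}=2\nu\nabla\cdot\varepsilon(\boldsymbol{u})$ holds; for the discrete part we keep $\Delta\boldsymbol{u}_h$ as written in $\boldsymbol{R}_{1,K}$. This yields
\[
\int_K \boldsymbol{R}_{1,K}\cdot\boldsymbol{w}_K = \int_K \bigl[\alpha(\theta_h-\theta)\boldsymbol{f} + \alpha\theta_h(\boldsymbol{f}_h-\boldsymbol{f}) + \nu\Delta(\boldsymbol{u}_h-\boldsymbol{u}) - \nabla(p_h-p) - \bigl((\boldsymbol{u}_h\cdot\nabla)\boldsymbol{u}_h-(\boldsymbol{u}\cdot\nabla)\boldsymbol{u}\bigr)\bigr]\cdot\boldsymbol{w}_K .
\]
We integrate the viscous and pressure terms by parts; there are no boundary contributions because $\boldsymbol{w}_K$ vanishes on $\partial K$. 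These terms are then bounded by $(\nu\|\nabla(\boldsymbol{u}-\boldsymbol{u}_h)\|_{0,K}+\|p-p_h\|_{0,K})\|\nabla\boldsymbol{w}_K\|_{0,K}$, and the third estimate in \eqref{interior_bubb} converts $\|\nabla\boldsymbol{w}_K\|_{0,K}$ into $Ch_K^{-1}\|\boldsymbol{R}_{1,K}\|_{0,K}$.

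The lower-order terms are handled with Hölder's inequality and the first estimate in \eqref{interior_bubb}. The data-oscillation term gives $\|\theta_h(\boldsymbol{f}-\boldsymbol{f}_h)\|_{0,K}\,\|\boldsymbol{R}_{1,K}\|_{0,K}$. The buoyancy difference is bounded by $\alpha\|\boldsymbol{f}\|_{0,K}\|\theta-\theta_h\|_{L^4(K)}\|\boldsymbol{w}_K\|_{L^4(K)}$. For $\boldsymbol{w}_K$ we use an inverse/Sobolev estimate on $K$ to pass to $\|\nabla\boldsymbol{w}_K\|_{0,K}\lesssim h_K^{-1}\|\boldsymbol{R}_{1,K}\|_{0,K}$, up to powers of $h_K$ that are at worst harmless since $h_K\le \mathrm{diam}\,\Omega$. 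The convective difference is split as
\[
(\boldsymbol{u}_h\cdot\nabla)\boldsymbol{u}_h-(\boldsymbol{u}\cdot\nabla)\boldsymbol{u} = \bigl((\boldsymbol{u}_h-\boldsymbol{u})\cdot\nabla\bigr)\boldsymbol{u}_h+(\boldsymbol{u}\cdot\nabla)(\boldsymbol{u}_h-\boldsymbol{u}).
\]
Each piece is bounded via $L^4$–$L^2$–$L^4$ Hölder and the embedding $H^1\hookrightarrow L^4$. The resulting factors $\|\boldsymbol{u}\|_{1}$ and $\|\boldsymbol{u}_h\|_{1,h}$ are controlled by the bound on the continuous solution and by Theorem~\ref{van}, and are absorbed into $C$. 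Dividing by $\|\boldsymbol{R}_{1,K}\|_{0,K}$ and multiplying by $h_K$ gives the first inequality.

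The second inequality follows the same pattern with $w_K:=b_K R_{2,K}$ and the strong equation $g+\kappa\Delta\theta-\boldsymbol{u}\cdot\nabla\theta=0$. The diffusion term is integrated by parts, and the transport difference is split as
\[
\boldsymbol{u}_h\cdot\nabla\theta_h-\boldsymbol{u}\cdot\nabla\theta=(\boldsymbol{u}_h-\boldsymbol{u})\cdot\nabla\theta_h+\boldsymbol{u}\cdot\nabla(\theta_h-\theta),
\]
each bounded as before with $\|\theta_h\|_{1,h}$ controlled by Theorem~\ref{van}. Here $g-g_h$ supplies the oscillation term. One remark is needed: the sign of the convection term in $\boldsymbol{R}_{2,K}$ as printed is inconsistent with the PDE. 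We will use the residual with the consistent sign $-\boldsymbol{u}_h\cdot\nabla\theta_h$, matching $T_4$ in Lemma~\ref{SRL3}. The final claim follows by squaring and adding the two local bounds and summing over $K$. Strictly, this yields $\Psi_{R_K}$ bounded by $C$ times the local error plus the oscillation terms, which is how the last inequality should be understood.

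The main obstacle is the treatment of the nonlinear terms (convection, buoyancy $\theta\boldsymbol{f}$ with only $\boldsymbol{f}\in\boldsymbol{L}^2$) on a single element: the $L^4$ norms are not local $H^1$ norms with $h$-independent constants. First we try a scaled Sobolev inequality on $K$ applied to $\boldsymbol{w}_K\in H^1_0(K)$, which needs no mean-value correction. For $\boldsymbol{u}-\boldsymbol{u}_h$ and $\theta-\theta_h$ we accept a patch- or global-norm bound if purely local control fails, using the a priori bounds to absorb the constants.
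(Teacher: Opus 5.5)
Your route is the same as the paper's. You test with $b_K\boldsymbol{R}_{1,K}$ (resp.\ $b_K\boldsymbol{R}_{2,K}$), subtract the strong equations, integrate the viscous and pressure terms by parts, and use \eqref{interior_bubb}. The paper then simply asserts the bounds for the convective and buoyancy terms. Your side remarks are correct: the transport term in $\boldsymbol{R}_{2,K}$ must be $-\boldsymbol{u}_h\cdot\nabla\theta_h$, and the last inequality only holds with a constant and the oscillation terms.

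The gap is your claim that the scaled Sobolev and inverse estimates leave only powers of $h_K$ that are ``at worst harmless''. For $d=2$ this is true: the buoyancy term gives $\|\boldsymbol{f}\|_{0,K}\bigl(\|\theta-\theta_h\|_{0,K}+h_K\|\nabla(\theta-\theta_h)\|_{0,K}\bigr)$. For $d=3$ it is false. With $\|v\|_{L^4(K)}\lesssim h_K^{-3/4}\|v\|_{0,K}+h_K^{1/4}\|\nabla v\|_{0,K}$ and $\|b_K\boldsymbol{R}_{1,K}\|_{L^4(K)}\lesssim h_K^{-3/4}\|\boldsymbol{R}_{1,K}\|_{0,K}$ you only get
\[
h_K\|\boldsymbol{R}_{1,K}\|_{0,K}\lesssim \cdots+\|\boldsymbol{f}\|_{0,K}\bigl(h_K^{-1/2}\|\theta-\theta_h\|_{0,K}+h_K^{1/2}\|\nabla(\theta-\theta_h)\|_{0,K}\bigr),
\]
which puts a negative power on the $L^2$ part of the error. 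The $L^6$--$L^2$--$L^3$ H\"older split gives the same exponents. The same problem arises, with your splitting, for $((\boldsymbol{u}_h-\boldsymbol{u})\cdot\nabla)\boldsymbol{u}_h$ and $(\boldsymbol{u}_h-\boldsymbol{u})\cdot\nabla\theta_h$, because $\nabla\boldsymbol{u}_h$ and $\nabla\theta_h$ are only controlled in $L^2$. So in 3D the two local inequalities are not established by your argument.

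To repair the convection terms, use the paper's splitting $((\boldsymbol{u}-\boldsymbol{u}_h)\cdot\nabla)\boldsymbol{u}+(\boldsymbol{u}_h\cdot\nabla)(\boldsymbol{u}-\boldsymbol{u}_h)$, and its analogue with $\nabla\theta$, so that the $L^2$ part of the error meets the exact gradient. Take $\nabla\boldsymbol{u},\nabla\theta\in L^3$, which is implied by the $W^{1,\infty}$ bounds already assumed for reliability, and use the scale-invariant inequality $\|\boldsymbol{w}\|_{L^6(K)}\lesssim\|\nabla\boldsymbol{w}\|_{0,K}$ for $\boldsymbol{w}\in\boldsymbol{H}^1_0(K)$. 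Then the first piece contributes $\|\boldsymbol{u}-\boldsymbol{u}_h\|_{0,K}$. The second piece, via $\|\boldsymbol{u}_h\|_{L^6(\Omega)}\lesssim\|\boldsymbol{u}_h\|_{1,h}$ and Theorem~\ref{van}, contributes $h_K^{1/2}\|\nabla(\boldsymbol{u}-\boldsymbol{u}_h)\|_{0,K}$.

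The buoyancy term genuinely needs more on the data in 3D, e.g.\ $\boldsymbol{f}\in\boldsymbol{L}^3(\Omega)$. With that, the $L^3$--$L^2$--$L^6$ split gives $\|\theta-\theta_h\|_{0,K}$.

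Your fallback to global norms of the error does not give the stated local bounds. It only yields a bound on $\sum_K\Psi_{R_K}^2$, and only if you keep the coefficient local, since $\sum_K h_K^{1/2}\|\boldsymbol{f}\|_{0,K}^2\le h^{1/2}\|\boldsymbol{f}\|_{0,\Omega}^2$. Absorbing $\|\boldsymbol{u}_h\|_{1,h}$ into $C$, as you propose, instead leaves a factor $\sum_K h_K^{1/2}$, which is unbounded as $h\to 0$.
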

\begin{proof}
    For each $K \in \mathcal{K}_h$, we define $\boldsymbol{W}_b = b_K \boldsymbol{R}_{1,K}$. Then, using \eqref{interior_bubb}, we have 
    \begin{align*}
    \frac{1}{C^2} \| \boldsymbol{R}_{1,K} \|_{0,K}^2 &\leq \| b_K^{1/2} \boldsymbol{R}_{1,K} \|^2_{0,K} = \int_K \boldsymbol{R}_{1,K} \cdot  \boldsymbol{W}_b  \\ & 
    = \int_K \left( \theta_h \boldsymbol{f}_h + 2 \nu \nabla \cdot \varepsilon (\boldsymbol{u}_h) - \boldsymbol{u}_h \cdot \nabla \boldsymbol{u}_h - \nabla p_h  \right) \cdot \boldsymbol{W}_b.
    \end{align*}
Noting that $\left( \theta \boldsymbol{f} + 2 \nu \nabla \cdot \varepsilon ( \boldsymbol{u}) - \boldsymbol{u} \cdot \nabla \boldsymbol{u} - \nabla p \right)|_K = 0$ for the exact solution $(\boldsymbol{u}, p)$, we simply subtract, then integrate by parts and note that $\boldsymbol{W}_b|_{\partial K} = \boldsymbol{0}$, to give
\begin{align*}
 \frac{1}{C^2} \| \boldsymbol{R}_{1,K} \|_{0,K}^2 &\leq \int_K \theta_h \left( \boldsymbol{f}_h - \boldsymbol{f} \right) \cdot \boldsymbol{W}_b \,  
    + \int_K \left[ -2 \nu \nabla \cdot \varepsilon  (\boldsymbol{u} - \boldsymbol{u}_h) + \nabla (p - p_h) \right] \cdot \boldsymbol{W}_b \,   \\ & \quad +  \int_K \left[  \left( \boldsymbol{u} - \boldsymbol{u}_h \right) \cdot \nabla  \boldsymbol{u} +  \boldsymbol{u}_h \cdot \nabla  \left( \boldsymbol{u} - \boldsymbol{u}_h \right) \right] \cdot \boldsymbol{W}_b - \int_K \boldsymbol{f} (\theta - \theta_h) \cdot \boldsymbol{W}_b \leq   T_1 + T_2,
\end{align*}
where 
\begin{align*}
      T_1 &= \int_K \bigg( 2 \nu \varepsilon (\boldsymbol{u}- \boldsymbol{u}_h ) :  \varepsilon ({W}_b) - (p-p_h) \nabla \cdot \boldsymbol{W}_b \bigg)  + \int_K  \theta_h ( \boldsymbol{f}_h - \boldsymbol{f} ) \cdot \boldsymbol{W}_b \\& \quad - \int_K \boldsymbol{f} (\theta - \theta_h) \cdot \boldsymbol{W}_b, \\
      T_2 &=  \int_K \left[ \left( \boldsymbol{u} - \boldsymbol{u}_h \right) \cdot \nabla  \boldsymbol{u} +  \boldsymbol{u}_h \cdot \nabla  \left( \boldsymbol{u} - \boldsymbol{u}_h \right) \right] \cdot \boldsymbol{W}_b .
\end{align*}
Using the Cauchy-Schwarz inequality and \eqref{interior_bubb}, we have 
\begin{align*}
T_1 &\leq  C_1 \left( \| \boldsymbol{u} - \boldsymbol{u}_h \|_{1,K} + \|p-p_h\|_{0,K} + \|\theta -\theta_h\|_{1,K} + h_K \theta_h \|\boldsymbol{f}- \boldsymbol{f}_h \|_{0,K} \right) h_K^{-1} \| \boldsymbol{R}_{1,K}\|_{0,K} \\&
\leq  C_1 \left( \| \boldsymbol{u} - \boldsymbol{u}_h \|_{1,K}^2 + \|p-p_h\|_{0,K}^2 + \|\theta -\theta_h\|_{1,K}^2 + h_K^2 \|\boldsymbol{f}- \boldsymbol{f}_h \|_{0,K}^2 \right)^{1/2} \left(h_K^{-2} \| \boldsymbol{R}_{1,K}\|_{0,K}^2\right)^{1/2}, \\[4pt]
 T_2 & \leq C_2 \| \boldsymbol{u} - \boldsymbol{u}_h \|_{1,K} h_K^{-1} \|\boldsymbol{R}_{1,K} \|_{0,K},
\end{align*}
and combining these bounds leads to the first stated result. The other two bounds follow similarly. 
\end{proof}
Let $E$ denote an interior facet that is shared by two elements $K$ and $K^{\prime}$. Let $\omega_E$ be the patch corresponding to the
union between $K$ and $K^{\prime}$. Next, we define the facet bubble function $\zeta_E$ on $E$ with the property that it is positive
in the interior of the patch $\omega_E$ and zero on the boundary of the patch. From Verfürth \cite{MR3059294}, the following
results hold:
\begin{align}\label{edge_bubb}
\|q\|_{0,E} &\le C \|\zeta_E^{1/2} q\|_{0,E}, \quad
\|\zeta_E q\|_{0,K} &\le C h_e^{1/2} \|q\|_{0,E}, 
\quad \|\nabla(\zeta_E q)\|_{0,K} \le C h_e^{-1/2} \|q\|_{0,E},
\quad \text{for all } K \in \omega_E. 
\end{align}
\begin{lemma}\label{efficiency_2}
    Let $K$ be an element of $\mathcal{K}_h$. The jump residual satisfies 
\begin{align*}
   h_E^{1/2} \boldsymbol{R}_{1,E} &\leq C \sum_{K \in \omega_E} \left( \| \boldsymbol{u} - \boldsymbol{u}_h \|_{1,K} + \|p-p_h\|_{0,K} + \| \theta - \theta_h \|_{1,K} + h_K \theta_h \| \boldsymbol{f}- \boldsymbol{f}_h \|_{0,K} \right), \\
     h_E^{1/2} \boldsymbol{R}_{2,E} &\leq C \sum_{K \in \omega_E}\left( \| \boldsymbol{u} - \boldsymbol{u}_h \|_{1,K} + \|\theta -\theta_h\|_{1,K} + h_K \| g - g_h \|_{0,K} \right).
\end{align*}
    Moreover, we also have
    $$
\Psi_{R_E} \leq \sum_{E \in \partial K} \sum_{K \in \omega_E} \left(\left|\!\left|\!\left| (\boldsymbol{u}-\boldsymbol{u}_h , p -p_h, \theta - \theta_h)\right|\!\right|\!\right| + h_K \| \theta_h (\boldsymbol{f} - \boldsymbol{f}_h ) \|_{0,K} +h_K \|g-g_h\|_{0,K}  \right).
    $$
\end{lemma}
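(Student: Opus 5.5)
We follow the standard Verfürth edge-bubble technique, mirroring the proof of Lemma~\ref{efficiency_1}. Fix an interior facet $E\in\mathcal{E}_\Omega$ shared by $K$ and $K'$, with patch $\omega_E$. Since $\boldsymbol{R}_{1,E}$ is a polynomial on $E$, we extend it to $\omega_E$ (constant along a fixed direction, or by the usual polynomial lifting) and set $\boldsymbol{W}_E=\zeta_E\boldsymbol{R}_{1,E}$. This test function vanishes on $\partial\omega_E$, so it belongs to $\boldsymbol{H}^1_0(\omega_E)$, extended by zero, and in particular to $\mathrm{V}_0$. By the first estimate in \eqref{edge_bubb},
\[
\tfrac{1}{C^2}\|\boldsymbol{R}_{1,E}\|_{0,E}^2\le \int_E \boldsymbol{R}_{1,E}\cdot\boldsymbol{W}_E\,ds .
\]

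Next we rewrite the facet integral through elementwise integration by parts on $K$ and $K'$. For the exact solution, $(p\boldsymbol{I}-2\nu\varepsilon(\boldsymbol{u}))\boldsymbol{n}$ has zero jump across $E$, so the jump of the discrete flux equals minus the jump of the error flux. Integrating by parts on each element of $\omega_E$ therefore gives
\[
\int_E \llbracket (p_h\boldsymbol{I}-2\nu\varepsilon(\boldsymbol{u}_h))\boldsymbol{n}\rrbracket\cdot\boldsymbol{W}_E
=\sum_{K\subset\omega_E}\Big(\int_K 2\nu\varepsilon(\boldsymbol{u}-\boldsymbol{u}_h):\varepsilon(\boldsymbol{W}_E)-\int_K (p-p_h)\nabla\cdot\boldsymbol{W}_E+\int_K \boldsymbol{r}_K\cdot\boldsymbol{W}_E\Big),
\]
up to signs. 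Here $\boldsymbol{r}_K$ collects the elementwise strong residual written in error form. Using the exact momentum equation on $K$, $\boldsymbol{r}_K$ equals $\boldsymbol{R}_{1,K}$ plus the data term $\alpha\theta_h(\boldsymbol{f}-\boldsymbol{f}_h)$, the buoyancy error $\alpha\boldsymbol{f}(\theta-\theta_h)$, and the convective differences $(\boldsymbol{u}-\boldsymbol{u}_h)\cdot\nabla\boldsymbol{u}+\boldsymbol{u}_h\cdot\nabla(\boldsymbol{u}-\boldsymbol{u}_h)$.

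We then bound each term by Cauchy--Schwarz together with \eqref{edge_bubb}, using $\|\boldsymbol{W}_E\|_{0,K}\lesssim h_E^{1/2}\|\boldsymbol{R}_{1,E}\|_{0,E}$ and $\|\nabla\boldsymbol{W}_E\|_{0,K}\lesssim h_E^{-1/2}\|\boldsymbol{R}_{1,E}\|_{0,E}$.
\begin{itemize}
\item The viscous and pressure terms give $h_E^{-1/2}\big(\|\boldsymbol{u}-\boldsymbol{u}_h\|_{1,K}+\|p-p_h\|_{0,K}\big)\|\boldsymbol{R}_{1,E}\|_{0,E}$.
\item The $\boldsymbol{R}_{1,K}$ term gives $h_E^{1/2}\|\boldsymbol{R}_{1,K}\|_{0,K}\|\boldsymbol{R}_{1,E}\|_{0,E}$. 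We absorb it with Lemma~\ref{efficiency_1}, using $h_E\simeq h_K$ from shape regularity.
\item The convective terms are handled by H\"older with $L^4$ Sobolev embeddings, as in $T_2$ of Lemma~\ref{efficiency_1}, using $\|\boldsymbol{u}\|_1$, $\|\boldsymbol{u}_h\|_{1,h}$ bounded (Theorem~\ref{van}) and a local Poincar\'e/inverse estimate for $\boldsymbol{W}_E$.
\item The buoyancy term is bounded by $\|\boldsymbol{f}\|_{0}$ times $\|\theta-\theta_h\|_{1,K}$, again via the $L^4$ embedding.
\end{itemize}
Dividing by $\|\boldsymbol{R}_{1,E}\|_{0,E}$ and multiplying by $h_E^{1/2}$ gives the first bound. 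The bound for $\boldsymbol{R}_{2,E}$ is the same argument with $W_E=\zeta_E R_{2,E}$, the temperature equation, and Lemma~\ref{efficiency_1} for $\boldsymbol{R}_{2,K}$. There is no pressure term there, and the convective term $\boldsymbol{u}\cdot\nabla\theta-\boldsymbol{u}_h\cdot\nabla\theta_h$ is split the same way.

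Finally, squaring, summing over $E\in\partial K$, and controlling the local norms by the global $\left|\!\left|\!\left|\cdot\right|\!\right|\!\right|$ norm gives the bound on $\Psi_{R_E}$. Boundary facets carry no $\boldsymbol{R}_E$ contribution by definition, so only interior facets need treatment.

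The main obstacle is the nonlinear terms. The convective bound needs $\boldsymbol{W}_E\in L^4(K)$ with a scaling $\|\boldsymbol{W}_E\|_{L^4(K)}\lesssim\|\nabla\boldsymbol{W}_E\|_{0,K}$, which holds only up to an $h$-dependent factor. We would argue that $\|\boldsymbol{W}_E\|_{0,4,K}\lesssim h_K^{d/4-d/2}\|\boldsymbol{W}_E\|_{0,K}$ by an inverse estimate, combined with $\|\boldsymbol{W}_E\|_{0,K}\lesssim h_E^{1/2}\|\boldsymbol{R}_{1,E}\|$. For $d\le3$ this yields at least the required $h_E^{-1/2}$ scaling after multiplication by $h_E^{1/2}$. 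The second difficulty is that the polynomial extension of $\boldsymbol{R}_{1,E}$ must be chosen so that the bubble estimates \eqref{edge_bubb} hold on both elements of $\omega_E$; we take the standard extension from \cite{MR3059294}.
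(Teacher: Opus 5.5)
Your proposal follows the paper's proof essentially step for step: an edge-bubble test function supported on $\omega_E$, elementwise integration by parts using $\llbracket (p\boldsymbol{I}-2\nu\varepsilon(\boldsymbol{u}))\boldsymbol{n}\rrbracket_E=\boldsymbol{0}$ and the exact momentum equation, and the same five contributions (element residual absorbed via Lemma~\ref{efficiency_1}, data oscillation, buoyancy error, viscous--pressure term, convective difference) bounded with the scalings \eqref{edge_bubb}; the only cosmetic difference is that the paper scales the bubble test function by $h_E/(2\nu)$. Your handling of the convective and buoyancy terms is more explicit than the paper's, which simply asserts those bounds, but one gap remains there: for $(\boldsymbol{u}-\boldsymbol{u}_h)\cdot\nabla\boldsymbol{u}$ and $\alpha\boldsymbol{f}(\theta-\theta_h)$ with only $\nabla\boldsymbol{u},\boldsymbol{f}\in L^2$, your inverse estimate on $\boldsymbol{W}_E$ combined with a local embedding of the error leaves a factor $h^{1-d/2}$ in front of the $L^2(K)$ error, which is harmless for $d=2$ but not for $d=3$, so in 3D you need either a global embedding for the error or slightly more regularity (e.g.\ $\nabla\boldsymbol{u},\boldsymbol{f}\in L^4$).
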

\begin{proof}
   Suppose $E$ is an interior facet (edge) and recall that the classical solution $(\boldsymbol{u},p)$ satisfies \\ $ \llbracket \left(p \boldsymbol{I} - 2 \nu \varepsilon (\boldsymbol{u}) \right) \boldsymbol{n}\rrbracket |_E =0. $ Now, define the localised jump term $\boldsymbol{W}_E =
   \sum_{E \in \partial K} \frac{h_E}{2 \nu} \boldsymbol{R}_E \zeta_E $. Using \eqref{interior_bubb} gives 
 \begin{align*}
    \frac{h_E}{\nu} \|\boldsymbol{R}_E \|_{0,E}^2 &\leq C \left( \llbracket p_h \boldsymbol{I} - 2 \nu \varepsilon (\boldsymbol{u}_h )\rrbracket , \boldsymbol{W}_E \right)_E
    \\ & \leq C \left( \llbracket \left(p_h \boldsymbol{I} - 2 \nu \varepsilon (\boldsymbol{u}_h) \right) \boldsymbol{n}\rrbracket - \llbracket \left(p \boldsymbol{I} - 2 \nu \varepsilon (\boldsymbol{u}) \right) \boldsymbol{n}\rrbracket , \boldsymbol{W}_E \right)_E.
\end{align*}
Using integration by parts on each element of the patch $\omega_E$ implies
\begin{align}\label{jump_bound}
\left( \llbracket \left(p_h \boldsymbol{I} - 2 \nu \varepsilon (\boldsymbol{u}_h) \right) \boldsymbol{n}\rrbracket - \llbracket \left(p \boldsymbol{I} - 2 \nu \varepsilon (\boldsymbol{u}) \right) \boldsymbol{n}\rrbracket , \boldsymbol{W}_E \right)_E &= \sum_{K \in \omega_E} \left\{ \int_K \left[ - 2 \nu \nabla \cdot \varepsilon (\boldsymbol{u}- \boldsymbol{u}_h) + \nabla (p-p_h) \right] \cdot \boldsymbol{W}_E \right. \nonumber  \\
& \quad + \left. \int_K \left[ -2 \nu \varepsilon (\boldsymbol{u} - \boldsymbol{u}_h ) + (p-p_h) \boldsymbol{I} \right] : \nabla \boldsymbol{W}_E \right\}.
\end{align}
Since the exact solution $(\boldsymbol{u},p)$ satisfies $-2 \nu \nabla \cdot \varepsilon ( \boldsymbol{u}) + \boldsymbol{u} \cdot \nabla \boldsymbol{u} +  \nabla p |_K = \theta \boldsymbol{f}|_K$, we have 
\begin{align}\label{bound_eq}
     \frac{h_E}{\nu} \|\boldsymbol{R}_E \|_{0,E}^2 & =  \sum_{K \in \omega_E} \int_K \{ \theta_h \boldsymbol{f}_h + 2 \nu \nabla \cdot \varepsilon (\boldsymbol{u}_h) - \boldsymbol{u}_h \cdot \nabla \boldsymbol{u}_h - \nabla p_h \} \cdot \boldsymbol{W}_E + \sum_{K \in \omega_E} \int_K \theta_h (\boldsymbol{f} - \boldsymbol{f}_h) \cdot \boldsymbol{W}_E \nonumber  \\ & \quad + \sum_{K \in \omega_E} \int_K \boldsymbol{f} (\theta -\theta_h) \cdot \boldsymbol{W}_E + \sum_{K \in \omega_E} \int_K \{ - 2 \nu \varepsilon (  \boldsymbol{u} - \boldsymbol{u}_h) +(p-p_h) \boldsymbol{I} \} : \nabla \boldsymbol{W}_E \nonumber  \\ & \quad + \sum_{K \in \omega_E} \int_K \{ \boldsymbol{u}_h \cdot \nabla \boldsymbol{u}_h - \boldsymbol{u} \cdot \nabla \boldsymbol{u}\} \cdot \boldsymbol{W}_E =
     T_1 + T_2 + T_3 +T_4 +T_5 .
\end{align}
These four terms will be bounded separately.
First, using the definition of $\boldsymbol{R}_{1,K}$, and then combining the Cauchy-Schwarz inequality with \eqref{edge_bubb}, gives
\begin{align*}
    T_1 &\leq C_1 \left(\sum_{K \in \omega_E} h_K^2 \| \boldsymbol{R}_{1,K}  \|^2_{0,K} \right)^{1/2} \left( \sum_{K \in \omega_E}   h_K^{-2} \|\boldsymbol{W}_E \|^2_{0,K} \right)^{1/2}, \\
    T_2 & \leq C_2\left( \sum_{K \in \omega_E} h_K^2 \| \boldsymbol{f} - \boldsymbol{f}_h \|_{0,K}^2  \right)^{1/2} \left( \sum_{K \in \omega_E} h_K^{-2} \|\boldsymbol{W}_E \|^2_{0,K} \right)^{1/2}, \\
    T_3 & \leq C_3 \left( \sum_{K \in \omega_E} \| \theta - \theta_h \|_{1,K}^2  \right)^{1/2} \left( \sum_{K \in \omega_E} h_E  \|\boldsymbol{R}_E \|^2_{0,K} \right)^{1/2}.
\end{align*}
Next, given the shape regularity of the grid, using the definition of $\boldsymbol{W}_b$ and \eqref{edge_bubb} gives
$$
h_K^{-2} \|\boldsymbol{W}_E \|^2_{0,K}  \leq h_E^{-2} \| \boldsymbol{W}_E \|_{0,K}^2 \leq h_E^{-1} \left\| h_E \boldsymbol{R}_E \right\|_{0,E}^2.  
$$
Hence, the following estimate holds 
\begin{align*}
    T_1 &\leq C_1 \left(\sum_{K \in \omega_E} h_K^2 \| \boldsymbol{R}_{1,K}  \|^2_{0,K} \right)^{1/2} \left( \sum_{K \in \omega_E} h_E\|\boldsymbol{R}_E \|^2_{0,K} \right)^{1/2}
    \\ & \leq  C_1 \left(\sum_{K \in \omega_E}  \| \boldsymbol{u} - \boldsymbol{u}_h \|^2_{1,K} + \|p-p_h\|_{0,K}^2  \right)^{1/2} \left( \sum_{K \in \omega_E} h_E \|\boldsymbol{R}_E \|^2_{0,K} \right)^{1/2}, \\
    T_2 & \leq C_2 \left(\sum_{K \in \omega_E} h_K^2 \| \boldsymbol{f} - \boldsymbol{f}_h \|_{0,K}^2  \right)^{1/2} \left( \sum_{K \in \omega_E} h_E  \|\boldsymbol{R}_E \|^2_{0,K} \right)^{1/2}.
\end{align*}
Hence, 
\begin{align*}
    T_1 +T_2 \lesssim \left(\sum_{K \in \omega_E}  \| \boldsymbol{u} - \boldsymbol{u}_h \|^2_{1,K} + \|p-p_h\|_{0,K}^2  + h_K^2 \| \boldsymbol{f} - \boldsymbol{f}_h \|_{0,K}^2  \right)^{1/2} \left( \sum_{K \in \omega_E} h_E \|\boldsymbol{R}_E \|^2_{0,K} \right)^{1/2}.
\end{align*}
Similarly, 
\begin{align*}
 T_4 &\leq C_4 \left( \sum_{K \in \omega_E} \| \boldsymbol{u} - \boldsymbol{u}_h \|^2_{1,K} + \|p-p_h\|_{0,K}^2   \right)^{1/2}  \left(  \sum_{K \in \omega_E} \|\nabla \boldsymbol{W}_E\|^2_{0,E} \right)^{1/2},
   \end{align*}
   where the second term is bounded using \eqref{edge_bubb} i.e.
$\|\nabla \boldsymbol{W}_E\|^2_{0,K} \lesssim h_E^{-1} \left\| \frac{h_E}{\nu} \boldsymbol{R}_E\right\|^2_{0,E} $. This implies 
\begin{align*}
T_4 & \leq C_4 \left( \sum_{K \in \omega_E} \| \boldsymbol{u} - \boldsymbol{u}_h \|^2_{1,K} + \|p-p_h\|_{0,K}^2   \right)^{1/2}  \left(  \sum_{K \in \omega_E} h_E \| \boldsymbol{R}_E\|^2_{0,E} \right)^{1/2}, \\
T_5 &\leq C_5 \left( \sum_{K \in \omega_E} \|\boldsymbol{u} - \boldsymbol{u}_h \|_{1,K}^2 \right)^{1/2} \left(  \sum_{K \in \omega_E} h_E \| \boldsymbol{R}_E\|^2_{0,E} \right)^{1/2}.
   \end{align*}
   Combining the bounds of $T_1$, $T_2$, $T_3$, $T_4$ and $T_5$ with \eqref{jump_bound} and \eqref{bound_eq} implies the first stated result. Similarly, we can prove the second bound.  
\end{proof}
\begin{lemma}\label{efficiency_3}
Let $K$ be an element of $\mathcal{K}_h$. The local trace residual satisfies
\begin{align*}
    \Psi_{J_1} & \lesssim \left( \| \boldsymbol{u} - \boldsymbol{u}_h \|_{1,K} + \|p-p_h\|_{0,K}  \right), \\
    \Psi_{J_2} & \lesssim \left( \| \theta - \theta_h \|_{1,K} \right).
\end{align*}
Moreover, we also have 
$$
 \Psi_{J_K} \lesssim \left( \| \boldsymbol{u} - \boldsymbol{u}_h \|_{1,K} + \|p-p_h\|_{0,K} + \| \theta - \theta_h \|_{1,K} \right).
 $$
\begin{proof}
Noting the conditions 
\begin{align*}
 \left.  \boldsymbol{u} \cdot \boldsymbol{n} \right|_E = 0, \quad \left. \left[ \mathbb{T}(\boldsymbol{u},p) \boldsymbol{n} \right]_{\tau} + \gamma \boldsymbol{u} \right|_E = 0 & \quad \forall E \in \Gamma_W, \\
 \left.  \mathbb{T}(\boldsymbol{u},p) \boldsymbol{n} \right|_E  = 0 & \quad  \forall  E \in \Gamma_O, \\
 \boldsymbol{u} = \boldsymbol{u}_{\star} & \quad  \forall  E \in \Gamma_I,
\end{align*}
where $(\boldsymbol{u},\theta,\theta,p)$ is the regular solution of \eqref{bb}, it follows that
$$
\begin{aligned}
 \Psi_{J_1}^2 &= \sum_{E \in \gamma_I} h_E   \| \bigl(p_h \boldsymbol{I}
- 2 \nu \varepsilon(\boldsymbol{u}_h)\bigr)\boldsymbol{n} \|_{0,E}^2 + \sum_{E \in \gamma_W} h_E   \|  \left[ \mathbb{T}(\boldsymbol{u}_h,p_h) \boldsymbol{n} \right]_{\tau} + \gamma \boldsymbol{u}_h \|_{0,E}^2 + \sum_{E \in \gamma_O} h_E   \| \mathbb{T}(\boldsymbol{u}_h,p_h) \boldsymbol{n} \|_{0,E}^2 \\ &  \quad  + \sum_{E \in \Gamma_W} h_E^{-1} \| \boldsymbol{u}_h \cdot \boldsymbol{n} \|^2_{0,E} + \sum_{E \in \Gamma_I} h_E^{-1} \| \boldsymbol{u}_h - \boldsymbol{u}^{\star} \|^2_{0,E} \\
& = \sum_{E \in \gamma_I} h_E   \| \bigl((p-p_h) \boldsymbol{I}
- 2 \nu \varepsilon(\boldsymbol{u} - \boldsymbol{u}_h)\bigr)\boldsymbol{n} \|_{0,E}^2 + \sum_{E \in \gamma_W} h_E   \|  \left[ \mathbb{T}(\boldsymbol{u}-\boldsymbol{u}_h,p-p_h) \boldsymbol{n} \right]_{\tau} + \gamma (\boldsymbol{u}-\boldsymbol{u}_h ) \|_{0,E}^2 \\ &  \quad  + \sum_{E \in \gamma_O} h_E   \| \mathbb{T}(\boldsymbol{u} - \boldsymbol{u}_h,p-p_h) \boldsymbol{n} \|_{0,E}^2  + \sum_{E \in \Gamma_W} h_E^{-1} \| ( \boldsymbol{u}-\boldsymbol{u}_h) \cdot \boldsymbol{n} \|^2_{0,E} + \sum_{E \in \Gamma_I} h_E^{-1} \| \boldsymbol{u} - \boldsymbol{u}_h \|^2_{0,E} .
\end{aligned}
$$
This implies 
$$
 \Psi_{J_1}  \lesssim \left( \| \boldsymbol{u} - \boldsymbol{u}_h \|_{1,K} + \|p-p_h\|_{0,K}  \right).
$$
Similarly, we can prove the second bound.
\end{proof}
\end{lemma}
\begin{theorem}
  Let $(\boldsymbol{u}, p, \theta)$ and $\left(\boldsymbol{u}_h, p_h, \theta_h \right)$ be the unique solutions of problems \eqref{bb} and \eqref{E}, respectively. Let $\Psi$ be defined as in \eqref{total_estimate}. Then, there exists a constant $C>0$ that is independent of $h$ such that
$$
\Psi \leq C\left( \left|\!\left|\!\left| (\boldsymbol{u}-\boldsymbol{u}_h , p -p_h, \theta - \theta_h)\right|\!\right|\!\right|    +\left(\sum_{K \in \mathcal{K}_h} h_K^2\left\|\boldsymbol{f}-\boldsymbol{f}_h\right\|_{0, K}^2 +h_K^2 \|g-g_h\|^2_{0,K}\right)^{1 / 2}\right) .
$$
\end{theorem}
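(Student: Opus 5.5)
The plan is to assemble the global efficiency bound from the three local efficiency results, Lemma~\ref{efficiency_1}, Lemma~\ref{efficiency_2} and Lemma~\ref{efficiency_3}, and then sum over the mesh. Recall that $\Psi_K^2 = \Psi_{R_K}^2 + \Psi_{R_E}^2 + \Psi_{J_K}^2$. We bound each piece locally by the error restricted to a small patch plus data oscillation, square, sum over $K \in \mathcal{K}_h$, and take square roots.

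\emph{Step 1 (interior residuals).} By Lemma~\ref{efficiency_1}, for each $K$,
\[
\Psi_{R_K}^2 \lesssim \| \boldsymbol{u}-\boldsymbol{u}_h\|_{1,K}^2 + \|p-p_h\|_{0,K}^2 + \|\theta-\theta_h\|_{1,K}^2 + h_K^2\|\boldsymbol{f}-\boldsymbol{f}_h\|_{0,K}^2 + h_K^2\|g-g_h\|_{0,K}^2 .
\]
Here $\theta_h$ in the oscillation term is handled as follows. Since $\theta_h$ is bounded by Theorem~\ref{van}, and in the discrete setting $\|\theta_h\|_{L^\infty}$ can be controlled, the factor $\theta_h$ in $h_K\|\theta_h(\boldsymbol{f}-\boldsymbol{f}_h)\|_{0,K}$ can be absorbed into the constant $C$. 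If one prefers to avoid $L^\infty$ bounds, one keeps $\|\theta_h(\boldsymbol{f}-\boldsymbol{f}_h)\|$ as the oscillation term; this mismatch with the stated $\|\boldsymbol{f}-\boldsymbol{f}_h\|$ is the point to be careful about.

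\emph{Step 2 (interior jumps).} Lemma~\ref{efficiency_2} gives, for each interior face $E\subset\partial K$, a bound on $h_E\|\boldsymbol{R}_{1,E}\|_{0,E}^2 + h_E\|\boldsymbol{R}_{2,E}\|_{0,E}^2$ by the same local quantities summed over the patch $\omega_E$.

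\emph{Step 3 (boundary terms).} Lemma~\ref{efficiency_3} bounds $\Psi_{J_K}$ by the local errors on the element(s) adjacent to boundary faces. Here one uses the boundary conditions satisfied by the exact solution to rewrite every trace residual as a trace of the error. The $h_E^{-1}$-weighted terms $h_E^{-1}\|\boldsymbol{u}_h\cdot\boldsymbol{n}\|_{0,E}^2$ and $h_E^{-1}\|\boldsymbol{u}_h-\boldsymbol{u}_\star\|_{0,E}^2$ (and their temperature analogue) are exactly the boundary contributions built into $|\!|\!|\cdot|\!|\!|$. The $h_E$-weighted flux terms are bounded via the discrete trace inequality, with the nonlinear outlet term $(\boldsymbol{u}_h\cdot\boldsymbol{n})\theta_h\psi(\boldsymbol{u}_h\cdot\boldsymbol{n})$ compared to its exact counterpart using Lipschitz continuity and boundedness of $\psi$.

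\emph{Step 4 (summation).} Sum the squared local bounds over all $K$. By shape regularity, each element belongs to a uniformly bounded number of patches $\omega_E$, so the patch-wise sums overlap only finitely many times. The local error norms therefore add up to $C\,|\!|\!|(\boldsymbol{u}-\boldsymbol{u}_h,p-p_h,\theta-\theta_h)|\!|\!|^2$, and the oscillations to $\sum_K h_K^2(\|\boldsymbol{f}-\boldsymbol{f}_h\|_{0,K}^2+\|g-g_h\|_{0,K}^2)$. Taking square roots yields the claim.

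The main obstacle is Step 3. Flux residuals such as $h_E\|\mathbb{T}(\boldsymbol{u}-\boldsymbol{u}_h,p-p_h)\boldsymbol{n}\|_{0,E}^2$ involve traces of gradients of the error, which the energy norm does not control directly for non-discrete functions. The first approach to try is the standard face-bubble argument: extend $h_E\boldsymbol{J}_{1,K}\zeta_E$ into the single adjacent element, integrate by parts against the exact equation, and use \eqref{edge_bubb} together with the already-established bound on $\Psi_{R_K}$. For the outlet, the nonlinear term then only contributes $\|\boldsymbol{u}-\boldsymbol{u}_h\|_{1,K}+\|\theta-\theta_h\|_{1,K}$, via trace inequalities and the boundedness of $\theta$ and $\boldsymbol{u}_h$.
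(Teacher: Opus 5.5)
Your overall plan matches the paper's proof, which simply combines Lemmas~\ref{efficiency_1}, \ref{efficiency_2} and \ref{efficiency_3}; summation over the mesh with finite patch overlap is left implicit there. You are right that flux residuals such as $h_E\|\mathbb{T}(\boldsymbol{u}-\boldsymbol{u}_h,p-p_h)\boldsymbol{n}\|_{0,E}^2$ are not controlled by $\|\boldsymbol{u}-\boldsymbol{u}_h\|_{1,K}+\|p-p_h\|_{0,K}$. This is exactly the unjustified step in the proof of Lemma~\ref{efficiency_3}, and a face-bubble argument is the right repair on $\mathcal{E}_W$ and $\mathcal{E}_O$, modulo the nonlinear outlet flux discussed below.

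\textbf{The gap: inlet flux terms.} Step 3 claims the boundary conditions let you rewrite \emph{every} trace residual as a trace of the error. This fails on inlet faces $E\in\mathcal{E}_I$. There the estimator contains $h_E\|\mathbb{T}(\boldsymbol{u}_h,p_h)\boldsymbol{n}\|_{0,E}^2$ (this is $h_E\|\boldsymbol{J}_{1,K}\|_{0,E}^2$) and $h_E\|\kappa\nabla\theta_h\cdot\boldsymbol{n}\|_{0,E}^2$. On $\Gamma_I$ the exact solution only satisfies $\boldsymbol{u}=\boldsymbol{u}_\star$ and $\theta=\theta_\star$. Its traction $\mathbb{T}(\boldsymbol{u},p)\boldsymbol{n}$ and heat flux $\kappa\nabla\theta\cdot\boldsymbol{n}$ are not prescribed there and are generically nonzero. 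So in your face-bubble identity the term $\int_E\mathbb{T}(\boldsymbol{u},p)\boldsymbol{n}\cdot\boldsymbol{W}$ neither vanishes nor is an error quantity.

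No argument can fix this. By trace and inverse inequalities, $\bigl(\sum_{E\in\mathcal{E}_I}h_E\|\mathbb{T}(\boldsymbol{u}-\boldsymbol{u}_h,p-p_h)\boldsymbol{n}\|_{0,E}^2\bigr)^{1/2}=O(h)$ for a smooth solution. Hence on quasi-uniform meshes $\Psi\geq c\,h^{1/2}\|\mathbb{T}(\boldsymbol{u},p)\boldsymbol{n}\|_{0,\Gamma_I}-Ch$. The right-hand side of the claimed bound, however, is $O(h^2)$ for smooth solution and data (Theorem~\ref{101} with $l=k=2$).

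Relying on Lemma~\ref{efficiency_3} does not close this. Its proof tacitly uses $\mathbb{T}(\boldsymbol{u},p)\boldsymbol{n}=\boldsymbol{0}$ and $\kappa\nabla\theta\cdot\boldsymbol{n}=0$ on $\Gamma_I$, and neither condition is part of \eqref{nsstokes0}. The bound becomes provable only after the inlet flux terms are removed from $\boldsymbol{J}_{1,K}$ and $\boldsymbol{J}_{2,K}$. On $\mathcal{E}_I$ one would keep just $h_E^{-1}\|\boldsymbol{u}_h-\boldsymbol{u}_\star\|_{0,E}^2$ and $h_E^{-1}\|\theta_h-\theta_\star\|_{0,E}^2$, which are error terms exactly as you say.

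\textbf{Two smaller steps need more than you give.}

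In Step 1, Theorem~\ref{van} only bounds $\theta_h$ in the discrete $H^1$ norm. Discrete Sobolev and inverse inequalities then give merely $\|\theta_h\|_{L^\infty(\Omega)}\lesssim(1+|\log h|)^{1/2}$ in 2D and $\lesssim h^{-1/2}$ in 3D. So $\theta_h$ cannot be absorbed into an $h$-independent constant. Either keep $h_K\|\theta_h(\boldsymbol{f}-\boldsymbol{f}_h)\|_{0,K}$ as the oscillation, or let $C$ depend on $\|\theta_h\|_{L^\infty(\Omega)}$.

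In Step 3, the outlet temperature residual $\kappa\nabla\theta_h\cdot\boldsymbol{n}-(\boldsymbol{u}_h\cdot\boldsymbol{n})\theta_h\psi(\boldsymbol{u}_h\cdot\boldsymbol{n})$ is not a polynomial. The norm equivalence \eqref{edge_bubb} therefore applies only to a polynomial projection of it, which produces a boundary oscillation term absent from the statement. In addition, the Lipschitz comparison with $(\boldsymbol{u}\cdot\boldsymbol{n})\theta\psi(\boldsymbol{u}\cdot\boldsymbol{n})$ needs $L^\infty$-type control of both $\boldsymbol{u}_h$ and $\theta_h$ on $\Gamma_O$. The a priori bounds do not supply this uniformly in $h$.
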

\begin{proof}
     Combining Lemmas \ref{efficiency_1}, \ref{efficiency_2} and \ref{efficiency_3} implies the stated result.
\end{proof}

\section{Numerical Tests}\label{section6}
All routines have been implemented using the open-source finite element library FEniCS \cite{alnaes2015fenics}. In some experiments, we employ uniform meshes, while in others, we use adaptive mesh refinement. Specifically, starting with an initial mesh $\mathcal{K}_{0,\Omega}$, we apply the iterative refinement loop
\begin{align*} 
\text{Solve} \rightarrow \text{Estimate} \rightarrow \text{Mark} \rightarrow \text{Refine} 
\end{align*}
to generate a sequence of (nested) regular meshes $\left\{\mathcal{K}_{\ell}\right\}$ with mesh size $h_{\ell}$. At each step, we compute the local error estimators $\Psi_{K}$ for all $K$ over the previous mesh $\mathcal{K}_h$ and refine those elements $K \in \mathcal{K}_h$ according to
\begin{align*} 
\Psi_{K} \geq \eta \max\{\Psi_{K} : K \in \mathcal{K}_h\}, 
\end{align*}
where $\eta \in (0, 1)$ is a prescribed parameter. We assess the quality of the a posteriori error estimator through the \textit{effectivity index}, which is required to remain bounded as $h$ approaches zero and is defined by
\begin{align*}
\mathrm{Effec} := \frac{\Psi}{\|(\boldsymbol{u} - \boldsymbol{u}_h, \theta - \theta_h, p - p_h)\|}. 
\end{align*}
\subsection{Test 1: 2D Convergence test}
In our numerical experiment, the computational domain is taken as 
\(
\Omega = (-1,1)^2
\),
and we consider a sequence of uniformly refined square meshes.
We present a numerical test based on the following exact solution:
\begin{align*}
\boldsymbol{u}(x,y) = (\sin y,\cos x), \quad p(x,y) = 1 + \sin(xy), \quad
\theta(x,y) = 1 + \cos(xy).
\end{align*}
We compute the forcing terms and impose compensating boundary conditions in accordance with the exact solution. The boundary conditions in \eqref{nsstokes0} are prescribed on disjoint portions of $\partial \Omega$ as follows:
\begin{align*}
\eqref{nsstokes0}_1 &:\quad \{ (x,y) \in \partial \Omega : y = -1 \text{ or } y = 1 \}, \\
\eqref{nsstokes0}_2 &:\quad \{ (x,y) \in \partial \Omega : x = -1 \}, \\
\eqref{nsstokes0}_3 &:\quad \{ (x,y) \in \partial \Omega : x = 1 \}.
\end{align*}
The values of the parameters are $\alpha = 10$, $\gamma = 10$, $\kappa = 10$, and $\nu = 10$, and the Nitsche parameter is set to $\gamma_N = 1$.
We approximate the velocity and pressure using the Taylor--Hood element $\mathbb{P}_2$--$\mathbb{P}_1$, and the temperature using $\mathbb{P}_2$ elements. We observe from Table~\ref{table_2d_conv} that the approximation errors for the velocity, pressure, and temperature, as well as the corresponding convergence rates, are in good agreement with the theoretical results.
\begin{table}[h!]
\centering
\begin{footnotesize}
\caption{Errors, convergence rates, estimator, and efficiency under uniform refinement in 2D.}
\begin{tabular}{rcccccccccccr}
\toprule
DOF & $h$ & $\|\nabla(\boldsymbol{u}-\boldsymbol{u}_h)\|$ & rate 
& $\|p-p_h\|_{0}$ & rate 
& $\|\nabla(\theta-\theta_h)\|$ & rate
& $e(\text{total})$ & $r(\text{total})$
& $e(\Psi)$ & $r(\Psi)$
& Effec \\
\midrule
948     & 0.3536 & 8.3e-03 & --   & 1.2e-02 & --   & 2.6e-02 & --   & 1.5e-02 & --   & 6.7e-01 & --   & 45.18 \\
3556    & 0.1768 & 2.0e-03 & 2.13 & 2.9e-03 & 2.15 & 6.7e-03 & 2.07 & 3.6e-03 & 2.14 & 1.7e-01 & 2.09 & 46.60 \\
13764   & 0.0884 & 5.6e-04 & 1.91 & 7.2e-04 & 2.08 & 1.7e-03 & 2.04 & 9.5e-04 & 1.97 & 4.2e-02 & 2.04 & 44.61 \\
54148   & 0.0442 & 1.2e-04 & 2.21 & 1.8e-04 & 2.03 & 4.2e-04 & 2.02 & 2.2e-04 & 2.15 & 1.1e-02 & 2.02 & 48.71 \\
214788  & 0.0221 & 3.0e-05 & 2.05 & 4.9e-05 & 1.88 & 1.1e-04 & 2.00 & 5.7e-05 & 1.94 & 2.7e-03 & 2.01 & 46.53 \\
855556  & 0.0110 & 7.7e-06 & 1.96 & 2.3e-05 & 1.10 & 3.0e-05 & 1.81 & 2.4e-05 & 1.25 & 6.7e-04 & 2.00 & 27.69 \\
3415044 & 0.0055 & 3.3e-06 & 1.24 & 2.0e-05 & 0.19 & 1.7e-05 & 0.86 & 2.0e-05 & 0.25 & 1.7e-04 & 1.98 & 8.34 \\
\bottomrule
\end{tabular}
\label{table_2d_conv}
\end{footnotesize}
\end{table}

\subsection{Test 2: 3D Convergence test}
In our numerical experiment, the computational domain is taken as $\Omega = (0, 1)^3$, and we consider a sequence
of uniformly refined square meshes. We present a numerical test based on the following exact solution:
\begin{align*}
\boldsymbol{u}(x,y,z)&=(\sin(\pi x) \cos(\pi y) \cos(\pi z),-2 \cos(\pi x) \sin(\pi y) \cos(\pi z), \cos(\pi x) \cos(\pi y) \sin(\pi z)), \\
p(x,y,z) & = \sin(\pi x) \sin(\pi y) \sin(\pi z), \\
\theta(x,y,z) & = 1-\sin(\pi x) \cos(\pi y) \sin(\pi z).
\end{align*}
We compute the forcing terms and impose compensating boundary conditions in accordance with the exact solution. The boundary conditions in \eqref{nsstokes0} are prescribed on disjoint portions of $\partial \Omega$ as follows:
\begin{align*}
\eqref{nsstokes0}_1 &:\quad \{ (x,y,z) \in \partial \Omega : z = 0 \text{ or } z = 1 \}, \\
\eqref{nsstokes0}_2 &:\quad \{ (x,y,z) \in \partial \Omega : x = 0 \text{ or } x = 1 \}, \\
\eqref{nsstokes0}_3 &:\quad \{ (x,y,z) \in \partial \Omega : y = 0 \text{ or } y = 1 \}.
\end{align*}
The values of the parameters are $\alpha = 10$, $\gamma = 10$, $\kappa = 1$, and $\nu = 1$, and the Nitsche parameter is set to $\gamma_N = 50$.
We approximate the velocity and pressure using the Taylor--Hood element $\mathbb{P}_2$--$\mathbb{P}_1$, and the temperature using $\mathbb{P}_2$ elements. We observe from Table~\ref{table_3d_conv} that the approximation errors for the velocity, pressure, and temperature, as well as the corresponding convergence rates, are in good agreement with the theoretical results.
\begin{table}[h!]
\centering
\begin{footnotesize}
\caption{Errors, convergence rates, estimator, and efficiency under uniform refinement in 3D.}
\begin{tabular}{rcccccccccccr}
\toprule
DOF & $h$ 
& $\|\nabla(\boldsymbol{u}-\boldsymbol{u}_h)\|$ & rate 
& $\|p-p_h\|_{0}$ & rate 
& $\|\nabla(\theta-\theta_h)\|$ & rate
& $e(\text{total})$ & $r(\text{total})$
& $e(\Psi)$ & $r(\Psi)$
& Effec \\
\midrule
527     & 0.8660 & 1.4e+00 & --   & 5.6e-01 & --   & 5.6e-01 &  --  & 1.6e+00 &   -- & 1.4e+01 &   -- & 8.83 \\
3041    & 0.4330 & 4.0e-01 & 1.83 & 7.6e-02 & 2.88 & 1.6e-01 & 1.78 & 4.4e-01 & 1.89 & 3.5e+00 & 2.02 & 8.03 \\
20381   & 0.2165 & 1.1e-01 & 1.91 & 1.1e-02 & 2.83 & 4.4e-02 & 1.89 & 1.2e-01 & 1.92 & 7.9e-01 & 2.17 & 6.73 \\
148661  & 0.1083 & 2.8e-02 & 1.96 & 2.0e-03 & 2.43 & 1.1e-02 & 1.95 & 3.0e-02 & 1.96 & 1.7e-01 & 2.19 & 5.76 \\
1134437 & 0.0541 & 7.0e-03 & 1.98 & 4.5e-04 & 2.13 & 2.9e-03 & 1.98 & 7.6e-03 & 1.98 & 4.0e-02 & 2.11 & 5.28 \\
\bottomrule
\end{tabular}
\label{table_3d_conv}
\end{footnotesize}
\end{table}
\subsection{Test 3: Non-convex domains}
Consider the non-convex  L-shaped and T-shaped domains 
$$ 
\Omega_L = (-1,1)^2 \setminus (0,1)^2, \qquad
\Omega_T = \bigl((-1.5,1.5)\times(0,1)\bigr) \cup \bigl((-0.5,0.5)\times(-2,0)\bigr),
$$ 
respectively. We consider the sources $\boldsymbol{f} = (1,1)^T $ and $ g= 1$. 

The boundary conditions in \eqref{nsstokes0} are prescribed on disjoint portions of $\partial \Omega_L$ and $\partial \Omega_T$ as follows:
\begin{align*}
\eqref{nsstokes0}_1 &:\quad \{ (x,y) \in \partial \Omega_L : x = -1 \text{ or } y = -1 \}, \\
\eqref{nsstokes0}_2 &:\quad \{ (x,y) \in \partial \Omega_L : x = 1 \text{ or } y = 0 \}, \\
\eqref{nsstokes0}_3 &:\quad \{ (x,y) \in \partial \Omega_L : x = 0 \text{ or } y = 1 \},
\\
\eqref{nsstokes0}_1 &:\quad \{ (x,y) \in \partial \Omega_T : x = -1.5 \text{ or } y = 1 \text{ or } x = 1.5\}, \\
\eqref{nsstokes0}_2 &:\quad \{ (x,y) \in \partial \Omega_T : x = 0.5 \text{ or } y = -2 \text{ or } x = -0.5 \}, \\
\eqref{nsstokes0}_3 &:\quad \{ (x,y) \in \partial \Omega_T :  y = 0 \}.
\end{align*}
We choose $\gamma_N=40, \gamma =10, \beta=1, \nu=1, \alpha=0.1, \eta = 0.6$ and $ \kappa=1$. The exact solutions for this problem are not
known. However, we anticipate significant challenges in convergence when using uniform mesh refinement, primarily due to the presence of re-entrant corners, which lead to singularities in the solution \cite{choi2013stationary}. In contrast, our adaptive refinement strategy proves to be much more effective in dealing with these issues. As demonstrated in Fig. \ref{conv4}, the adaptive method focuses on the refinement in the regions surrounding the re-entrant corners. This targeted approach helps to accurately capture the singularities and complex behavior in these areas, which uniform refinement often fails to do efficiently. As we continue refining the mesh adaptively, we observe that the global error estimators decrease optimally. This behavior, depicted in Fig. \ref{con_test3}, validates the efficacy of the adaptive
scheme. Additionally, Figs. \ref{con_test1}, \ref{con_test2} provide detailed visualizations of the numerical solutions. These plots illustrate the improved accuracy and resolution achieved by the adaptive strategy. The finer mesh around the re-entrant corners allows for a more precise approximation of the solution, which is critical for capturing the true nature of the problem.

\begin{figure}[H]
    \centering
    \begin{subfigure}[b]{0.32\textwidth}
        \centering
        \includegraphics[width=\textwidth]{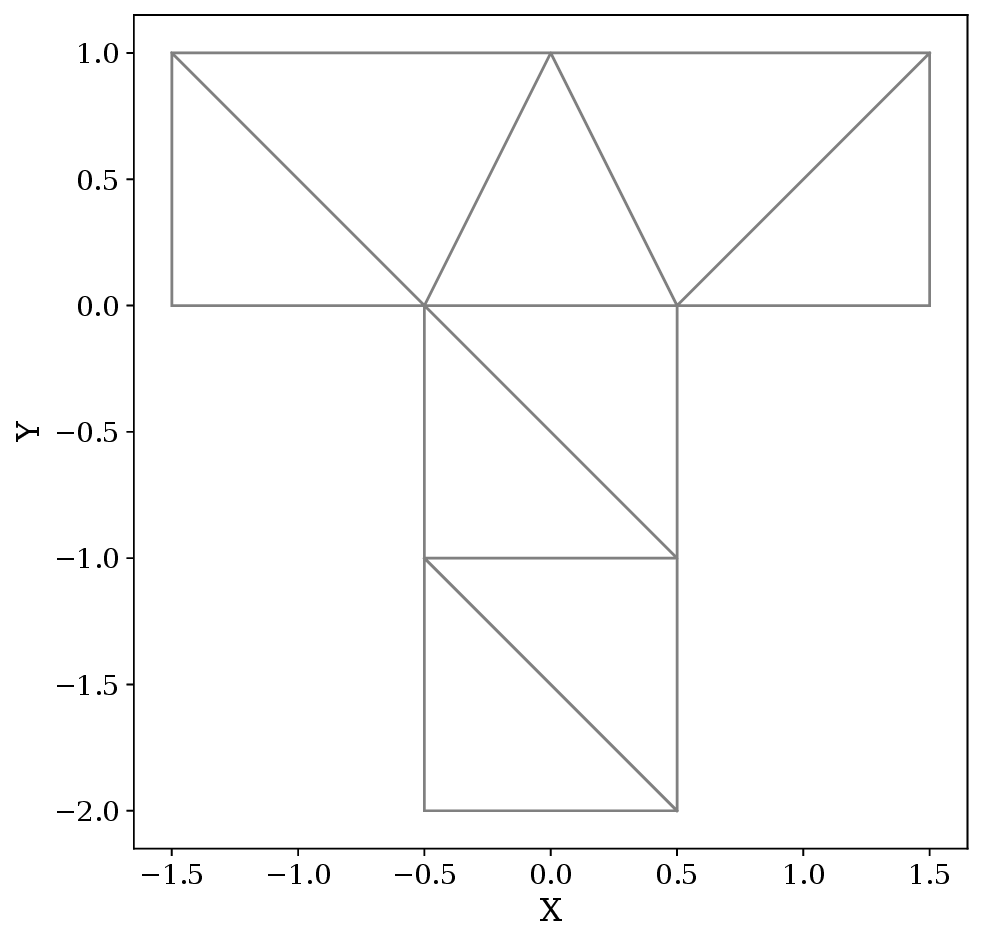}
         \caption{(a) Initial T-shaped mesh}
    \end{subfigure}
    \hfill
    \begin{subfigure}[b]{0.32\textwidth}
        \centering
        \includegraphics[width=\textwidth]{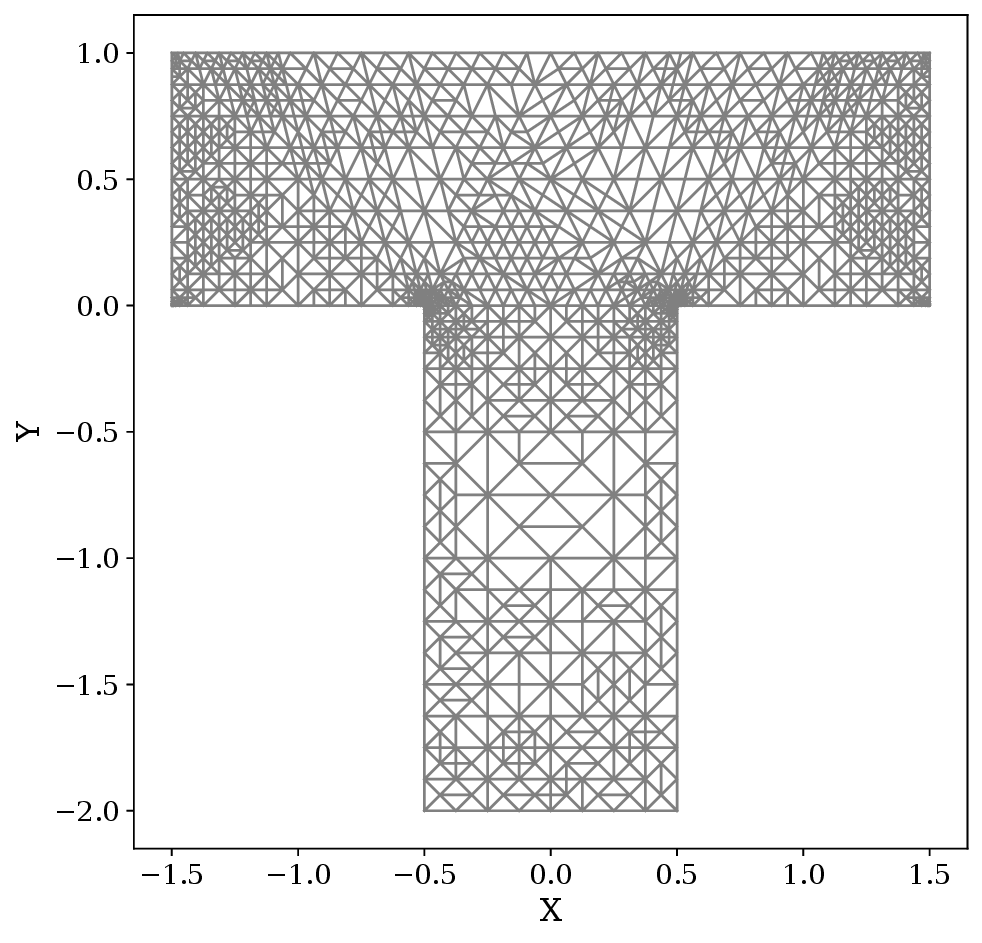}
        \caption{$14420$ DOF}
    \end{subfigure}
    \hfill
    \begin{subfigure}[b]{0.32\textwidth}
        \centering
        \includegraphics[width=\textwidth]{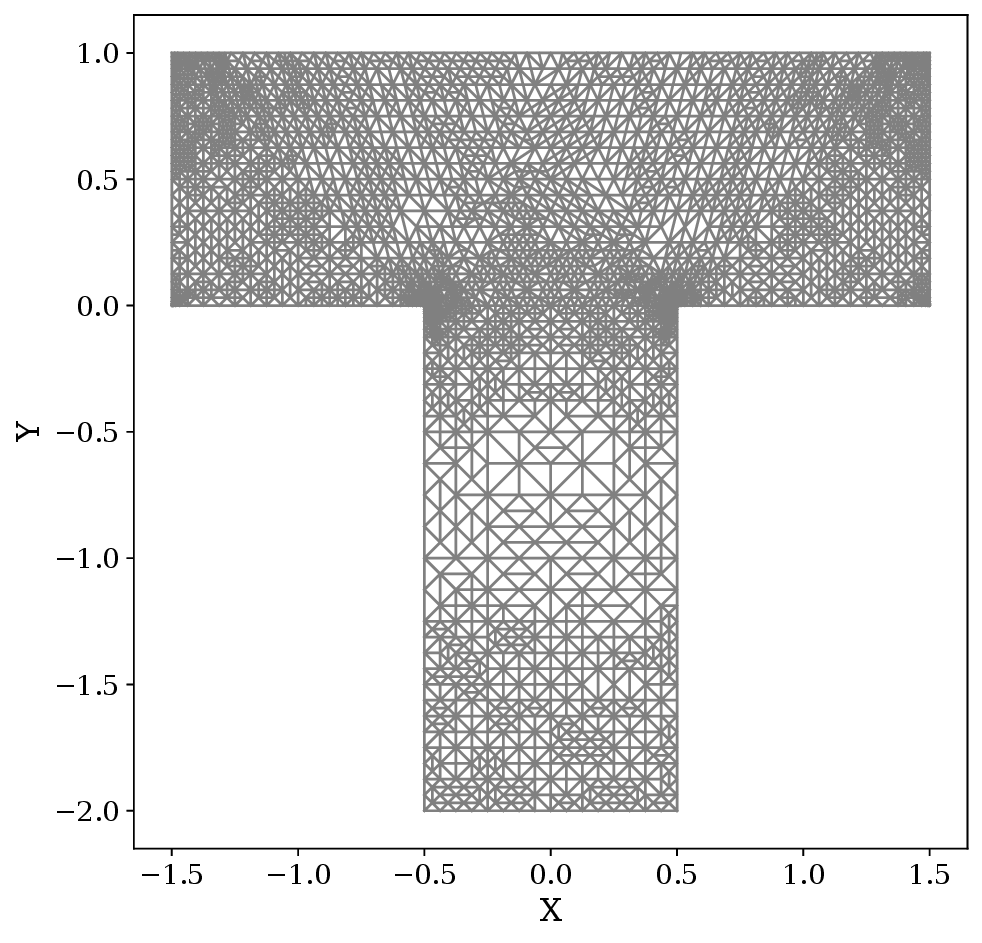}
        \caption{$43371 $ DOF}
    \end{subfigure}
    \begin{subfigure}[b]{0.32\textwidth}
        \centering
        \includegraphics[width=\textwidth]{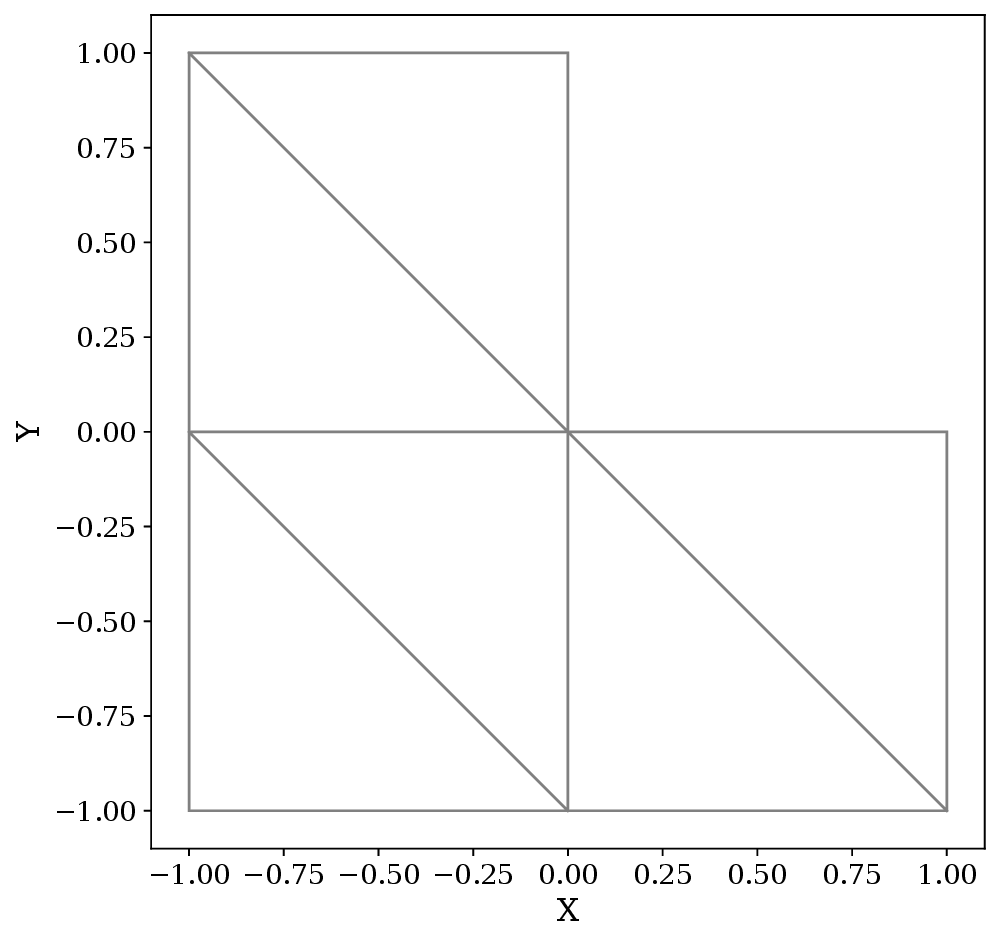}
         \caption{(a) Initial L-shaped mesh}
    \end{subfigure}
    \hfill
    \begin{subfigure}[b]{0.32\textwidth}
        \centering
        \includegraphics[width=\textwidth]{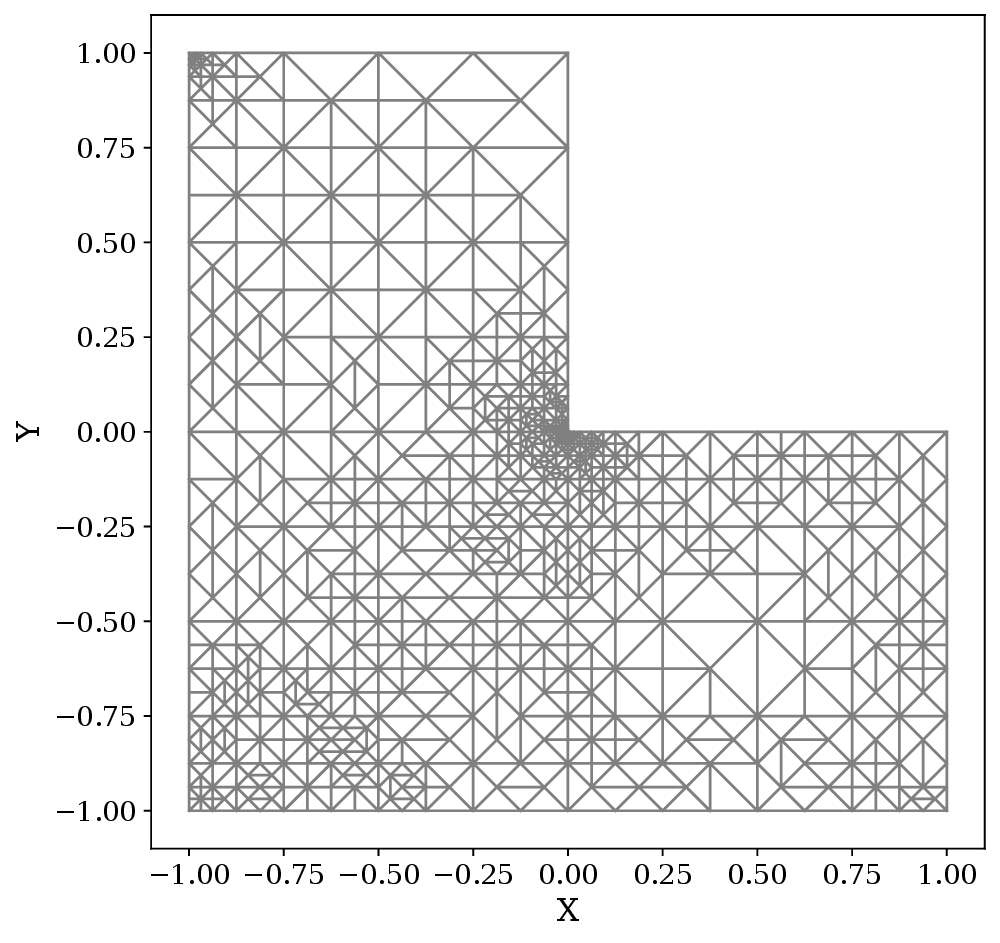}
        \caption{$7382$ DOF}
    \end{subfigure}
    \hfill
    \begin{subfigure}[b]{0.32\textwidth}
        \centering
        \includegraphics[width=\textwidth]{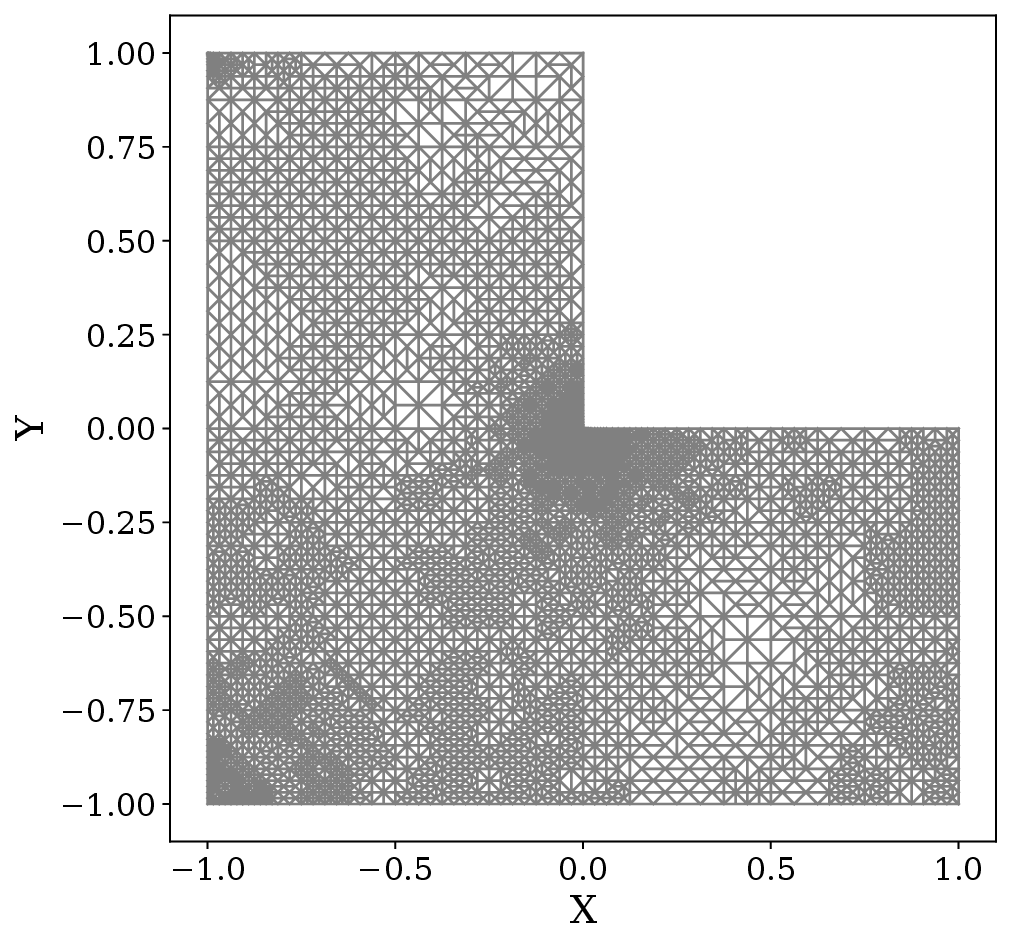}
        \caption{$55351 $ DOF}
    \end{subfigure}
    \caption{Test 3: Initial and adaptively refined meshes showing refinement near re-entrant corners.}
    \label{conv4}
 \end{figure}
\begin{figure}[H]
	\centering
	\begin{subfigure}[b]{0.3\textwidth}
		\centering
		\includegraphics[width=\textwidth]{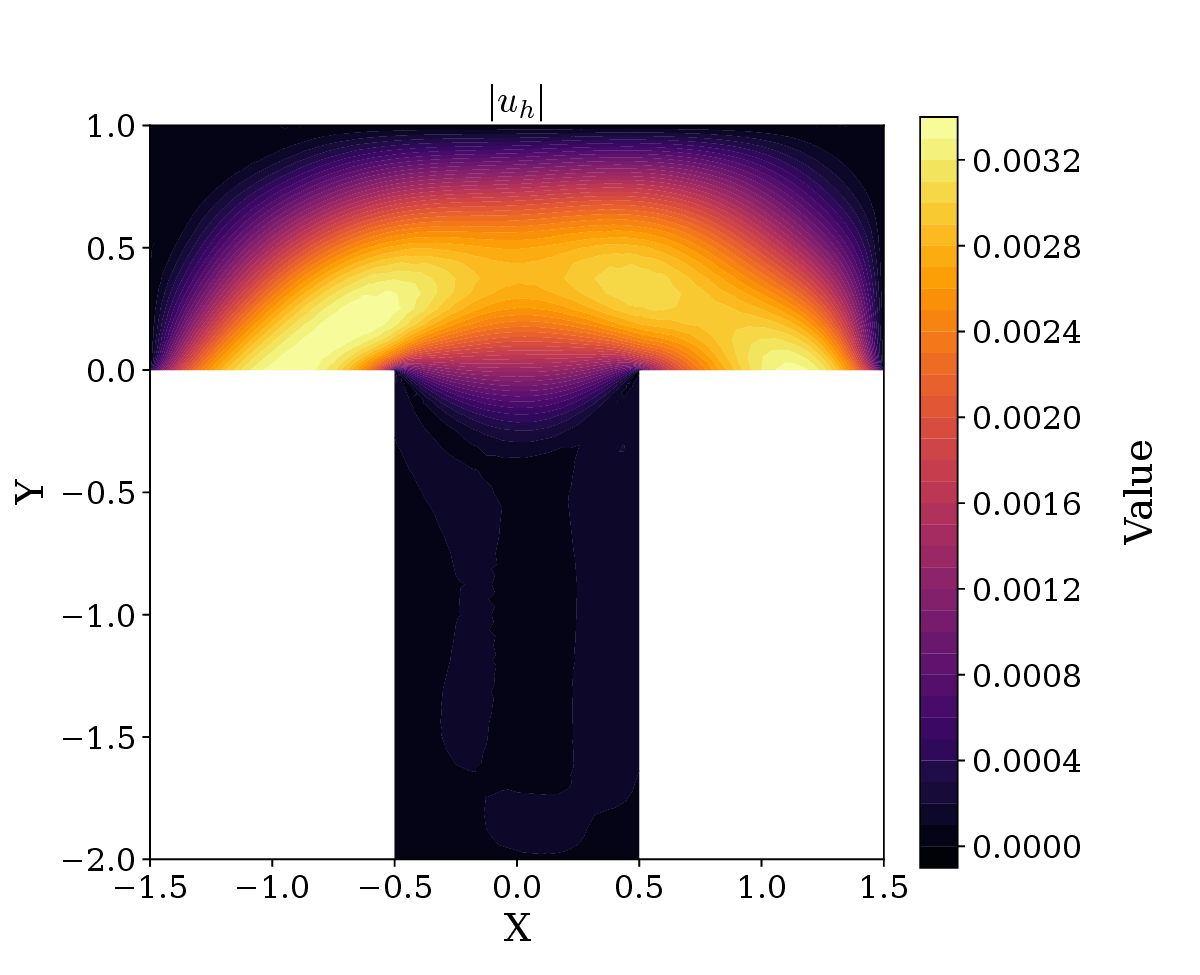}
		\caption{$| \boldsymbol{u}_h |$}
	\end{subfigure}
	\hspace{0.5cm} 
	\begin{subfigure}[b]{0.3\textwidth}
		\centering
		\includegraphics[width=\textwidth]{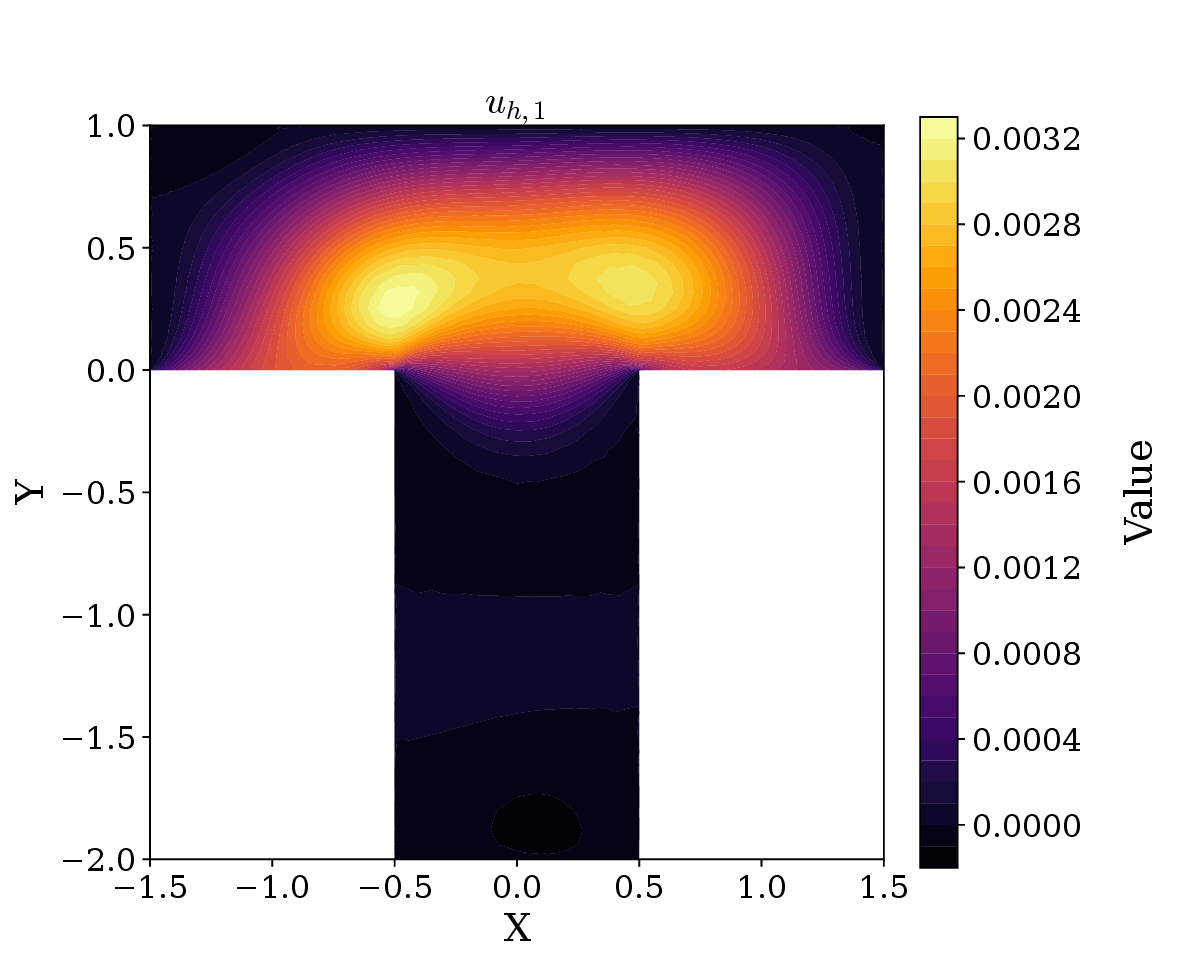}
		\caption{$\boldsymbol{u}_{h1} $}
	\end{subfigure}
    \hspace{0.5cm} 
	\begin{subfigure}[b]{0.3\textwidth}
		\centering
		\includegraphics[width=\textwidth]{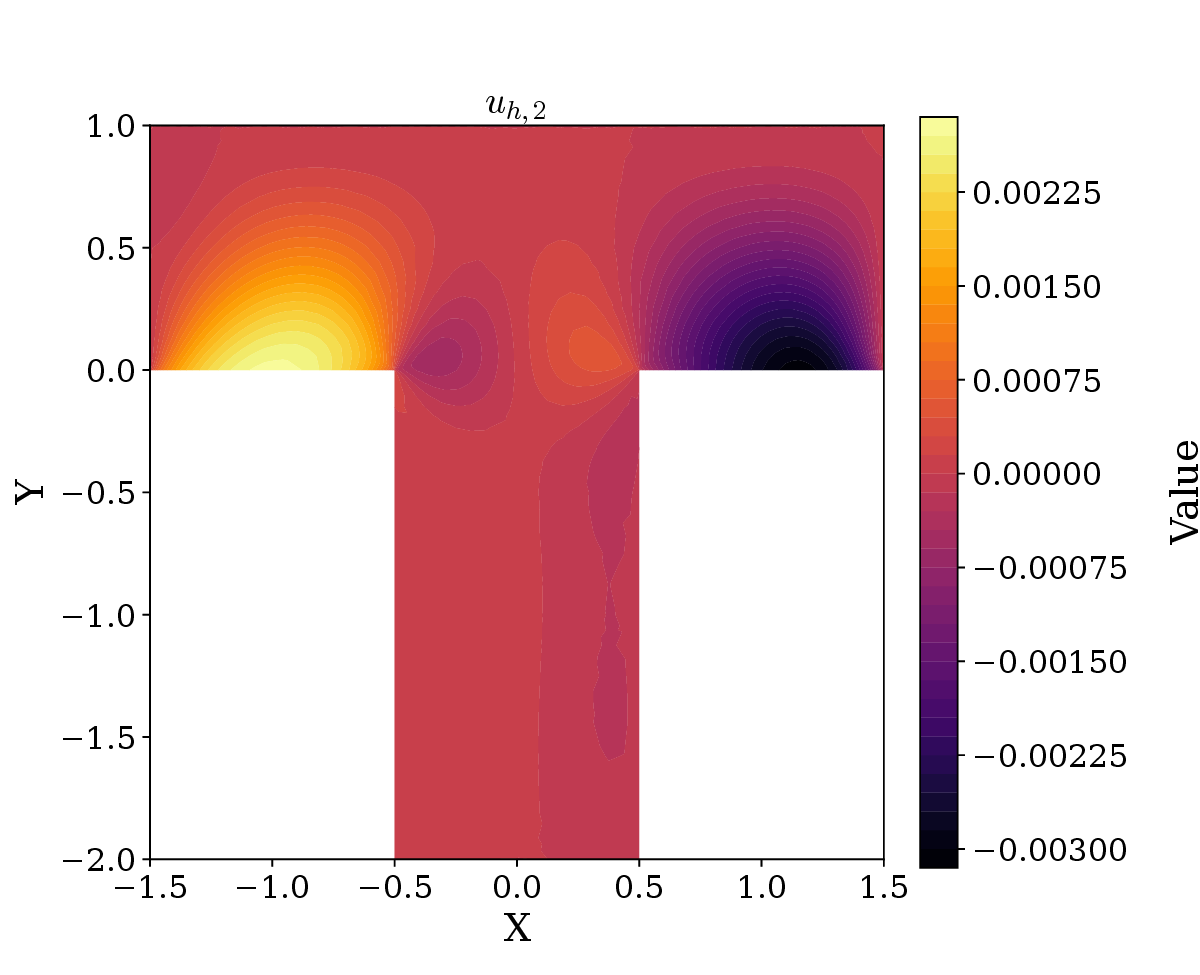}
		\caption{$\boldsymbol{u}_{h2}$}
	\end{subfigure} 
    \hspace{0.5cm} 
	\begin{subfigure}[b]{0.3\textwidth}
		\centering
		\includegraphics[width=\textwidth]{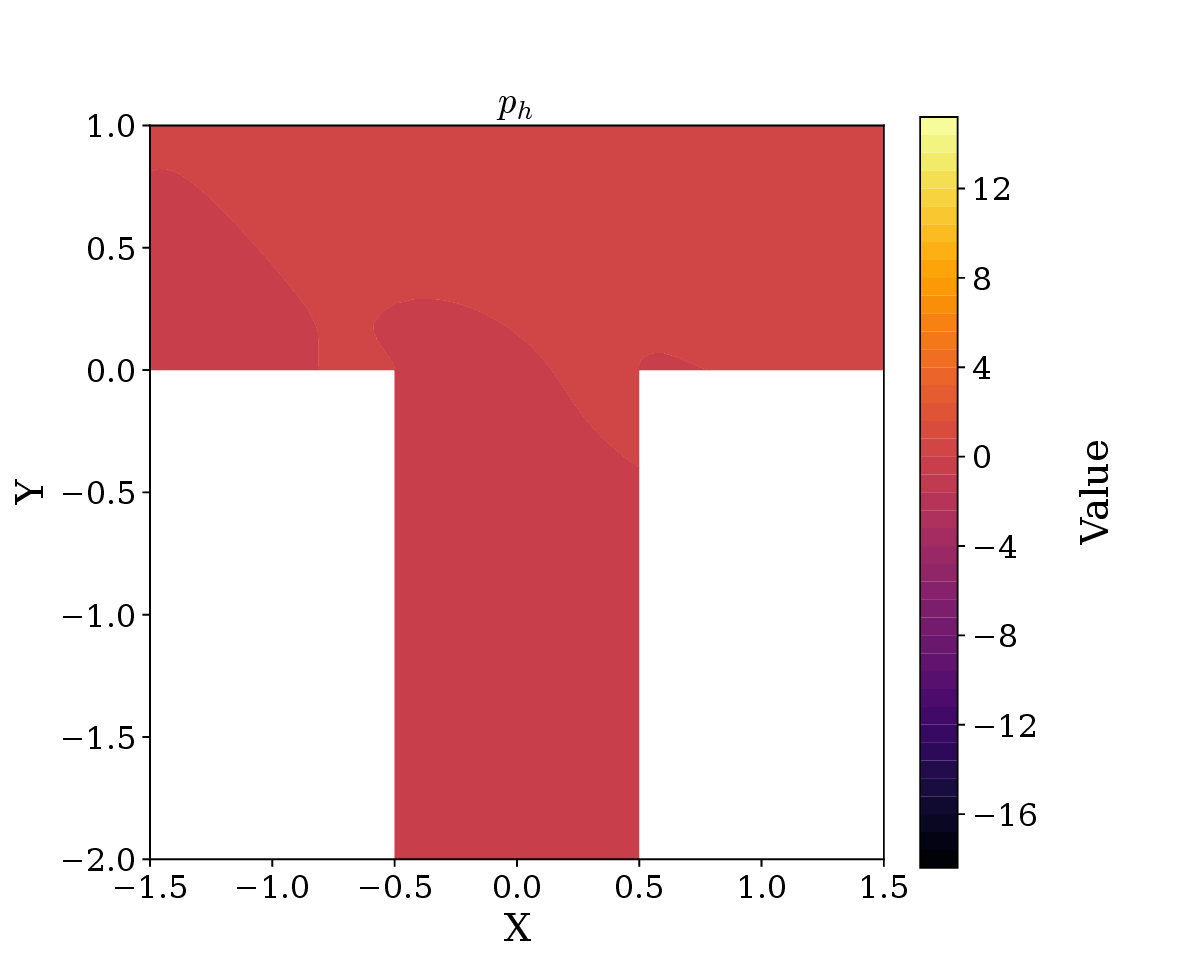}
		\caption{$p_h$}
	\end{subfigure}
    \hspace{0.5cm} 
	\begin{subfigure}[b]{0.3\textwidth}
		\centering
		\includegraphics[width=\textwidth]{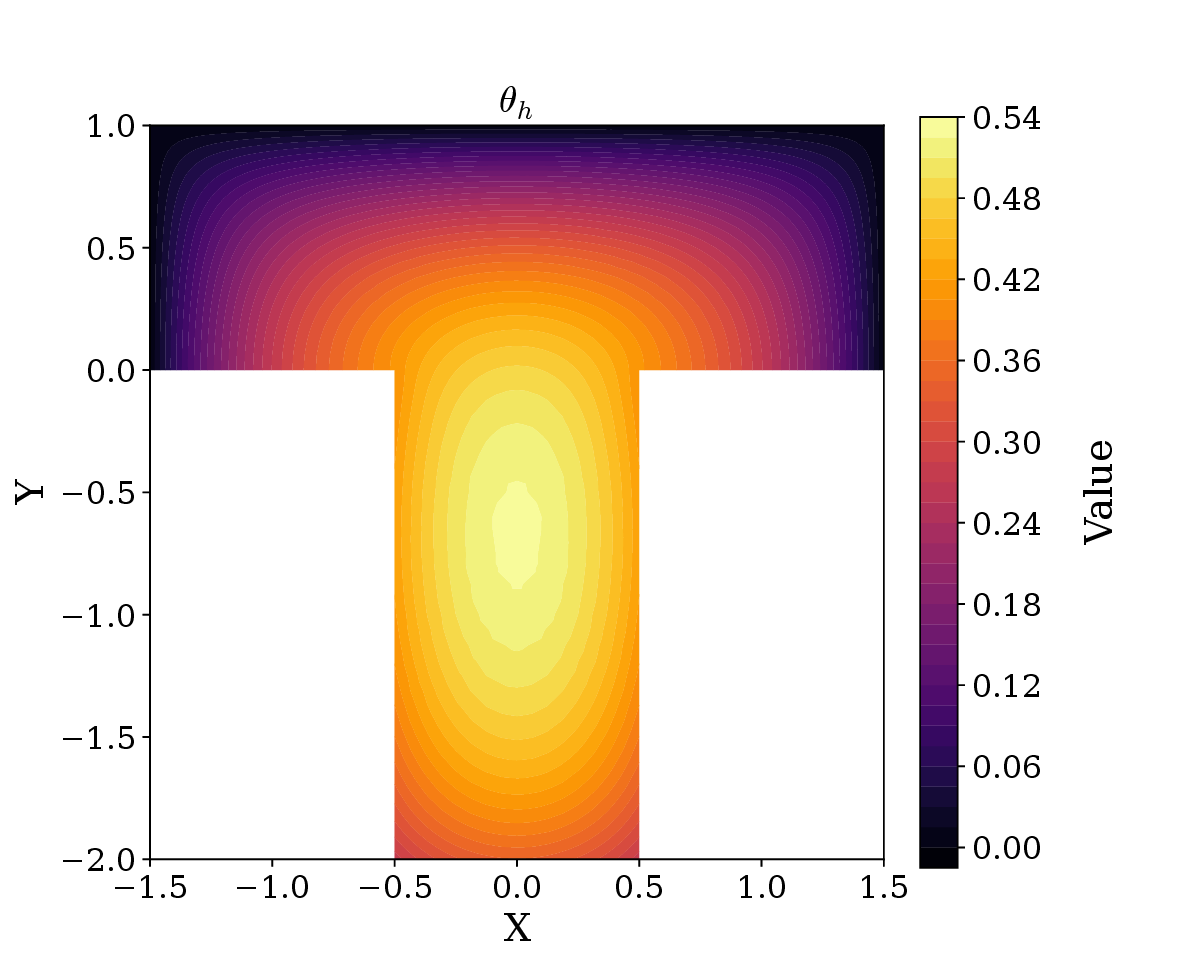}
		\caption{$\theta_h$}
	\end{subfigure}
	\caption{Test 3: Plots of numerical solutions of the velocity magnitude $(| \boldsymbol{u}_h |)$, velocity components $(\boldsymbol{u}_{h1}, \boldsymbol{u}_{h2})$, pressure $(p_h)$ and temperature $(\theta_h)$, respectively, for the T-shape domain.}
	\label{con_test1}
\end{figure}
\begin{figure}[H]
	\centering
	\begin{subfigure}{0.3\textwidth}
		\centering
		\includegraphics[width=\textwidth]{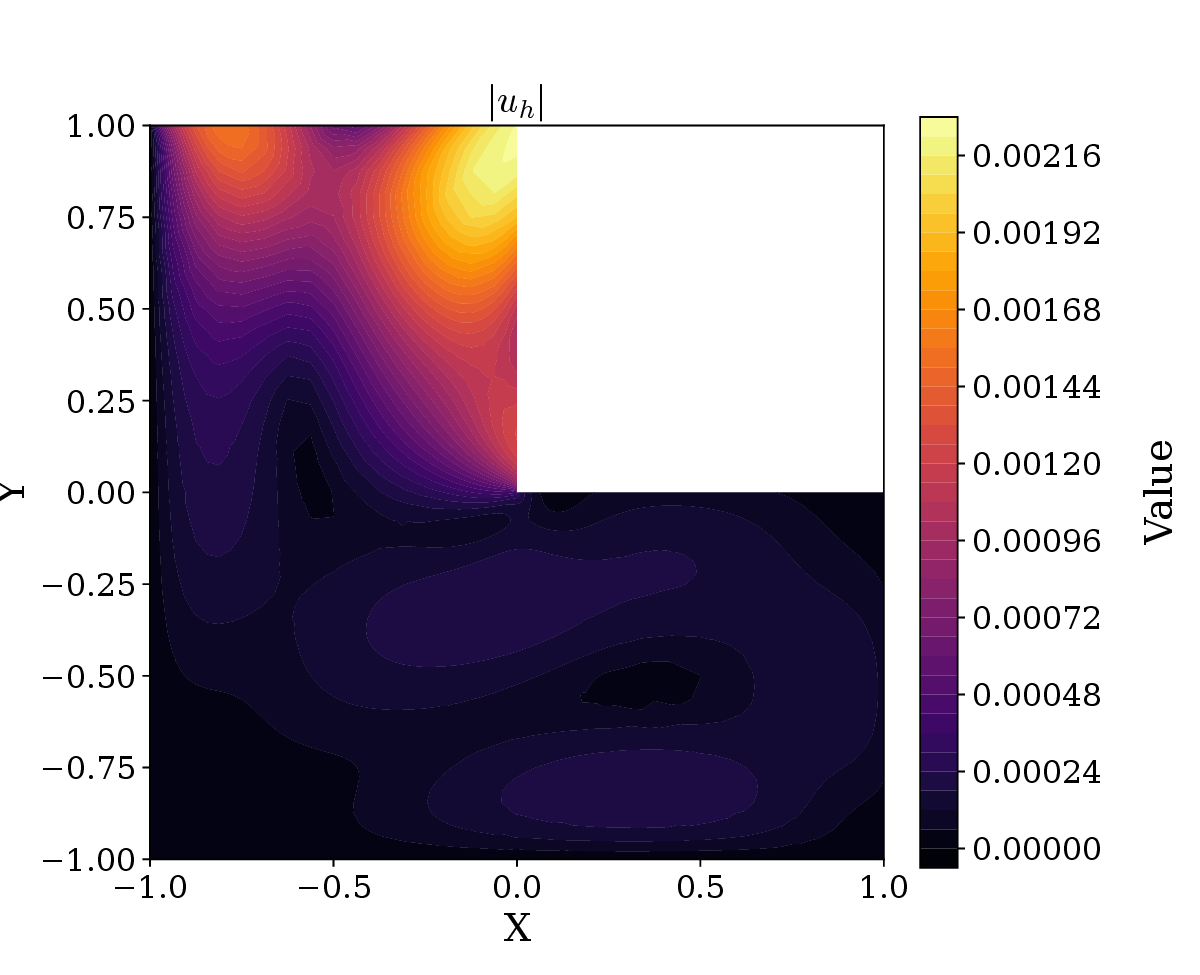}
		\caption{$| \boldsymbol{u}_h |$}
	\end{subfigure}
	\hspace{0.5cm} 
	\begin{subfigure}{0.3\textwidth}
		\centering
		\includegraphics[width=\textwidth]{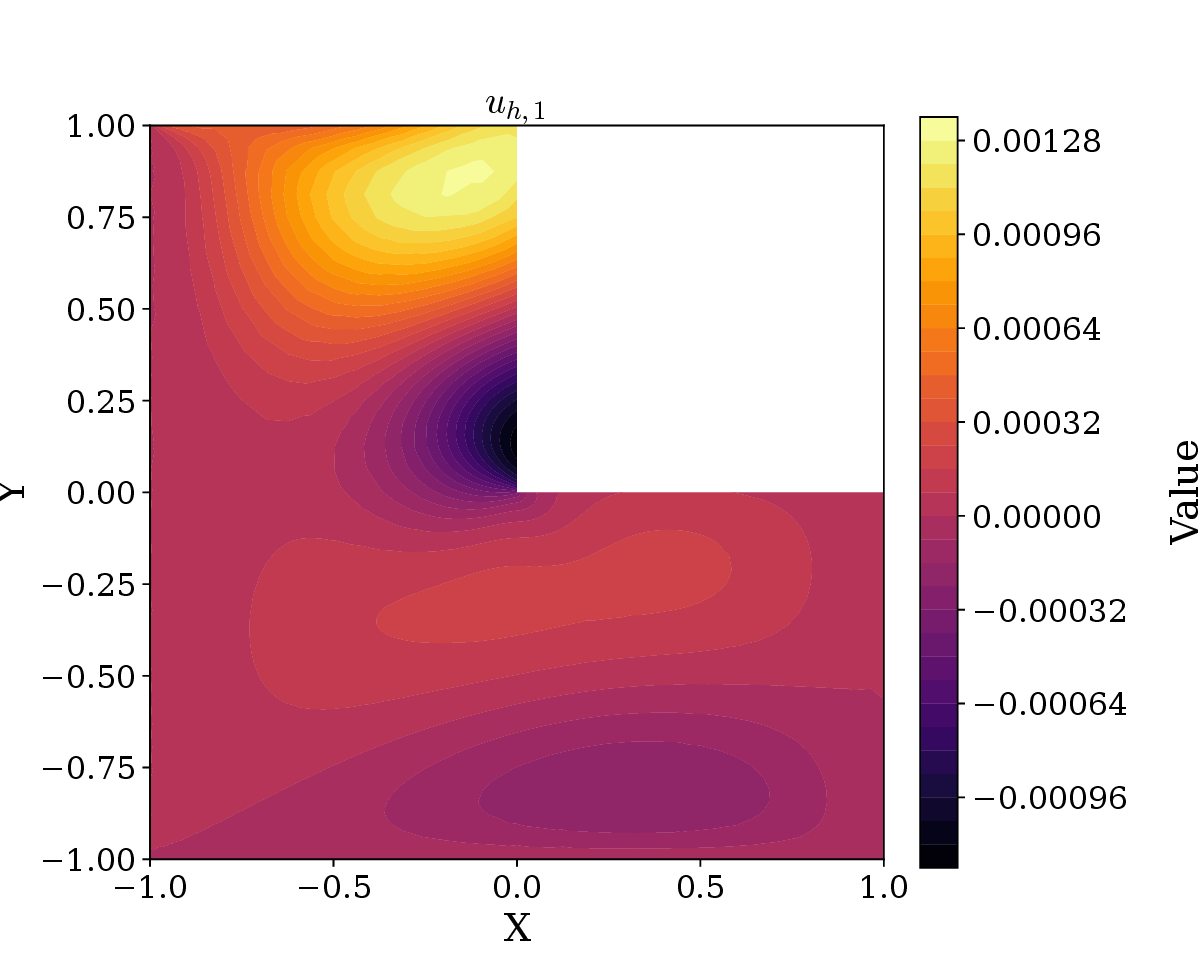}
		\caption{$\boldsymbol{u}_{h1} $}
	\end{subfigure}
    \hspace{0.5cm} 
	\begin{subfigure}{0.3\textwidth}
		\centering
		\includegraphics[width=\textwidth]{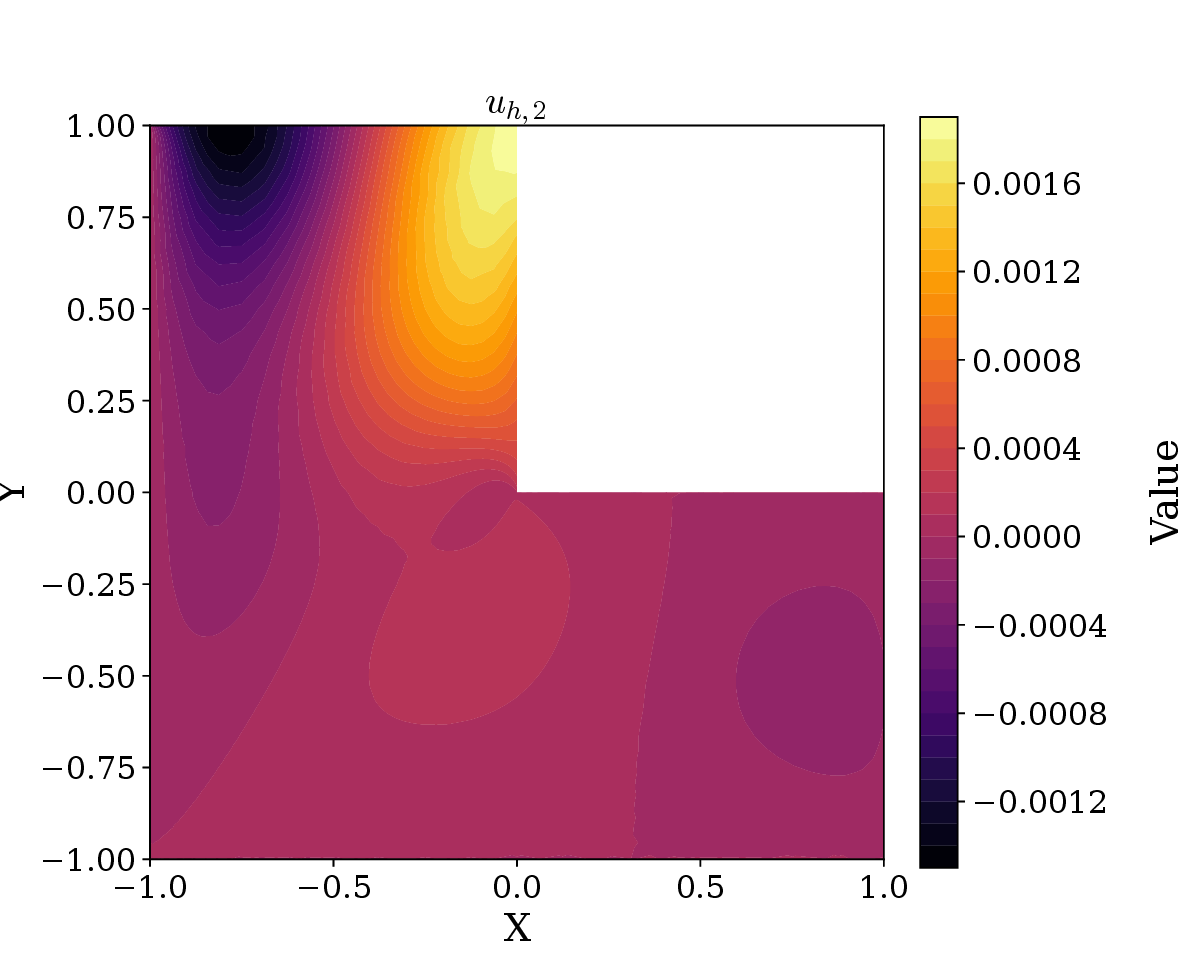}
		\caption{$\boldsymbol{u}_{h2}$}
	\end{subfigure} 
    \hspace{0.5cm} 
	\begin{subfigure}{0.3\textwidth}
		\centering
		\includegraphics[width=\textwidth]{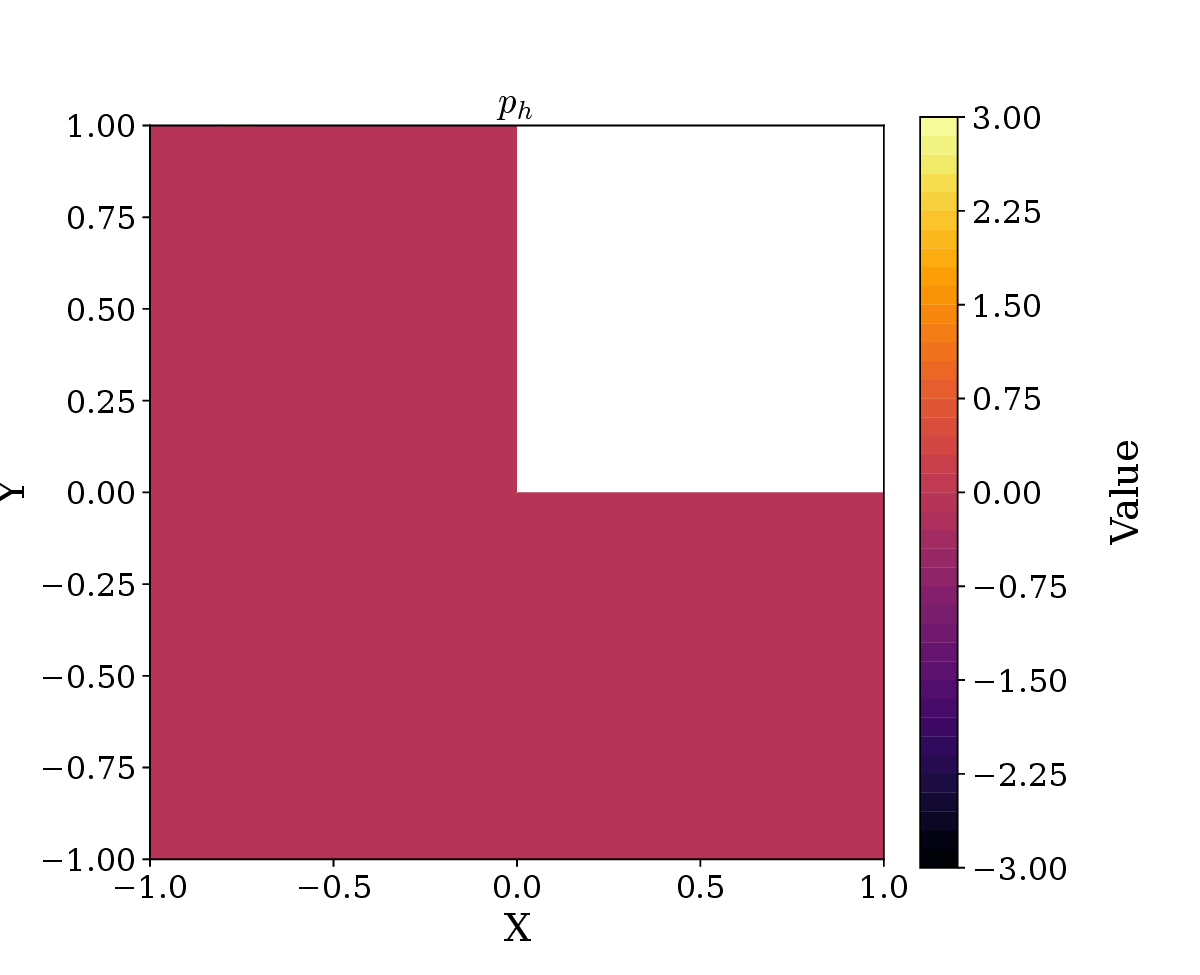}
		\caption{$p_h$}
	\end{subfigure}
    \hspace{0.5cm} 
	\begin{subfigure}{0.3\textwidth}
		\centering
		\includegraphics[width=\textwidth]{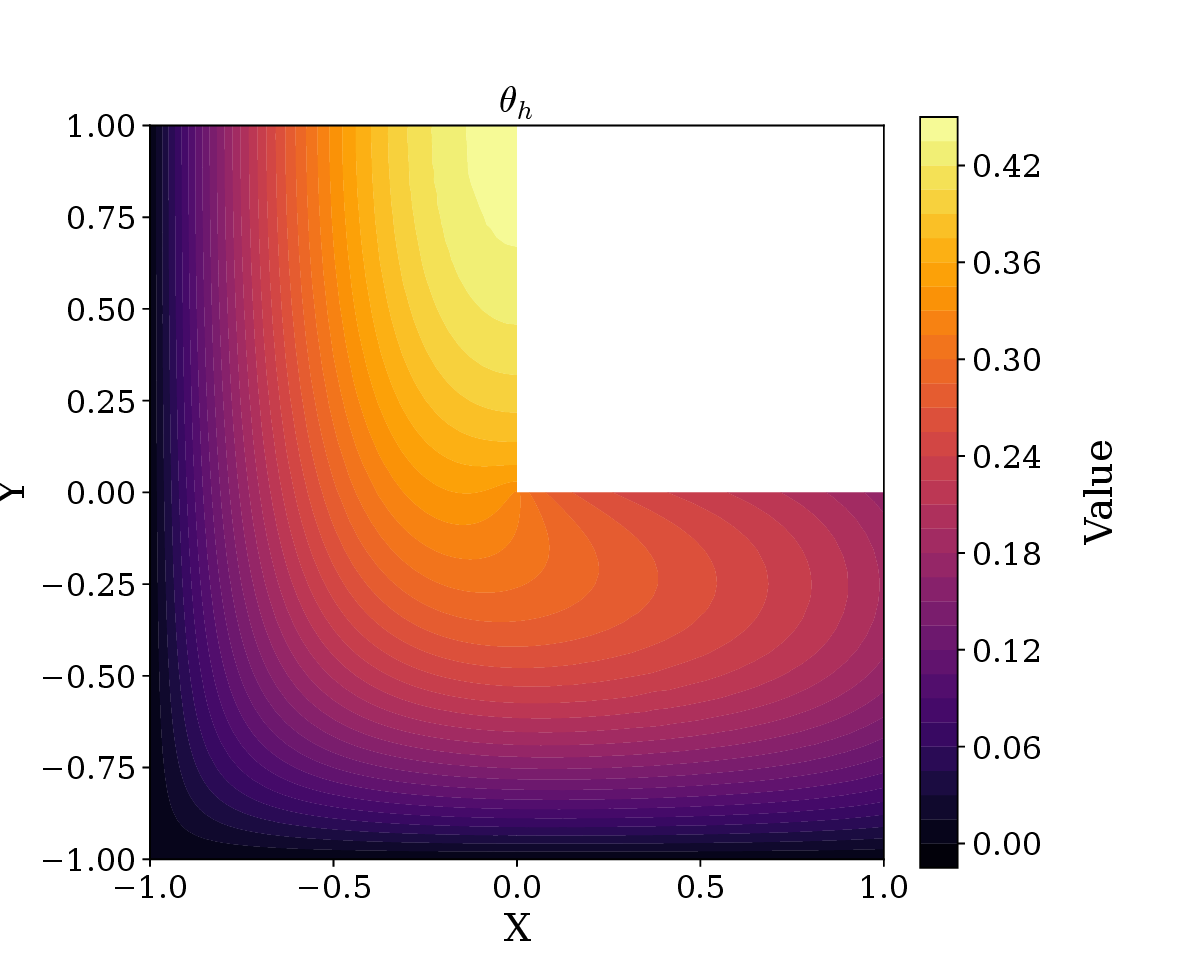}
		\caption{$\theta_h$}
	\end{subfigure}
    \caption{Test 3: Plots of numerical solutions of the velocity magnitude $(| \boldsymbol{u}_h |)$, velocity components $(\boldsymbol{u}_{h1}, \boldsymbol{u}_{h2})$, pressure $(p_h)$ and temperature $(\theta_h)$, respectively, for the L-shape domain.}
	\label{con_test2}
\end{figure}
\begin{figure}[H]
	\centering
	\begin{subfigure}{0.4\textwidth}
		\centering
		\includegraphics[width=\textwidth]{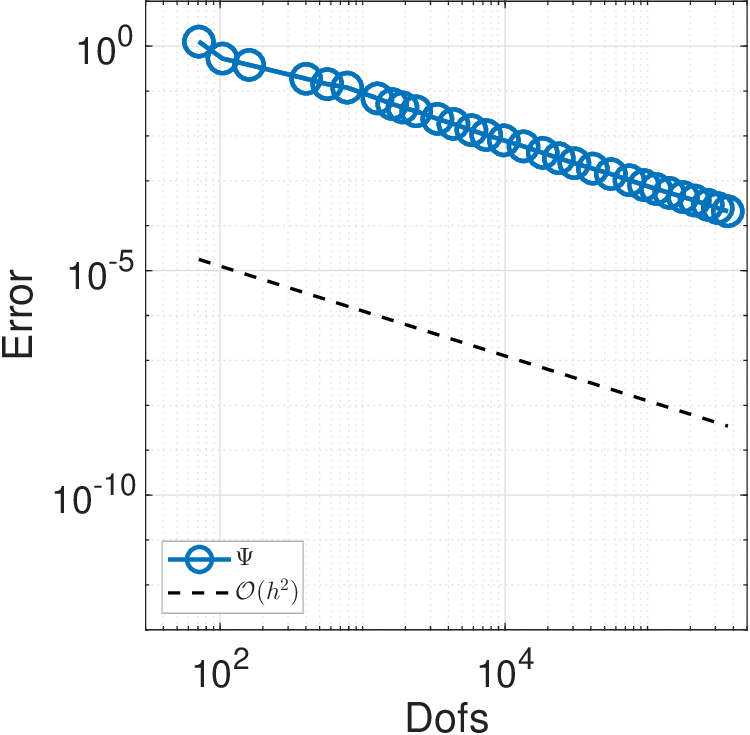}
		\caption{L-shape}
	\end{subfigure}
    \hspace{0.5cm} 
	\begin{subfigure}{0.4\textwidth}
		\centering
		\includegraphics[width=\textwidth]{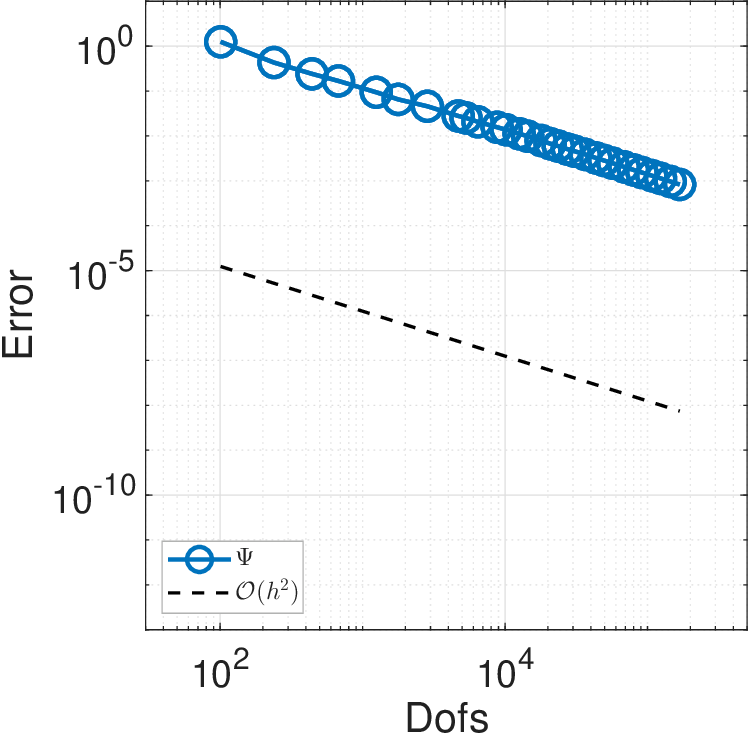}
		\caption{T-shape}
	\end{subfigure}
    \caption{Test 3: Global indicators for (a) L-shape (b) T-shape.}
	\label{con_test3}
\end{figure}

\subsection{Test 4: Flow past through a circular cylinder}
The geometrical setting of the domain is taken from \cite{MR4744101, Bayraktar2012}. 
The domain $\Omega$ is defined as the region $]0,2.5[ \times ]0,H[ \times ]0,H[$, with $H = 0.41\,\mathrm{m}$, excluding a cylinder of diameter $D = 0.1\,\mathrm{m}$. In this problem, we impose $\eqref{nsstokes0}_1$ on the inlet with
\[
\boldsymbol{u}_\star := \left(\frac{16U\, y z (H - y)(H - z)}{H^4},\, 0,\, 0\right)^T,
\]
where $U = 0.45\,\mathrm{m/s}$ and $\theta_\star = 1$. 
We impose $\eqref{nsstokes0}_2$ on the obstacle and $\eqref{nsstokes0}_3$ on the outlet, with
$\psi(\boldsymbol{u} \cdot \boldsymbol{n}) = \frac{1}{2}\big(\boldsymbol{u} \cdot \boldsymbol{n} + |\boldsymbol{u} \cdot \boldsymbol{n}|\big).$ On the lateral walls, we impose no-slip boundary conditions for the fluid velocity and do-nothing boundary conditions for the temperature. 
The parameters are chosen as
$\gamma = 0.01, \gamma_N = 100,  \beta = 0.01, \nu = 1,  \alpha = 1, \kappa = 1, \eta = 0.01.$ The right-hand side of the momentum equation is $\boldsymbol{f} = (0,-1,0)$, and $g = 0$. The numerical solutions for the velocity, pressure, and temperature fields are shown in Figures~~\ref{velocity_mag_FPCC}, \ref{velocity_FPCC}, \ref{pressure_FPCC}, and \ref{temp_FPCC}. 
The results illustrate the good performance of the method on this more complex example.
\begin{figure}[H]
    \centering
\begin{subfigure}[b]{0.7\textwidth}
        \centering
        \includegraphics[width=\textwidth]{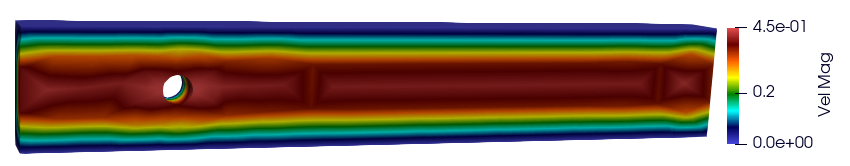}
        \caption{Velocity Magnitude}
        \label{velocity_mag_FPCC}
    \end{subfigure}
    \begin{subfigure}[b]{0.7\textwidth}
        \centering
        \includegraphics[width=\textwidth]{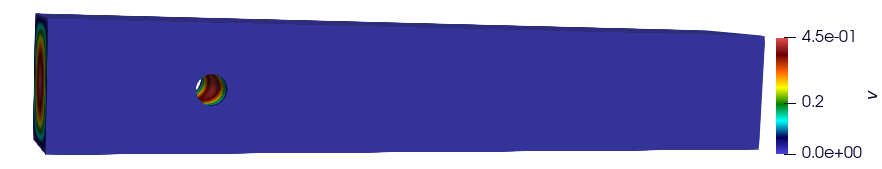}
        \caption{Velocity}
        \label{velocity_FPCC}
    \end{subfigure}

    \begin{subfigure}[b]{0.7\textwidth}
        \centering
        \includegraphics[width=\textwidth]{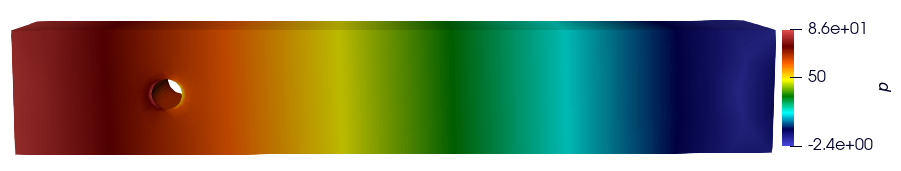}
        \caption{Pressure isovalues}
        \label{pressure_FPCC}
    \end{subfigure}

    \begin{subfigure}[b]{0.7\textwidth}
        \centering
        \includegraphics[width=\textwidth]{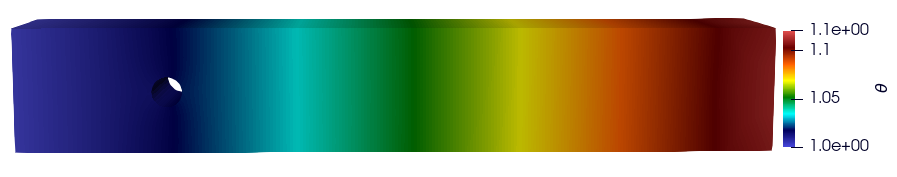}
        \caption{Temperature isovalues}
        \label{temp_FPCC}
    \end{subfigure}

    \caption{Test 4: Velocity Magnitude, Velocity, pressure isovalues, and temperature isovalues, respectively.}
    \label{test4}
\end{figure}

\noindent
{\bf Acknowledgements.} NAB is supported by Centro de Modelamiento Matemático (CMM), Proyecto Basal FB210005, and by the \textit{Chilean National Agency for Research and Development} (ANID Postdoctoral), Proyecto 3230326. GS is supported by ANID through the \textit{Fondecyt Iniciación} grant 11250322.
\par\smallskip
\noindent
{\bf Data availability.} Data generated during the research discussed in the paper will be made available upon reasonable request.
\par\smallskip
\noindent
{\bf Conflict of interest statement}.  The Authors declare that they have no conflict of interest.

\bibliography{references}  
\bibliographystyle{alpha}
\end{document}